\title{Square Sierpi\'nski carpets and  Latt\`es maps}%
\author{Mario Bonk}
\author{Sergei Merenkov}
\address{Department of Mathematics\\
University of California\\ Los Angeles \\CA 90095\\USA} \email{mbonk@math.ucla.edu}
\thanks{M.B.\ was supported by NSF grant DMS-1506099.}
\address{Department of Mathematics\\ City College of
New York and CUNY Graduate Center\\ New York\\ NY 10031\\USA} \email{smerenkov@ccny.cuny.edu}
\date{January 28, 2018}
\newcommand\C{{\mathbb C}}
\newcommand\N{{\mathbb N}}
\newcommand\R{{\mathbb R}}
\newcommand\Ch{{\widehat {\mathbb C}}}
\newcommand\Sph{{\widehat {\mathbb C}}}
\newcommand\dist{\operatorname{dist}}
\newcommand\diam{\operatorname{diam}}
\newcommand\inte{\operatorname{int}}
\renewcommand\:{\colon}
\newcommand\sub {\subseteq}
\newcommand\ra {\rightarrow}
\def\length{\mathop{\mathrm{length}}}
\newcommand\Om{\Omega}
\newcommand\ga{\gamma}
\newcommand\eps{\epsilon}
\numberwithin{equation}{section}
\newtheorem{theorem}{Theorem}[section]
\newtheorem{proposition}[theorem]{Proposition}
\newtheorem{corollary}[theorem]{Corollary}
\newtheorem{lemma}[theorem]{Lemma}
\theoremstyle{definition}
\theoremstyle{remark}
\begin{document}


\abstract{We prove that every quasisymmetric homeomorphism of a standard square Sierpi\'nski carpet 
$S_p$, $p\ge 3$ odd, is an isometry. This strengthens and completes earlier work by the authors~\cite[Theo\-rem~1.2]{BM}. We also show that a similar  conclusion holds for quasisymmetries of the double of $S_p$ across the outer peripheral circle. Finally, as an application of the techniques developed in this paper, we prove that no standard square carpet $S_p$ is quasisymmetrically equivalent to the Julia set of a postcritically-finite rational map.}
\endabstract

\maketitle

\section{Introduction}\label{s:Intro}

The {standard square Sierpi\'nski carpet} $S_p$ is constructed as follows. We fix an odd integer  $p\ge3$. We start with the  closed unit square $Q=[0,1]^2$  in the plane $\R^2$  and subdivide it  into $p\times p$ subsquares of sidelength  $1/p$. Next, we remove the interior of the middle subsquare of this subdivision. Note that this middle subsquare is well defined since $p$ is odd. After this we repeat these two operations (i.e., subdividing and removing the middle subsquare) indefinitely on the remaining subsquares. We equip the residual set of this construction  with the Euclidean metric and call  it  the \emph{standard square Sierpi\'nski} $p$\emph{-carpet}
and denote it  by $S_p$. The sets $S_p$ are all homeomorphic to each other. In general, we call a metrizable topological space $Z$ a {\em Sierpi\'nski carpet}
if $Z$ is homeomorphic to $S_3$.

\begin{figure}
[htbp]
\includegraphics[height=35mm]{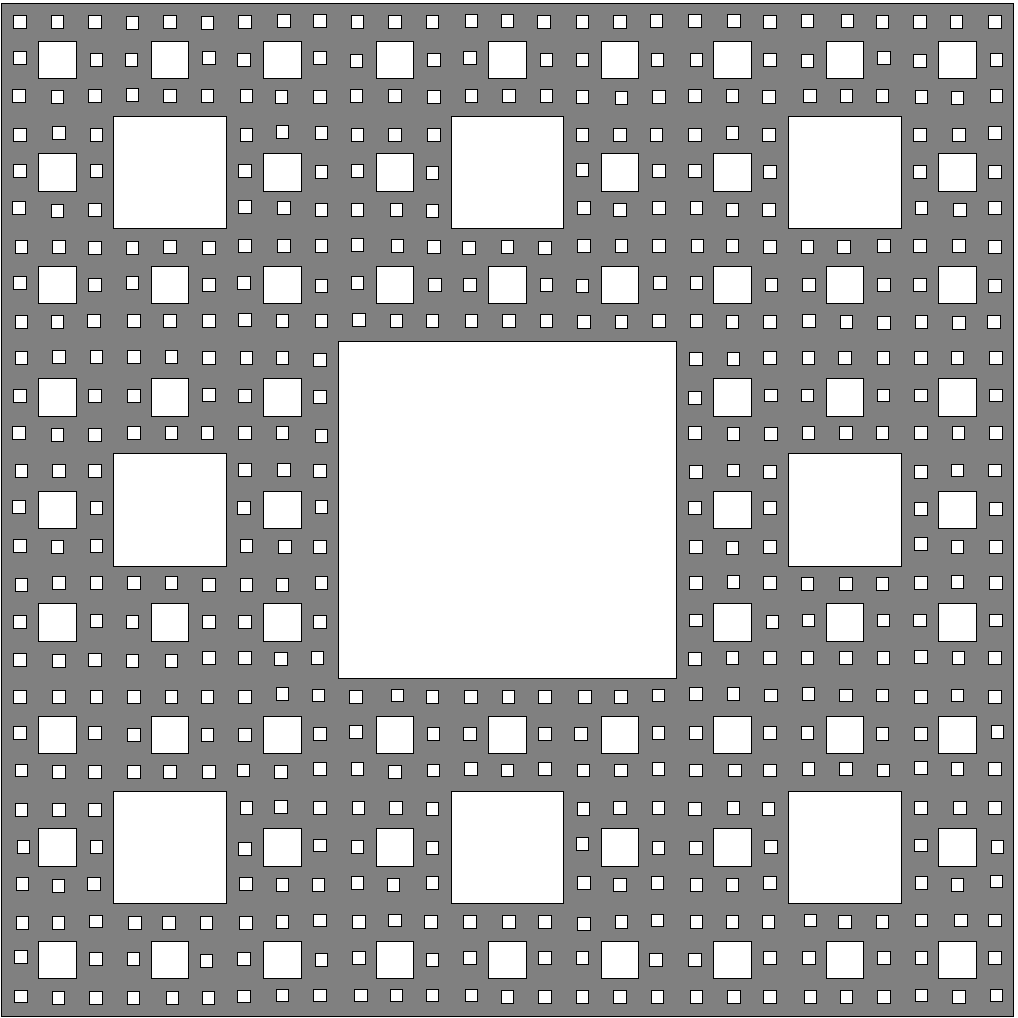}\caption{The standard square Sierpi\'nski 3-carpet $S_3$.}\label{f:ser}
\end{figure}

The boundary of $Q$ and the boundaries of all the  squares that were removed from $Q$ in the construction of $S_p$ are the so-called \emph{peripheral circles} of $S_p$. A Jordan curve $J\sub S_p$ is a peripheral circle if and only if its removal from $S_p$ does not separate $S_p$. The boundary $\partial Q$ of $Q$ is called the \emph{outer} peripheral circle of $S_p$. We denote it by $O$.

A homeomorphism $f\: X\to Y$ between metric spaces $(X,d_X)$ and $(Y, d_Y)$ is said to be \emph{quasisymmetric} or a \emph{quasisymmetry},  if there exists a homeomorphism $\eta\:[0,\infty)\to[0,\infty)$ such that
$$
\frac{d_Y(f(x),f(y))}{d_Y(f(x),f(z))}\le\eta\left(\frac{d_X(x,y)}{d_X(x,z)}\right)
$$
for all distinct points $x,y,z\in X$. If we want to emphasize a {distortion function} $\eta$, we say that $f$ is $\eta$-\emph{quasisymmetric}. 

The class of quasisymmetries  contains all bi-Lipschitz maps. The 
 composition of two quasisymmetries (when defined) and the inverse 
 of a quasisymmetry are quasisymmetric. 
So if we call two metric spaces $X$ and $Y$
{\em quasisymmetrically equivalent} if there exists a quasisymmetry
$f\:X\ra Y$, then we have  a notion of equivalence for metric spaces. 
 
 The question of when two metric spaces are quasisymmetrically equivalent has drawn much attention in recent years. This is motivated by questions in  geometric group theory, for example, such as 
 Cannon's conjecture or   the Kapovich--Kleiner conjecture which  can be reduced to quasisymmetric equivalence problems (see \cite{Bo1} for a survey of this topic). 
 
The main result of this paper  is the following statement. 

\begin{theorem}\label{thm:standard}  Every quasisymmetry  $\xi\: S_p\ra S_p$, $p\ge 3$ odd, is  an isometry. 
\end{theorem}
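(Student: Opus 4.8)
The plan is to split the argument into a reduction that is essentially contained in \cite{BM} and the main new ingredient, namely showing that $\xi$ cannot move the outer peripheral circle. Since $\xi$ is a homeomorphism and, as recalled above, a Jordan curve in $S_p$ is a peripheral circle exactly when its removal does not disconnect $S_p$, the map $\xi$ carries the family of peripheral circles of $S_p$ onto itself. By \cite[Theorem~1.2]{BM} (more precisely, by the argument that proves it), every quasisymmetry of $S_p$ that maps the outer peripheral circle $O$ onto itself is a Euclidean isometry; hence it is enough to prove that $\xi(O)=O$.

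To do this I would use the carpet modulus $\Mod_S\Gamma$ of a curve family $\Gamma$ in a carpet $S$, obtained by minimizing $\sum_C\rho(C)^2$ over all weight functions $\rho\ge0$ on the peripheral circles $C$ of $S$ that are admissible for $\Gamma$ (that is, $\sum_{C\cap\gamma\ne\emptyset}\rho(C)\ge1$ for every $\gamma\in\Gamma$); the decisive feature is that a quasisymmetry of $S_p$ distorts carpet moduli by at most a multiplicative constant depending only on its distortion function. The strategy is to attach to each peripheral circle $C$ the carpet modulus of a curve family that is canonically --- hence quasisymmetrically --- associated with $C$ (morally, the Jordan curves in $S_p$ that separate the complementary domain bounded by $C$ from all the other complementary domains of $S_p$), and to show that this invariant singles out $O$. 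Self-similarity of the construction makes the invariant take a common value on every inner peripheral circle, since each inner peripheral circle bounds the middle hole of a scaled copy of $S_p$; the outer circle $O$ is the middle hole of no such copy, and the real task is to exclude the possibility that some inner peripheral circle imitates its configuration and reproduces the same value. It is here that I would pass to the double $\widehat{S}_p$ of $S_p$ across $O$: doubling turns $O$ into an ordinary Jordan curve sitting symmetrically between two copies of $S_p$, the reflection symmetry of $\widehat{S}_p$ forces the relevant curve families to be self-similar, and the carpet moduli in question become computable in terms of the flat, crystallographic combinatorics of the square grid --- the structure that also ties the square carpet to Lattès maps.

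The step I expect to be the main obstacle is precisely this comparison. One needs carpet-modulus estimates that are at once robust --- genuine quasisymmetric invariants, so that they survive the map $\xi$, about which nothing is assumed beyond its distortion function --- and sharp enough to separate $O$ from \emph{every} inner peripheral circle, and especially from the large central holes that appear at each level of the construction, which are the peripheral circles most closely resembling $O$. Making such estimates effective for all odd $p\ge3$ (rather than only for $p=3$, as in \cite{BM}) is, I expect, exactly what forces the detour through $\widehat{S}_p$, where the absence of a distinguished outer circle and the extra reflection symmetry reduce the needed estimates to computations governed by the flat, Lattès-type structure underlying $S_p$.
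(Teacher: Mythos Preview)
Your reduction has the logic reversed. The fact that $\xi(O)=O$ is \emph{already known} from \cite{BM}: this is precisely \cite[Lemma~8.1]{BM}, quoted here as Theorem~\ref{thm:distPair}. What \cite{BM} could \emph{not} conclude for $p\ge 5$ is the step you take for granted, namely that a quasisymmetry preserving $O$ must be an isometry; \cite[Theorem~1.2]{BM} says only that the quasisymmetry group is some finite dihedral group, not that it coincides with the isometry group (see the discussion in the introduction and \cite[Remark~8.3]{BM}). So your carpet-modulus program would at best reprove the known invariance of $O$, while the genuine obstruction remains untouched.

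The actual missing step is that $\xi$ must send the four \emph{corners} of $Q$ to corners. Granting $\xi(O)=O$, if corners go to corners then after composing with a rotation $\xi$ fixes all four corners, and \cite[Theorem~1.4]{BM} (here Theorem~\ref{thm:corners}) forces $\xi=\id$. The paper's new content is ruling out the case where some corner $c$ has a non-corner preimage $q\in O$. This is done dynamically: one extends $\xi$ to the double $D_p$ and establishes the relation $T^m\circ\xi=T^n\circ\xi\circ T^k$ on $D_p$ with the Latt\`es map $T$ (Proposition~\ref{baseprop}), via Schottky-map rigidity after uniformizing $D_p$ by a round carpet. Applying both sides to a short subinterval $I$ of a side of $O$ with endpoint $q$ gives a contradiction: since each side of $Q$ is $T$-invariant, $(T^m\circ\xi)(I)$ stays in a single side, whereas $(T^n\circ\xi\circ T^k)(I)$ is forced to meet two sides because $\xi(q)=c$ is a corner. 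Your sketch gestures at the double and at ``Latt\`es-type structure'', but not at this commutation relation, which is the engine of the proof; nothing in a modulus comparison between peripheral circles addresses the corner-to-corner question.
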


This  improves  results in  \cite{BM}. There it was shown that every quasisymmetry of $S_3$ is an isometry 
\cite[Theorem~1.1]{BM} and that the group of all quasisymmetries of $S_p$, $p\ge 5$ odd, is a finite dihedral group.  

The methods of~\cite{BM} do not seem to give the more general conclusion of Theorem~\ref{thm:standard} (see the discussion  in~\cite[Remark~8.3]{BM}). In the present paper we do rely on the results in 
 \cite{BM}, but for the proof of Theorem~\ref{thm:standard} we combine this with new ideas that were developed in 
 \cite{BLM} for the study of quasisymmetries of Sierpi\'nski carpets that arise as Julia sets of postcritically-finite rational maps. 
 Our methods also allow us to prove  other related  rigidity results
 for quasisymmetries.  For their formulation we require some more definitions.


 We consider the double 
$P$ of the unit square $Q$, i.e., $P$ is obtained from two identical copies  of   $Q$  glued together  by identifying corresponding points on their  boundaries. 
We refer to $P$ as a \emph{pillow} and endow it with the unique 
 path metric whose  restriction to  each of the two copies of $Q$ in $P$ coincides with the Euclidean metric.  We  can  identify one of the isometric copies of $Q$ with $Q$ itself and call it 
 the  {\em front} of $P$. Then $Q\sub P$. The other 
 isometric copy $Q'$ of $Q$ in $P$ is called the {\em back} of $P$.

We  consider $S_p$ as a subset of the front
$Q$ of $P$. The back $Q'$ of $P$  carries   another isometric copy $S'_p$ of $S_p$.
 We use the notation  $D_p=S_p\cup S'_p$ for the union of these sets and equip it  with the restriction of the path metric on $P$. Then $D_p$  is a Sierpi\'nski carpet  (this easily follows from a topological characterization of Sierpi\'nski carpets due to Whyburn \cite{Wh}). It  consists of two copies of $S_p$ glued together along the outer peripheral circle.


Our methods give  the following rigidity result for  $D_p$. 

\begin{theorem}\label{thm:double} 
Every quasisymmetry $\xi\: D_p\ra D_p$, $p\ge 3$ odd, is an isometry. 
\end{theorem}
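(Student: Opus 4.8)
The plan is to deduce Theorem~\ref{thm:double} from Theorem~\ref{thm:standard} by showing that every quasisymmetry $\xi\colon D_p\to D_p$ preserves the gluing curve $O$, and then by restricting $\xi$ to the two copies of $S_p$. Recall first that the peripheral circles of $D_p$ are exactly the boundaries of the squares deleted in the construction; each of them lies entirely in the front carpet $S_p$ or in the back carpet $S_p'$, and since $p\ge 3$ none of them meets $O$. Being a homeomorphism, $\xi$ permutes the peripheral circles of $D_p$, and hence maps the set of Jordan curves in $D_p$ that are disjoint from every peripheral circle onto itself. The curve $O$ belongs to this set: it is disjoint from all peripheral circles, it separates $D_p$ into the two half-carpets $S_p\setminus O$ and $S_p'\setminus O$, and $D_p$ is isometrically symmetric with respect to the reflection that exchanges these two halves and fixes $O$ pointwise. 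However, this topological information alone does not single out $O$ among separating Jordan curves avoiding all peripheral circles, so the crucial step requires metric input.

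To prove that $\xi(O)=O$ I would attach to each Jordan curve $\gamma\subset D_p$ that separates $D_p$ and is disjoint from all peripheral circles a carpet (transboundary) modulus quantity, for instance the carpet modulus of the family of all curves in $D_p$ joining the two complementary components of $\gamma$, each of which is forced to cross $\gamma$. By the quasi-invariance of carpet modulus under quasisymmetries, exploited in~\cite{BM} and~\cite{BLM}, this quantity is preserved by $\xi$ up to a controlled factor. Using the precise self-similar geometry of $D_p$, the uniform separation of the peripheral circles of a square carpet, and the reflection symmetry of $D_p$ across $O$, one shows that this modulus quantity attains its extremal value exactly when $\gamma=O$ and on no other such curve. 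It follows that $\xi(O)$ has the same extremal property, hence $\xi(O)=O$. This modulus estimate is the main obstacle, and the only place where ideas beyond Theorem~\ref{thm:standard} and~\cite{BM} are needed.

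Granting $\xi(O)=O$, the conclusion follows. Since $\xi$ is a homeomorphism it maps the two complementary components of $O$ onto the two complementary components; replacing $\xi$ by its composition with the front--back exchange isometry of $D_p$ if necessary, which does not affect the conclusion, we may assume that $\xi$ carries the front half-carpet onto itself and the back onto itself, so that $\xi$ restricts to a homeomorphism $\xi_0\colon S_p\to S_p$ (note that $O$ is nowhere dense in $S_p$, so $\overline{S_p\setminus O}=S_p$). The path metric of $D_p$ restricted to the convex front face $Q$ coincides with the Euclidean metric, since any path in the pillow $P$ between two points of $Q$ can be folded into $Q$ by reflecting its excursions into the back without changing its length; hence $\xi_0$ is $\eta$-quasisymmetric with respect to the Euclidean metric on $S_p$. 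By Theorem~\ref{thm:standard}, $\xi_0$ is a Euclidean isometry of $S_p$, and therefore the restriction to $S_p$ of one of the eight symmetries of the square $Q$. The analogous statement holds for the back copy, and the two symmetries agree on $O=\partial Q$, so they are induced by the same element of the dihedral symmetry group of $Q$. Consequently $\xi$ is the restriction to $D_p$ of an isometry of $P$, and in particular an isometry of $D_p$, as desired.
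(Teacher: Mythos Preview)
Your reduction once $\xi(O)=O$ is established is correct and essentially matches the paper: restrict to each copy of $S_p$, apply Theorem~\ref{thm:standard}, and check that the two resulting isometries agree on $O$.

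The genuine gap is in the crucial step. You propose to characterize $O$ as the unique extremizer of a carpet-modulus quantity among all separating Jordan curves in $D_p$ disjoint from the peripheral circles, but you give no argument for this, and it is far from clear that any such characterization exists. The modulus comparisons in~\cite{BM} distinguish $O$ among the \emph{peripheral circles} of $S_p$---a countable, discrete family---whereas here $O$ is not a peripheral circle of $D_p$ at all, and for $p\ge 5$ there are many competing Jordan curves (even polygonal ones lying in the edge-grid $T^{-n}(O)$) that avoid every peripheral circle. Proving that $O$ is the \emph{unique} extremizer over such a large family would require a genuinely new idea that you do not supply. Note also that you invoke only \emph{quasi}-invariance of carpet modulus (``up to a controlled factor''); with that alone, even unique extremality of $O$ would not force $\xi(O)=O$, since $\xi(O)$ would merely have modulus within a bounded factor of the extremum. (Carpet modulus is in fact exactly invariant for measure-zero carpets, so this particular point is repairable, but the uniqueness problem remains.) The paper itself stresses in the introduction that the absence of a distinguished peripheral circle is precisely what makes Theorem~\ref{thm:double} more delicate than Theorem~\ref{thm:standard}.

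The paper's actual argument is dynamical, not variational. Proposition~\ref{baseprop} gives $T^m\circ\xi=T^n\circ\xi\circ T^k$ on $D_p$; applying this to the middle peripheral circle $M$ and using $T^k(M)=O$ forces $T^n(\xi(O))=O$, so $C\coloneqq\xi(O)$ lies in the grid $T^{-n}(O)$ of $n$-edges. A further use of~\eqref{baserel} shows that every right-angle \emph{turn} of the polygonal curve $C$ must be the $\xi$-image of a corner of $O$. If $C$ has no turns, it runs parallel to a side of $O$; then conjugating the front--back reflection $R$ by $\xi$ and invoking Theorem~\ref{thm:standard} gives a contradiction via a fixed-point-set comparison. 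If $C$ has a turn that is not itself a corner of $O$, a weak-tangent argument (Lemma~\ref{lem:noqs}) rules this out. Hence $C=O$.
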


The geometry of $S_p$ distinguishes  the peripheral circle  $O$. This is supported by the fact that for 
the investigations in \cite{BM} and also for our proof of 
Theorem~\ref{thm:standard} the starting point is 
 the non-trivial fact that every quasisymmetry $\xi\: S_p\ra S_p$ has to preserve the outer peripheral circle $O$ as a  set, i.e., $\xi(O)=O$.
 In contrast, the Sierpi\'nski carpet 
$D_p$ does not carry such a distinguished peripheral circle; this  makes the rigidity result given by Theorem~\ref{thm:double}
somewhat more surprising.

To formulate our last result, we have to briefly review some standard facts from complex dynamics (see \cite{Be} for general background). 
Let $f\:  \Sph \ra \Sph$ be a rational map on the Riemann sphere 
$\Sph$ with degree $\ge 2$. For $n\in \N$, we denote by 
$$ f^n=\underbrace{f\circ \dots \circ f}_{\text{ $n$ factors}} $$
 the $n$-th iterate of $f$.
 The \emph{Fatou set} of  $f$, denoted by $\mathcal F(f)$, is the set of all points in $\Sph$ that have neighborhoods where the sequence $\{f^n\}_{n\in \N}$ of  iterates of $f$ is a normal family. The complement of $\mathcal F(f)$  in $\Sph$ is called the \emph{Julia set} of $f$ and  denoted by $\mathcal J(f)$. It is a standard  fact that $\mathcal J(f)$ is a non-empty compact set that is completely invariant under $f$, i.e., $f^{-1}(\mathcal J(f))=\mathcal J(f)=f(\mathcal J(f))$. 

The \emph{critical set} of  $f$ consists of all points in $\Sph$ near which $f$ is not a local homeomorphism. This is a finite subset of $\Sph$. The \emph{postcritical set} 
$$ \bigcup_{n\in \N} \{ f^n(c): \text {$c$ critical point of $f$}\}$$ 
of  $f$ consists of all forward iterates of critical points. A rational map
$f$  is said to be \emph{postcritically-finite} if its  postcritical set  is finite.

In \cite{BLM} it was shown that every quasisymmetry between two Sier\-pi\'n\-ski carpets that arise as Julia sets of postcritically-finite 
rational maps is a M\"obius transformation (i.e., a fractional linear
or conjugate fractional linear map on the Riemann sphere $\Sph$).
It is a natural question whether any of the carpets $S_p$ or $D_p$ can be quasisymmetrically equivalent to such a Julia set. The following statement shows that this is never the case.

\begin{theorem}\label{thm:julia}
No Sier\-pi\'n\-ski carpet $S_p$ or  $D_p$, $p\ge 3$ odd,  is quasisymmetrically equivalent to the Julia set $\mathcal J(g)$ of a postcritically-finite rational map $g$.
\end{theorem}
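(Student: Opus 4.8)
The plan is to argue by contradiction, combining the rigidity results quoted from \cite{BM} and \cite{BLM} with the structure of the quasisymmetry groups of $S_p$ and $D_p$ that we establish in this paper. Suppose some $S_p$ (or $D_p$) were quasisymmetrically equivalent to a Sierpi\'nski carpet Julia set $\mathcal J(g)$ of a postcritically-finite rational map $g$, via a quasisymmetry $\varphi$. Then $\varphi$ conjugates the quasisymmetry group of $S_p$ (resp.\ $D_p$) onto the quasisymmetry group of $\mathcal J(g)$. By \cite{BLM} every quasisymmetry of $\mathcal J(g)$ is the restriction of a M\"obius transformation of $\Sph$, so the quasisymmetry group of $\mathcal J(g)$ is a subgroup of the M\"obius group; in particular it is a Lie group, and each of its elements either has finite order or generates an infinite cyclic group acting on $\mathcal J(g)$ with fixed points (loxodromic/parabolic behavior). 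On the other side, Theorem~\ref{thm:standard} and Theorem~\ref{thm:double} tell us that \emph{every} quasisymmetry of $S_p$ and of $D_p$ is an \emph{isometry}; hence these quasisymmetry groups are finite (they act on a compact metric space by isometries and, being carpet groups, are discrete — in fact for $S_p$ they are finite dihedral groups by \cite{BM}, and for $D_p$ finiteness follows from the analogous analysis in this paper). So the contradiction must be extracted from comparing two \emph{finite} groups of M\"obius transformations together with the further geometric data carried across by $\varphi$.

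The key additional input is the behavior of \emph{peripheral circles} and the local combinatorics at their points. First I would recall that a quasisymmetry maps peripheral circles to peripheral circles (they are characterized topologically as the non-separating Jordan curves), so $\varphi$ sets up a bijection between the peripheral circles of $S_p$ (resp.\ $D_p$) and those of $\mathcal J(g)$. In the carpets $S_p$ and $D_p$ the peripheral circles are \emph{geometric squares}: round circles in the sense of the flat structure, meeting each other only in isolated points or not at all, and each point of the carpet lies on at most one peripheral circle except for the carpet's "corner-type" points. For a postcritically-finite rational Julia set, on the other hand, the peripheral circles are uniform quasicircles, and — crucially — the dynamics forces the existence of peripheral circles that are tangent to, or that intersect, infinitely many others near a postcritical or periodic point; more importantly, the group $\mathcal G$ of M\"obius symmetries of $\mathcal J(g)$ must be compatible with $g$ itself in the sense that conjugation by $g$ normalizes (a finite-index subgroup of) $\mathcal G$, and $g$ has degree $\ge 2$. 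The strategy is then: push $g$ across via $\varphi$ to obtain a branched covering $h = \varphi^{-1}\circ g \circ \varphi \colon S_p \to S_p$ (resp.\ on $D_p$) of degree $\ge 2$ that is "quasiregular" in the appropriate combinatorial sense, and derive a contradiction from the rigidity of $S_p$: a degree $\ge 2$ self-branched-cover of $S_p$ respecting the carpet structure would have to expand the flat metric, yet it must also be conjugate (on a large scale) to isometries, which is impossible on a compact space.

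I expect the main obstacle to be making precise and rigorous the claim that no branched self-covering of $S_p$ (or $D_p$) of degree $\ge 2$ exists with the requisite regularity — equivalently, that $S_p$ and $D_p$ cannot carry an "expanding Thurston map" structure compatible with their square geometry. The cleanest route is probably to avoid transporting $g$ altogether and instead argue purely at the level of symmetry groups and peripheral-circle incidence data: show that the combinatorial incidence pattern of peripheral circles of $\mathcal J(g)$ (which, because $g$ is postcritically-finite and expanding, is self-similar under $g$ and hence "unbounded" — arbitrarily many peripheral circles cluster near each postcritical point) cannot match that of $S_p$ or $D_p$, where at every point only boundedly many peripheral circles meet. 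This incidence invariant is preserved by quasisymmetries (it is topological/quasisymmetric data: the number of peripheral circles through a point, or more robustly a modulus estimate counting peripheral circles crossing a small annulus), so exhibiting the discrepancy yields the contradiction. The technical work — and the place where one must be careful — is verifying that a postcritically-finite rational Sierpi\'nski-carpet map genuinely produces such clustering of peripheral circles, which should follow from expansion of $g$ on $\mathcal J(g)$ (Julia sets of postcritically-finite maps are "semi-hyperbolic", so $g$ is expanding with respect to a visual metric) together with the fact that the peripheral circles are the boundaries of the Fatou components, whose preimages under iteration accumulate everywhere on $\mathcal J(g)$, in particular at the postcritical points which lie in $\mathcal J(g)$ for a carpet Julia set.
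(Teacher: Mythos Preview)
Your proposal does not reach a proof; each of the three strategies you sketch has a genuine gap.

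The first route (compare finite symmetry groups) you yourself abandon, correctly: nothing prevents a finite M\"obius group from being isomorphic to the isometry group of $S_p$ or $D_p$.

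The second route (transport $g$ across and rule out a degree-$\ge 2$ self-cover of $S_p$) is the right \emph{object} to study---the paper also works with $g_\xi=\xi^{-1}\circ g\circ \xi$---but your suggested contradiction is not an argument. The map $g_\xi$ is merely quasiregular, not conformal, so there is no reason it should ``expand the flat metric''; and it has degree $\ge 2$, so it is not a homeomorphism and cannot be ``conjugate to isometries'' in any sense. Note also that $D_p$ \emph{does} support a degree-$p^2$ branched self-cover respecting much of the carpet structure, namely the Latt\`es map $T$; so a bare non-existence claim of this shape is false without further qualification.

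The third route, which you call the cleanest, rests on an incorrect premise. In \emph{any} Sierpi\'nski carpet the closures of the complementary components are pairwise disjoint, so peripheral circles never touch or intersect one another---neither in $S_p$, nor in $D_p$, nor in $\mathcal J(g)$. Hence ``the number of peripheral circles through a point'' is at most one in all cases, and the ``clustering'' you describe (small peripheral circles accumulating on a point) happens at \emph{every} point of \emph{every} Sierpi\'nski carpet, including $S_p$. There is no incidence invariant of the kind you envision that distinguishes the two sides.

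What the paper actually does is quite different and more delicate. It establishes, via the blow down--blow up procedure and the Schottky rigidity theorems (Theorems~\ref{thm:stabil} and~\ref{thm:uniq}), a dynamical relation $g^m\circ\xi=g^n\circ\xi\circ T^k$ on $D_p$ (and an analogous relation for $S_p$). The contradiction then comes from a single structural difference: the union of peripheral circles of $\mathcal J(g)$ is \emph{completely} invariant under $g$, whereas $T$ sends the middle peripheral circle $M$ of $S_p$ onto $O$, which is \emph{not} a peripheral circle of $D_p$. Applying both sides of the relation to $M$ (or, in the $S_p$ case, to a corner of $Q$) produces a point that must simultaneously lie on and off a peripheral circle. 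Your proposal never isolates this asymmetry, and without it---or something playing the same role---no contradiction is available.
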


Even though there is only one topological type of Sierpi\'nski carpets~\cite{Wh}, Theorem~\ref{thm:julia} shows that standard square carpets and Julia sets of post\-cri\-ti\-cal\-ly-finite rational maps are in different quasisymmetric equivalence classes. 

By the authors' earlier work~\cite{BM}  the carpets $S_p$ and $S_q$ 
for different odd integers  $p$ and $q$ are  never quasisymmetrically 
equivalent. In~\cite{Me3}, the second author proved that a Sierpi\'nski carpet that arises as  the boundary at infinity of a torsion-free hyperbolic group cannot be quasisymmetrically equivalent to a standard carpet 
$S_p$ 
 or the Julia set of a rational map. Moreover,  in~\cite{BLM} it was  shown that no Sierpi\'nski carpet Julia set of a postcritically-finite rational map is quasisymmetrically equivalent to the limit set of a Kleinian group. 
 
 To summarize, these results tell us that there are at least three quasisymmetrically distinct classes or ``universes"  of Sierpi\'nski carpets: standard square  carpets, boundaries at infinity of hyperbolic groups (or limit sets of Kleinian groups), and Julia sets of 
 postcritically-finite rational maps. Moreover, even within these universes one often encounters infinitely many quasisymmetric equivalence classes.
 
 Before we go into the details, we will discuss  some of the ideas 
 that are used in the proofs of the main results. Our main observation is that a quasisymmetry $\xi\: D_p\ra D_p$ as in Theorem~\ref{thm:double} is related to the dynamics of a Latt\`es map $T$ (depending on $p$) that is defined on the pillow $P$ and leaves the Sier\-pi\'n\-ski carpet $D_p$ forward-invariant. More precisely, we have a relation of the form  \eqref{baserel}. Once    \eqref{baserel} is established, the proofs of Theorems~\ref{thm:standard}~and~\ref{thm:double}  are completed by carefully analyzing the implications for the mapping behavior of $\xi$ in combination with known results from \cite{BM}. For the proof of Theorem~\ref{thm:julia} one derives  similar dynamical relations for 
 a quasisymmetry $\xi$ of $D_p$ or $S_p$ onto the Julia set 
 $\mathcal{J}(g)$ of a postcritically-finite rational map $g$
 (see \eqref{eqn:rat} and \eqref{eq:SpJulia}) which ultimately lead to a contradiction. 
 
 In order to establish  \eqref{baserel} we rely on a dynamical  ``blow down-blow up" procedure very similar to the one used in \cite{BLM}. This is combined with a uniformization result
 for Sier\-pi\'n\-ski carpets proved by the first author \cite{Bo} and rigidity results for Schottky maps established by the second author \cite{Me1,Me2}. 
 
 The paper is organized as follows. In Section~\ref{s:lat} we introduce 
 the Latt\`es map $T$  mentioned above and some geometric facts related to the dynamics of $T$. 
 Section~\ref{sec:adm} 
 is devoted to the resolution of some technicalities that are ultimately caused by the lack of backward invariance of $D_p$ under $T$. This relies on the concept of an {\em admissible map}
 that is introduced and studied in this section. In Section~\ref{sec:Sch}
 we review the necessary background from the theory of Schottky maps and the required rigidity results (in particular, Theorems~\ref{thm:stabil} and~\ref{thm:uniq}). In Section~\ref{sec:lattes} we prove Proposition~\ref{baseprop} that provides the crucial relation \eqref{baserel}. The proof of Theorems~\ref{thm:standard},~\ref{thm:double} and \ref{thm:julia} are then given in the two subsequent sections.

 \section{The Latt\`es map $T$}\label{s:lat} 
 Throughout this paper  $p\ge3$ is a fixed odd integer. 
Our pillow $P$ as defined in the introduction is equipped with a path metric that agrees with the Euclidean metric on the front $Q$ and on the back $Q'$ of $P$. In the following, all metric notions related to $P$ will be based on this metric. The pillow $P$  is an (abstract) polyhedral surface and so it carries a natural conformal structure making it conformally equivalent to the 
Riemann sphere. On the subsquare $[0,1/p]^2$ of the front $Q=[0,1]^2$ of $P$, we consider the map $z\in  [0,1/p]^2 \mapsto 
pz\in Q$. By Schwarz reflection this naturally extends to a  
map $T\: P\ra P$. 
Note that this  extension of $T$ to all of $P$ using Schwarz reflection is possible, because in the obvious subdivision of $P$ into $2p^2$ subsquares of equal size, each corner of every subsquare is common to an even number of subsquares in the subdivision. Of course, $T$ depends on $p$, but we suppress this from our notation. 

With the conformal structure on $P$, the map $T$ is holomorphic. 
By the uniformization theorem there is a conformal map of $P$ onto $\Ch$. Under such a conformal identification $P\cong \Ch$, the map $T$ is a rational map on $\Sph$, a so-called  {\em Latt\`es map} (see \cite[Chapter~3]{BoMy} for a detailed discussion of  Latt\`es maps from this point of view).
 Note that $T(D_p)=D_p$, i.e., $D_p$ is forward invariant under $T$, but clearly not backward invariant. 

Let $n\in \N_0$. Then each of the two faces $Q$ and $Q'$  of the pillow $P$ is in a natural way subdivided into $p^n$ squares of side length $p^{-n}$. We call a  square obtained in this way from the subdivision of $Q$ or $Q'$ a {\em tile of level $n$} or simply an {\em $n$-tile}. So there are $2p^{2n}$ tiles of level $n$.  Similarly, we call the sides of these $n$-tiles the $n$-{\em edges} and their corners the $n$-{\em vertices} (this terminology is motivated by the language in 
\cite[Section~5.3]{BoMy}).

On  each $n$-tile $X^n$ the iterate $T^n$ behaves like a similarity map and sends  $X^n$ homeomorphically to either  $Q$ or $Q'$. Here and elsewhere we use the convention that $T^0$ denotes the identity map on $P$. 
We  assign the color white or black to the  $n$-tile $X^n$ as follows: if $T^n(X^n)=Q$, then we assign to $X^n$ the color white, and if $T^n(X^n)=Q'$ the color black. Colors on $n$-tiles alternate so that two $n$-tiles sharing a side have different colors.
Therefore,  the $n$-tiles form a {\em checkerboard tiling} of $P$
(as defined in  \cite[Section~5.3]{BoMy}).
  
More generally, if $k,n\in \N_0$, and $X^{n+k}$ is an $(n+k)$-tile, then $T^n$ is a homeomorphism of $X^{n+k}$ onto the $k$-tile  $X^k\coloneqq T^n(X^{n+k})$. Moreover, $T^n$ is
 color-preserving in the sense that $X^{n+k}$ and $X^k$
have the same color.  

In general, an {\em inverse branch} $T^{-n}$ for $n\in \N$ is a right inverse of $T^n$ defined on some subset of $P$. In this paper, we will consider very specific inverse branches defined on $Q$. To define them, let $c\coloneqq (0,0)\in Q$ be the lower left corner of $Q$. Then $Z^n=[0, 1/p^n]^2$  is the  unique $n$-tile $Z^n$ with $c\in Z^n\sub Q$ and  $T^n$ sends $Z^n$ homeomorphically onto $Q$.  We define  $T^{-n}\coloneqq (T^n|Z^n)^{-1}$ and so $T^{-n}\: Q\ra Z^n$ is the unique map such that $T^n\circ T^{-n}$ is the identity on $Q$.
  
If $k,n\in \N$, then with these definitions we have $T^{-(n+k)}=
T^{-n}\circ T^{-k}$ and, if $n>k$ in addition,   
 $T^{n-k}\circ T^{-n} =T^{-k}$. This latter consistency  condition  for  inverse branches  will be important in Section~\ref{sec:lattes} (see \eqref{eq:consist}).

For some $n$-tiles  $X^n$ the  interior $\inte(X^n)$ is disjoint from  $D_p$, because $\inte(X^n)$ falls into one of the sets that were removed   from $Q$ or $Q'$ in the construction of $S_p$ and $S_p'$. We call an  $n$-tile $X^n$ {\em good} if $\inte(X^n)\cap D_p\ne \emptyset$.  
 There are precisely 
$2(p^2-1)^n$ good $n$-tiles. It follows from the self-similar construction of $S_p$ that if $X^n$ is a good white or black $n$-tile, then $D_p\cap X^n$ is a scaled copy of $S_p$. Moreover,  then 
$T^n$ is a homeomorphism of $D_p\cap X^n$ onto $S_p$ or $S'_p$, respectively.  

The inverse branches $T^{-n}$ defined above preserve the color of a tile.
Moreover, $T^{-n}$
 induces  a bijection  between the  good subtiles of $Q$ and the good subtiles of $Z^n=T^{-n}(Q)$. So in particular, if $k\in \N_0$ and $X^k\sub Q$ is a $k$-tile, then $X^{n+k}\coloneqq T^{-n}(X^k)$ is $(n+k)$-tile with the same color as $X^k$. Moreover, $X^k$ is a good tile if and only if $X^{n+k}$ is. 
 
We   will now establish  a    geometric fact about quasisymmetries and tiles 
  that will be used later (see Lemma~\ref{lem:subtile}).
  First we prove an auxiliary result.  In both
of the following  lemmas and their proofs $p\in \N$, $p\ge 3$ odd, is fixed and metric notions refer to the piecewise Euclidean metric on $P$ discussed above. 

\begin{lemma} \label{lem:simpcon}
Let $m,\ell\in \N_0$, $\ell\ge 1$, $v\in P$ be an $m$-vertex, $K$ be the union of all $m$-edges that meet $v$, and $\Om$ be the interior of the union of all $(m+\ell)$-tiles that meet $K$. Then  $\Om$ is a simply connected region that contains the open 
$p^{-(m+\ell)}$-neighborhood of $K$, but does not contain any ball  of radius $r>\sqrt 2\cdot p^{-(m+\ell)}$.  
\end{lemma}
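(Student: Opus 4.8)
The plan is to reduce all three assertions to an elementary picture for a square grid in the Euclidean plane. Write $\delta=p^{-(m+\ell)}$, so that $\delta\le p^{-1}p^{-m}\le\tfrac13 p^{-m}$ since $\ell\ge1$. Let $U$ be the closed union of all $(m+\ell)$-tiles that meet $K$; then $\Om=\inte(U)$, and since each such tile has diameter $\sqrt2\,\delta$ we get $U\subseteq\{x\in P:\dist(x,K)\le\sqrt2\,\delta\}$, so $\Om$ and $K$ lie in a small ball about $v$ and the whole question is local.

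First I would record the local structure. The set $K$ is a union of $m$-edges, each a segment of length $p^{-m}$, any two of which meet only at $v$; there are four of them when $v$ is not a corner of $Q$, and two of them, lying on $\partial Q$, when $v$ is a corner of $Q$. Note that $P$ with its path metric is flat away from its four cone points of angle $\pi$, located at the corners of $Q$. Hence near any point of $K$ that is not a corner of $Q$ a neighbourhood develops isometrically into $\R^2$, carrying the $(m+\ell)$-tiles to squares of a grid of mesh $\delta$ and the $m$-edges to segments of that grid. Near a corner $w$ of $Q$ lying on $K$ — there are at most three such $w$, at most one being $v$ — I would instead use the branched double cover $\Pi$ of the cone of angle $\pi$ by a flat disc: $\Pi$ is an open local isometry, two-to-one off its centre, distance non-increasing, it pulls the tiling back to a genuine square grid of mesh $\delta$, and it pulls $K$ back to a full diameter of the disc, respectively (when $w=v$) to a full four-armed cross. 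Since $\Pi$ is open, $\Pi^{-1}(\Om)=\inte(\Pi^{-1}(U))$, so the conclusions may be verified upstairs and pushed down. In this way every point of $K$ acquires a planar model in which $K$ is a finite family of grid segments issuing from a common point, pairwise meeting only there and forming at that point either two collinear, two perpendicular, or four perpendicular segments; $U$ is the union over these segments of the ``$\delta$-slab'' consisting of the two rows of grid squares abutting the segment — hence a union of axis-parallel $2\delta$-wide rectangles, all containing the $2\delta\times 2\delta$ square centred at $v$ — and $\Om=\inte(U)$.

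In this planar model the three claims are routine. Each $\delta$-slab contains the open $\delta$-neighbourhood of its core segment, and the $\delta$-neighbourhoods of the endpoints of the arms of $K$ are absorbed by the one-square overhangs of the slabs past those endpoints; hence $N_\delta(K)\subseteq\Om$. As $U$ is a union of axis-parallel rectangles all sharing the central $2\delta\times 2\delta$ square, $U$ is a closed topological disc with Jordan boundary, so $\Om=\inte(U)$ is a region whose complement in the $2$-sphere $P$ is connected; thus $\Om$ is simply connected. For the last assertion, note that every point of a single $2\delta$-wide slab lies within distance $\delta$ of the complement of that slab, whereas two slabs cross transversally only at $v$, where the local picture is one horizontal and one vertical $\delta$-slab, and a direct computation shows that there every point lies within $\sqrt2\,\delta$ of the complement — the value $\sqrt2\,\delta$ being attained only at $v$. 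Hence every point of $\Om$ lies within $\sqrt2\,\delta$ of $P\setminus\Om$, so $\Om$ contains no ball of radius $r>\sqrt2\,\delta$.

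The step I expect to be the main obstacle is the bookkeeping at the cone points of $P$: checking that near a corner of $Q$ the union of tiles meeting $K$ really pulls back under $\Pi$ to the clean slab picture described above, that the local pieces glue together to a single closed disc, and that the passage to the quotient creates no larger inscribed ball. The cases where $v$ is itself a corner of $Q$, and the small case $m=0$ (in which $K$ can run through three corners of $Q$), require the most care; once these are handled, the rest is routine Euclidean geometry on a square grid.
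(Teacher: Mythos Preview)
Your approach is correct, but it is considerably more laborious than the paper's, and the difference is instructive. You develop $\Om$ into the Euclidean plane, which forces you to treat the cone points of $P$ via branched double covers and then glue the local pictures back together; you correctly flag this gluing (especially when $m=0$ and $K$ runs through three corners of $Q$) as the main obstacle, and indeed most of the remaining work in your outline sits there. By contrast, the paper works intrinsically on $P$ and never develops or takes covers at all. For simple connectedness it simply observes that $\Om$ deformation retracts onto $K$, and $K$ is contractible; this single sentence replaces your entire ``$U$ is a topological disc after gluing'' programme and is completely insensitive to cone points. For the $p^{-(m+\ell)}$-neighbourhood the paper argues pointwise: given $x\in K$, take an $(m+\ell)$-edge $e\subseteq K$ through $x$ and note that the (at most six) $(m+\ell)$-tiles touching an endpoint of $e$ already contain $B(x,p^{-(m+\ell)})$. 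Most strikingly, for the no-large-ball assertion the paper avoids any slab computation: every $x\in\Om$ lies in some $(m+\ell)$-tile $X$ meeting $K$, every such tile has at least one corner $y\notin\Om$, and $\dist(x,y)\le\sqrt 2\, p^{-(m+\ell)}$. Your slab-distance argument reaches the same bound but with more calculation, and it too needs the branched-cover bookkeeping to transfer back to $P$. In short: your plan works, but the paper's retraction-to-$K$ and corner-of-a-tile tricks let one prove the lemma in a few lines directly on the pillow, bypassing the cone-point case analysis entirely.
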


\begin{proof} Note that unless $v$ is a corner of $P$, the set $K$ forms a ``cross" (possibly ``folded" 
if $v\in \partial Q=\partial Q'$). If $v$ is a corner of $P$, then $K$  consists of two line segments of length $p^{-m}$ meeting perpendicularly at the  common endpoint $v$. 

Obviously, $K$ is contained in $\Om$. 
Moreover, $\Om$ is connected, because   two arbitrary points $x,y\in \Om$ can be joined by a path in $\Om$ as follows. There exist $(m+\ell)$-tiles $X$ and $Y$ with $x\in X$, $y\in Y$, $X\cap K\ne \emptyset$, and $Y\cap K\ne \emptyset$. Then one runs from $x$ to a point in $x'\in   X\cap K$ along a path in $X\cap \Om$, from $x'$ along a path in $K\sub \Om$  to a point in $y'\in   Y\cap K$, and finally from $y'$ to $y$ along a path in  $Y\cap \Om$.  This shows that $\Om$ is a region. 

The region $\Om$ is simply connected, i.e., a contractible space, because  $\Om$ can be retracted to $K\sub \Om$ and $K$ is contractible. 

Let  $x\in K$ be arbitrary. Then there exists an $(m+\ell)$-edge $e\sub K$ such that $x\in e$. There are at most six $(m+\ell)$-tiles that have one of the endpoints of $e$ as a corner. The union of these tiles is a set $M$ whose interior is contained in $\Om$ and contains 
the ball $B(x, p^{-(m+\ell)})$. Hence $B(x, p^{-(m+\ell)})\sub \Om$ which implies that $\Om$ contains the open $p^{-(m+\ell)}$-neighborhood of $K$. 

Finally, every point $x\in \Om$ is contained in an  $(m+\ell)$-tile $X$ that meets $K$. Every such tile $X$ contains   a 
corner  $y \not\in \Omega$. 
For the distance of $x$ and $y$ we have $\dist(x,y)\le \sqrt2 \cdot  p^{-(m+\ell)}$. This implies that $\Omega$ cannot contain any ball of radius $r> \sqrt2 \cdot p^{-(m+\ell)}$. 
\end{proof} 

\begin{lemma} \label{lem:subtile}
Let $\xi\: P\ra P$ be a quasisym\-metry with $\xi(D_p)\sub D_p$. Then there exist numbers $r_0,N\in \N$ and $C\ge 1$  with the following properties: if $n\in \N_0$ with $n\ge N$ and $X\sub P$ is a good $n$-tile, then there exist 
a good $(n+r_0)$-tile $Y\sub X$ and a good  $m$-tile $Z$ for some $m\in \N_0$ such that 
$\xi(Y)\sub Z$ and 
\begin{equation}\label{eq:diamxiY}
\frac 1C p^{-m} \le \diam (\xi(Y)) \le C p^{-m} . 
\end{equation}      
\end{lemma}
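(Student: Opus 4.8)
The plan is to exploit that a quasisymmetry sends balls to "roundish" sets of controlled eccentricity, and to combine this with the geometry of the checkerboard tiling from Lemma~\ref{lem:simpcon}. First I would fix the distortion function $\eta$ of $\xi$ and recall the standard consequence: there is a constant $H\ge 1$ (depending only on $\eta$) such that for every $x\in P$ and every $s>0$ with $B(x,s)\ne P$, the image $\xi(B(x,s))$ contains a ball of radius $\rho$ and is contained in a ball of radius $H\rho$ about $\xi(x)$, for a suitable $\rho=\rho(x,s)>0$. Applying this to the concentric pair gives, for a good $n$-tile $X$ with center $x_X$ and inradius comparable to $p^{-n}$, a ball $B(\xi(x_X),\rho)\sub \xi(X)$ with $\diam(\xi(X))\le 2H\rho$; crucially $\rho\to 0$ uniformly as $n\to\infty$ by uniform continuity of $\xi$ (which is itself a quasisymmetry of a compact space, hence uniformly continuous). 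So I may choose $N$ so large that for every good $n$-tile $X$ with $n\ge N$ the number $\rho$ above is less than, say, $\tfrac1{100}$.

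Next I would locate the scale. Given such an $X$, pick $m\in\N_0$ with $p^{-(m+\ell)}<\rho/H\le p^{-(m+\ell)+1}$ for a fixed $\ell$ to be chosen (in fact $\ell=1$ suffices), so that the image ball $B(\xi(x_X),\rho)$, having radius $\ge\rho$, does \emph{not} fit inside the simply connected region $\Om$ of Lemma~\ref{lem:simpcon} associated to any $m$-vertex (since such $\Om$ contains no ball of radius $>\sqrt2\,p^{-(m+\ell)}$), while on the other hand $B(\xi(x_X),\rho)$ meets at most finitely many $m$-tiles in a controlled way. The point is: since $B(\xi(x_X),\rho)$ is too big to be swallowed by the "cross-neighborhood" $\Om$ of any single $m$-vertex, the connected set $\xi(X)\supset B(\xi(x_X),\rho)$ must contain an $m$-tile $Z$ whose interior it meets essentially — more precisely, some $m$-tile $Z$ with $\inte(Z)\cap\xi(X)\ne\emptyset$ and with $Z$ good (good because $\xi(X)\sub\xi(D_p)\sub D_p$ forces every point of $\xi(X)$ into $D_p$, so any $m$-tile meeting $\xi(X)$ in its interior is good). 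This $Z$ satisfies $p^{-m}\le\diam(B(\xi(x_X),\rho))\cdot(\text{const})$, giving the lower bound in \eqref{eq:diamxiY} once we pass from $X$ to a subtile, and the upper bound $\diam(\xi(X))\le 2H\rho\le C'p^{-m}$ comes directly from the choice of $m$.

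To produce the subtile $Y$: inside $X$ the good $(n+r_0)$-subtiles are a scaled copy of the level-$r_0$ good tiles of $S_p$, and by the self-similarity of $S_p$ there is one such subtile $Y$ meeting $\inte(Z)$ after applying $\xi$ — indeed, I would instead argue the cleaner way round, choosing $r_0$ once and for all so large that among the $(p^2-1)^{r_0}$ good $(n+r_0)$-subtiles of $X$ at least one, call it $Y$, has $\xi(Y)$ contained in a single $m$-tile $Z$: this follows because $\xi(X)$ is covered by boundedly many $m$-tiles (bounded in terms of $H$), the subtiles $Y$ have diameter $\le p^{-r_0}\diam(X)$ hence $\xi(Y)$ has diameter $\le$ (something $\to 0$ as $r_0\to\infty$, uniformly, again by the quasisymmetry modulus estimate applied at the scale $p^{-n}$ versus $p^{-(n+r_0)}$), so for $r_0$ large enough $\diam(\xi(Y))$ is smaller than the distance between any two disjoint $m$-edges not sharing a vertex, forcing $\xi(Y)$ to lie in the interior of $\Om$ for some $m$-vertex $v$, hence in one of the (at most six) $m$-tiles forming $\Om$ — pick $Z$ to be one of these containing a point of $\xi(Y)$ in its interior, which is good as before. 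Finally \eqref{eq:diamxiY}: the upper bound is immediate from $\xi(Y)\sub Z$, $\diam Z=\sqrt2\,p^{-m}$; for the lower bound apply the quasisymmetry estimate in reverse at the pair of scales (inradius of $Y$) versus (diameter of $Y$) inside $X$, comparing with the image — equivalently, use that $\xi^{-1}$ is $\eta'$-quasisymmetric so $\xi^{-1}(Z)$ cannot be too large, which bounds $p^{-m}$ from below by a constant times $\diam(\xi(Y))$ after possibly enlarging $Z$. Adjusting $N$, $r_0$, $C$ to absorb all the finitely many constants completes the proof.

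The main obstacle is the \emph{uniformity}: all of $N$, $r_0$, $C$ must be independent of $n$ and of the particular good tile $X$. This is what forces the use of the quasisymmetric modulus-of-continuity estimate (rather than mere continuity) to control $\rho=\rho(x_X,p^{-n})$ and $\diam(\xi(Y))$ simultaneously from above and below in terms of $p^{-n}$ and $p^{-(n+r_0)}$; and it is Lemma~\ref{lem:simpcon} that converts "the image set is not too small and not too eccentric" into the combinatorial statement "it contains, or is contained in, a good tile of the right level." Making the choice of $r_0$ large enough to guarantee $\xi(Y)$ falls inside a single $m$-tile — while $m$ is itself determined by $n$ — is the one quantitative point requiring care, but it reduces to comparing $\diam(\xi(Y))$ with $p^{-m}$, both of which are pinned to $p^{-n}$ by the quasisymmetry estimates, so a fixed gap $r_0$ works.
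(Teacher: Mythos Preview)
Your plan has the right skeleton --- use the eccentricity bounds for a quasisymmetry to fix the scale $m\asymp -\log_p\diam(\xi(X))$, then pick $r_0$ large enough that $\diam(\xi(Y))\ll p^{-m}$ for any good $(n+r_0)$-subtile $Y\sub X$ --- but the step where you actually place $\xi(Y)$ inside a \emph{single} $m$-tile $Z$ has a genuine gap.

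You write that making $\diam(\xi(Y))$ small ``forc[es] $\xi(Y)$ to lie in the interior of $\Om$ for some $m$-vertex $v$, hence in one of the (at most six) $m$-tiles forming $\Om$.'' This is a non-sequitur: lying inside the cross-neighborhood $\Om$ of Lemma~\ref{lem:simpcon} does \emph{not} mean lying inside one $m$-tile. The region $\Om$ straddles several $m$-tiles along the $m$-edges through $v$, and a small connected set sitting on one of those edges (or on $v$ itself) is contained in $\Om$ yet in no single tile. Nothing in your argument excludes this --- for all you have shown, \emph{every} good subtile $Y$ could have $\xi(Y)$ sitting right on an $m$-edge. A counting or pigeonhole over the $(p^2-1)^{r_0}$ subtiles does not obviously help, since the set $\xi(X\cap D_p)$ is a carpet, not a region, and could in principle cluster near the $m$-grid.

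The paper fixes exactly this by applying Lemma~\ref{lem:simpcon} in the opposite direction. It works with the Jordan curve $J=\xi(\partial X)$ and proves (the Claim) that some point $a\in J$ has $\dist(a,E)\ge p^{-(m+\ell)}$, where $E$ is the union of all $m$-edges: if not, $J$ would lie in the $\Om$ attached to a single $m$-vertex, hence so would one complementary component of $J$; but that component contains a ball of radius $>\sqrt 2\,p^{-(m+\ell)}$ (this is where the ``quasi-ball'' estimate is used), contradicting Lemma~\ref{lem:simpcon}. Then one chooses $Y$ to be a boundary subtile of $X$ (automatically good) containing $\xi^{-1}(a)$; with $r_0$ large, $\xi(Y)$ is so small that $\dist(\xi(Y),E)>0$, and \emph{now} $\xi(Y)$ lies in a single $m$-tile $Z$. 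The key idea you are missing is this use of $J$ to locate a point far from $E$.

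A minor error as well: you write ``$\xi(X)\sub\xi(D_p)\sub D_p$'' to conclude $Z$ is good, but $X$ is a full square, not a subset of $D_p$, so $\xi(X)\not\sub D_p$. The correct argument (as in the paper) takes a single point $b\in\inte(Y)\cap D_p$, notes $\xi(b)\in \inte(Z)\cap D_p$, and concludes $Z$ is good.
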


If $A$ and $B$ are two quantities, then we write $A\asymp B$ if there exists a constant $C\ge 1$ only depending on some ambient
 parameters such that $A/C\le B\le CA$. Similarly, we write 
 $A\lesssim B$ or $B\gtrsim A$ if $A\le CB$. 

Then \eqref{eq:diamxiY} can be written as $\diam(Z) \asymp p^{-m}$, where  the  implicit  multiplicative constant $C\geq1$ is independent of $Z$. So Lemma~\ref{lem:subtile} says that $\xi(Y)$ lies in a good  $m$-tile $Z$ of  comparable size with constants of comparability independent of the initial  choice of $X$. In general, one cannot 
guarantee that the set $\xi(X)$ itself lies in a good tile of comparable size. 

\begin{proof} Let $X$ be a good  $n$-tile, where $n\in \N_0$.  Since $\xi$ is a quasisym\-metry, the image $\xi(X)$ is a ``quasi-ball". So if $x_1$ is the center of the square $X$, then $\xi(x_1)$ has a distance to the Jordan curve $J\coloneqq\xi(\partial X)$ that is comparable to 
$\diam(J)$.  Similarly, there exists a point  $x_2\in P\setminus X$ (for example, for $x_2$ we can take the center of the face of $P$ on the opposite side of $X$) such that
$\dist(\xi(x_2), J)\gtrsim \diam(J)$, i.e., we have $\dist(\xi(x_2), J)\geq \diam(J)/C$ for some constant $C\geq1$ that depends only on $\xi$. Let $y_i=\xi(x_i)$ for $i=1,2$. Then $y_1$ and $y_2$ lie in different components of $P\setminus J$. Moreover, there exists a constant $\delta>0$ independent of $n$ and $X$ such that  
 $\dist(y_i, J)>\delta \diam(J)$. This shows that  each of the two complementary components of $J$ in $P$ contains a disk of radius 
$r\coloneqq \delta \diam(J)$.

Uniform continuity of $\xi$ implies that there exists $N\in \N_0$ that depends only on $\xi$ such that if $n\ge N$, then $\diam(J)<1/3$. 
In this case,  we can choose  the largest number  $m\in \N_0$ such that 
$\diam(J)< \frac 1{3}p^{-m}$. 
Then  $\frac 1{3} p^{-(m+1)}\le \diam(J) <\frac 1{3} p^{-m}$, and so  $\diam(J) \asymp p^{-m}$. 
We can choose $\ell \in \N$ only depending on $\delta$ (and independent of $X$) such that $r=\delta \diam (J)> \sqrt 2 \cdot p^{-(m+\ell)}$.
By the choice of $\delta$, each of the two complementary components of $J$ contains a ball of radius $r>\sqrt 2\cdot p^{-(m+\ell)}$.

\smallskip
{\em Claim.} Let $E\sub P$ denote the union of all $m$-edges. Then  there exists a point $a\in J$ such that $\dist (a, E)\ge  \eps\coloneqq p^{-(m+\ell)}$.

\smallskip 
In order to prove the claim, we argue by contradiction and  assume that there is no such point. Then $J$ is contained in the open $\eps$-neighborhood of $E$. In particular, 
there exists an $m$-edge $e$ such that $\dist(e, J)<\eps$.

If $e_1$ and $e_2$ are two disjoint $m$-edges, then the connected set $J$ cannot be $\eps$-close to both of them. Indeed, if this were the case, then   it follows from 
 $\dist(e_1,e_2)\ge p^{-m}$, $\eps\le p^{-(m+1)}\le  \frac13 p^{-m}$ and 
 $\diam(J)<\frac 1{3}p^{-m}$ that  
 $$   \tfrac 1{3}p^{-m} > \diam(J)\ge \dist(e_1,e_2)-2\eps\ge \tfrac13 p^{-m}.$$  This is a contradiction. 
 
 Since $J$ cannot be $\eps$-close to two disjoint $m$-edges, 
 one of the endpoints  $v$ of $e$, which is an $m$-vertex, has the following property: if $K$ is the set of all $m$-edges that meet $v$, then $J$ is contained in the $\eps$-neighborhood of $K$. In particular, the Jordan curve $J$ is contained in the simply connected region $\Om$ as defined in Lemma~\ref{lem:simpcon} for the $m$-vertex $v$ and our choice of  $\ell$. 
 
 Then one of the two complementary components $U$ of $J$ is also contained in $\Om$, because $\Om$ is simply connected.  This is a contradiction, because  $U$ contains a ball  of radius $r=\delta \diam(J)> \sqrt 2\cdot p^{-(m+\ell)}$ by what we have seen above,  while $\Om\supseteq U$ contains no such ball  by Lemma~\ref{lem:simpcon}. The Claim follows.

\smallskip \noindent 
Since $\xi$ is a quasisymmetry, we can choose $r_0\in \N$ sufficiently large  independent of $X$  with the following property: if $Y$ is any $(n+r_0)$-tile with $Y\sub X$ and 
$Y\cap \partial X\ne \emptyset$, then 
$$ \diam (\xi(Y)) \le p^{-\ell} \diam (\xi(\partial X))= p^{-\ell} \diam (J)
<\tfrac13 p^{-(m+\ell)}. $$
Note that these tiles $Y$ are lined up along the boundary of $X$ and  cover $\partial X$. Each such tile $Y$  is a good tile, because $X$ is a good tile.

Therefore, we  can choose such a tile $Y$ so that $\xi(Y)$  contains a point $a\in J$ with $\dist(a,E)\ge p^{-(m+\ell)}$ as provided by the Claim. Then 
$$ \dist(\xi(Y), E)\ge \dist(a,E)-\diam (\xi(Y))\ge p^{-(m+\ell)}-
\tfrac13 
p^{-(m+\ell)}>0,  $$
and so  $\xi(Y)$ does not meet the union  $E$ of all $m$-edges. Since 
$\xi(Y)$ is a connected set, it must be contained in the interior of 
an $m$-tile, because these interiors are precisely the complementary components of $E$. In particular, there exists an $m$-tile $Z$ such that $\xi(Y)\sub Z$. Since $Y$ is a good tile, there exists a point $b\in \inte(Y)\cap D_p$. Then 
$$\xi(b)\in \xi(\inte(Y)) \cap\xi(D_p)\sub \inte(Z)\cap D_p.$$ 
This implies that $Z$ is a  good tile.  

Since $r_0$ is fixed and independent of $X$, the fact that $\xi$ is a quasisymmetry implies that 
 $$ \diam(\xi(Y))\asymp \diam(J) \asymp p^{-m}$$ 
 with implicit multiplicative constants independent of $X$ and $Y$. 
 It follows that we can find a suitable constant $C\ge 1$ independent of $X$ such that inequality \eqref{eq:diamxiY} is always valid. The statement follows.
\end{proof}

 \section{Admissible maps}\label{sec:adm}

In order to prove Theorems~\ref{thm:standard} and~\ref{thm:double}, we want to establish a relation between a given quasisymmetry 
$\xi\: D_p\ra D_p$ and our Latt\`es map $T$ (see Proposition~\ref{baseprop}). This relation can be obtained by arguments similar to \cite{BLM} relying  on rigidity statements for Schottky maps. 
These Schottky maps are obtained after a quasisymmetric uniformization of $D_p$ by a {\em round} Sier\-pi\'n\-ski carpet, i.e., a Sier\-pi\'n\-ski carpet in $\Sph$ all of whose peripheral circles are geometric circles. 
We will discuss the necessary results in Section~\ref{sec:Sch}.

 Unfortunately, there are some technicalities that are essentially due to the lack of backward invariance of $D_p$ under $T$ 
(see \cite[Lemma~6.1]{BLM}, where a related statement relied on backward invariance).
To work around this problem, we introduce in this section the ad hoc  notion of an {\em admissible map}. We will prove several statements about these maps  that will allow us to apply the results on Schottky maps.  We now present the details.

Let $S^2$ be a topological $2$-sphere. We think of it as equipped with an orientation and a metric $d$. Subsets of $S^2$ will carry the restriction of $d$, and so it makes sense to speak of quasisymmetries between such sets. In our applications, $S^2$ will be the pillow $P$ equipped with the piecewise Euclidean metric described earlier or the Riemann sphere $\Ch$ equipped with the chordal metric. 

Let $Z\sub S^2$ be a set and  $f\: U\ra S^2$ be a map defined on 
a 
set $U\sub S^2$.  We say that $x\in Z$ is a  {\em good point}  for $f$ and $Z$ if the following condition is true:  there exists an (open)   Jordan region $V\sub S^2$ with $x\in V$ such that $f$ is defined on $V$, the set $W=f(V)$ is also a Jordan region, and $f|V\: V\ra W$ is an orientation-preserving  quasisymmetric homeomorphism with $f(V\cap Z)=W\cap Z$.
In particular,  $f$ is then a homeomorphism of $V\cap Z$ onto $W\cap Z$.  

 Let $Z\sub S^2$ be a Sierpi\'nski carpet, and $f\: S^2\ra S^2$ be a branched covering map (for the definition of a branched covering map and more background on this topic see \cite[Chapter~2]{BoMy}). We say that $f$ is {\em admissible} for the given 
 Sier\-pi\'n\-ski carpet $Z$ if $f(Z)\sub Z$ and if there exists a set $E\sub Z$ that is contained in a union  of a finite set and finitely many peripheral circles of $Z$ such that each point $x\in Z\setminus E$ is a good point for $f$ and $Z$.
We call $E$ an  {\em exceptional set}  for $f$.    Note that $E$ is not necessarily
the complement in $Z$ of all good points, but it contains this complement. 

\begin{lemma}\label{lem:qradm}
Let $Z\sub \Ch$ be a Sierpi\'nski carpet, and $f\: \Ch\ra \Ch$ be  quasiregular map with $f^{-1}(Z)=Z$. Then $f$ is an admissible map for $Z$. 
\end{lemma}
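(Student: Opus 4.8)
The plan is to take as exceptional set $E$ the (finite) set of branch points of $f$ lying in $Z$; the full backward invariance $f^{-1}(Z)=Z$ will make everything else go through, and in particular no peripheral circles will be needed in $E$.

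First I would record two elementary facts. Since $f^{-1}(Z)=Z$, every $x\in Z$ satisfies $f(x)\in Z$, so $f(Z)\subset Z$. Moreover, a quasiregular self-map of $\Ch$ is a sense-preserving branched covering map: by Stoïlow factorization we may write $f=g\circ\varphi$ with $\varphi\colon\Ch\to\Ch$ a quasiconformal homeomorphism and $g$ a rational map, so $f$ is a branched cover, and its branch set $B_f$ (the image under $\varphi^{-1}$ of the critical set of $g$) is finite. Set $E:=B_f\cap Z$. This is a finite subset of $Z$, hence trivially contained in a union of a finite set and finitely many peripheral circles of $Z$ (with no peripheral circles actually occurring).

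It remains to show that every $x\in Z\setminus E$ is a good point for $f$ and $Z$. Fix such an $x$. Since $x\notin B_f$, the map $f$ is a local homeomorphism at $x$, and, $f$ being quasiregular and locally injective near $x$, there is a round ball $B=B(x,R)$ with $\overline B\ne\Ch$ on which $f$ is a $K$-quasiconformal homeomorphism onto its image. Put $V:=B(x,R/2)$ and $W:=f(V)$. Then $\overline V\subset B$, so $f|_{\overline V}$ is a homeomorphism onto the closed Jordan region $f(\overline V)$; since $f|_B$ is open and injective, the interior of $f(\overline V)$ is exactly $W$, so $W$ is a Jordan region containing $f(x)$, while $V$ is a Jordan region containing $x$. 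The restriction $f|V\colon V\to W$ is a homeomorphism, orientation-preserving because $f$ is sense-preserving, and it is $\eta$-quasisymmetric by the standard fact that a $K$-quasiconformal homeomorphism restricted to a relatively compact subdomain is quasisymmetric with $\eta$ depending only on $K$. Finally, $f(V\cap Z)\subset W\cap Z$ follows from $f(Z)\subset Z$, and conversely, if $y\in W\cap Z$ then the unique $x'\in V$ with $f(x')=y$ lies in $f^{-1}(Z)=Z$, whence $y\in f(V\cap Z)$; thus $f(V\cap Z)=W\cap Z$. Hence $x$ is a good point, and $f$ is admissible for $Z$ with exceptional set $E$.

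The two ingredients I would cite rather than reprove are standard: that a quasiregular self-map of $\Ch$ is a branched cover with finite branch set (Stoïlow factorization together with Riemann--Hurwitz), and that quasiconformality localizes to quasisymmetry on relatively compact subdomains. I do not expect a serious obstacle; the point to handle with care is the identity $f(V\cap Z)=W\cap Z$, which is precisely where the backward invariance $f^{-1}(Z)=Z$ -- rather than mere forward invariance $f(Z)\subset Z$ -- is used, and it is this strong invariance that makes the exceptional set here so simple, in contrast to the situation for the Latt\`es map $T$.
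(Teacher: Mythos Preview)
Your proof is correct and follows the same approach as the paper: the exceptional set is taken to be the finite set of critical (branch) points of $f$ lying in $Z$, and the backward invariance $f^{-1}(Z)=Z$ is used exactly where you indicate to obtain $f(V\cap Z)=W\cap Z$. The paper's own proof simply cites \cite[Lemma~6.1]{BLM} for these considerations, whereas you have written them out explicitly; the content is the same.
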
 

For the definition of a quasiregular map and some related facts in a similar context see \cite[Section~2]{BLM}. The lemma implies that if  $f\: \Ch \ra \Ch$ is a rational map and its Julia set $\mathcal{J}(f)$ is a Sierpi\'nski carpet, then $f$ is admissible for $\mathcal{J}(f)$.

\begin{proof} The statement follows from \cite[Lemma 6.1]{BLM} and its proof. 
The considerations there imply that each point in $Z$ distinct from the finitely many  critical points of $f$ is a good point for $f$ and $Z$. In particular, $f$ is an admissible map for $Z$. 
\end{proof}

\begin{lemma}\label{lem:Tadm}
The Latt\`es map $T\: P\ra P$ is admissible for $D_p$. 
\end{lemma}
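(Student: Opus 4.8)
The plan is to verify the definition of admissibility for $T$ and $D_p$ directly. First I would recall that $T(D_p) = D_p$, which is stated in Section~\ref{s:lat}, so the forward-invariance requirement $T(D_p) \sub D_p$ holds. It remains to exhibit an exceptional set $E \sub D_p$, contained in a union of a finite set and finitely many peripheral circles of $D_p$, such that every point of $D_p \setminus E$ is a good point for $T$ and $D_p$. The natural candidate for $E$ is the set of $0$-vertices (the four corners of the pillow) together with the four $0$-edges, i.e., $E = O$, the outer peripheral circle of $S_p$ (which is a peripheral circle of $D_p$), perhaps augmented by finitely many more edges and vertices to account for the branching of $T$. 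Actually, since $T$ is a Latt\`es map whose branch points are exactly the $0$-vertices (the corners of $P$), and $T$ maps the $1$-skeleton to the $0$-skeleton, I would take $E$ to be the union of all $1$-edges together with the $1$-vertices — but this is not contained in finitely many peripheral circles. A cleaner choice: $E$ is the image under $T$ and its iterates is not needed; we only need a single exceptional set, so take $E$ to be the union of the finitely many $0$-edges and $0$-vertices, i.e.\ the edges and corners of the two faces $Q$ and $Q'$. Each $0$-edge lies on $O$, so $E \sub O$, which is one peripheral circle plus a finite set — admissible.

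Next I would check that each $x \in D_p \setminus E$ is a good point. Fix such an $x$. Since $x$ lies in the interior of some face, say $x \in \inte(Q)$, and $x$ is not on any $0$-edge, there is a small open Jordan region $V$ around $x$ with $\overline V \sub \inte(Q)$ and $V$ not containing any branch point of $T$ (the branch points are the $0$-vertices, which lie in $E$). On such a $V$ the map $T$ restricts to an orientation-preserving conformal homeomorphism onto a Jordan region $W = T(V)$; shrinking $V$ if necessary we may assume $W$ lies in the interior of $Q$ or $Q'$. Because $T$ is a local similarity (scaling by $p$) on the interior of each face, $T|V$ is bi-Lipschitz, hence quasisymmetric. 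The key point is then $T(V \cap D_p) = W \cap D_p$: this follows from the self-similar structure of $D_p$ and the fact that $T$ restricted to each $n$-tile is a color-preserving similarity carrying the scaled copy $D_p \cap X^n$ of $S_p$ onto $S_p$ (or $S_p'$), as recorded in Section~\ref{s:lat}. Concretely, choosing $V$ small enough that $V$ meets only $n$-tiles contained in a single $(n-1)$-tile on which $T^{n-1}$ and hence $T$ acts as a similarity respecting the carpet structure, the equality $T(V \cap D_p) = W \cap D_p$ is immediate. This shows $x$ is a good point.

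The main obstacle I anticipate is handling points of $D_p$ that lie on $0$-edges or at $0$-vertices but whose orbit structure is delicate — precisely the points where $T$ is not locally a similarity in a neighborhood that respects the carpet, or where $T$ is ramified. Putting all $0$-edges and $0$-vertices into $E$ sidesteps this, but I should double-check that $O$ (together with finitely many points) genuinely suffices, i.e.\ that \emph{no} point in the interior of a face needs to be excluded. Since the only branch points of $T$ are the corners of $P$ (each corner of $P$ is common to an even number of subsquares, guaranteeing the Schwarz-reflection extension, and $T$ has local degree $2$ there) and these are $0$-vertices, interior points of faces are never branch points, so indeed $E = O \cup \{\text{finite set}\}$ works; in fact $E = O$ already contains the four corners of $Q$, and one only needs to add the four extra corners coming from $Q'$, still a finite set. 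Finally I would remark that, by the same reasoning applied to the inverse branches $T^{-n}$ defined in Section~\ref{s:lat}, analogous admissibility-type statements hold, though that is not needed for the present lemma. This completes the proof sketch; the write-up is short and the only care needed is in the explicit choice of the small Jordan neighborhood $V$ respecting tile boundaries.
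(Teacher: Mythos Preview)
There is a genuine gap: your exceptional set $E=O$ is too small, for two independent reasons.

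First, you misidentify the critical points of $T$. They are not the corners of $P$ but the $1$-vertices (the corners of the $1$-tiles). At an interior $1$-vertex such as $(1/p,1/p)\in\inte(Q)$ four $1$-tiles meet and $T$ wraps their union twice around a corner of $P$, so $T$ has local degree $2$ there; the four corners of $P$ are in fact \emph{not} critical for $T$. Since $T$ is not locally injective at a critical point, no neighborhood $V$ can make it a homeomorphism onto $T(V)$, so these interior $1$-vertices are never good points and must lie in $E$.

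Second---and this is the more conceptual issue, coming precisely from the failure of \emph{backward} invariance of $D_p$ under $T$---points on the middle peripheral circle $M$ of $S_p$ (and its back copy $M'$) are not good points either. If $x$ lies in the interior of an edge of $M$, any small neighborhood $V$ of $x$ meets a good $1$-tile $X$ on one side and the removed middle square on the other. The image $W=T(V)$ is then a neighborhood of a point on $O$, so $W\cap D_p$ meets both $S_p$ and $S_p'$. But $V\cap D_p$ lies entirely in $X$ (the bad tile contributes nothing), and $T$ sends $X$ into only one face of $P$; hence $T(V\cap D_p)$ covers only one of the two ``halves'' of $W\cap D_p$, and the required equality $T(V\cap D_p)=W\cap D_p$ fails. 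Your appeal to self-similarity for this equality is exactly where the argument breaks.

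The paper accordingly takes $E=F\cup M\cup M'$ with $F$ the set of all $1$-vertices, and then checks separately the cases $x\in\inte(X)$ for a good $1$-tile $X$, and $x$ on a $1$-edge shared by two \emph{good} $1$-tiles.
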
 

\begin{proof} We know that $T$ is a branched covering map and that $T(D_p)\sub D_p$. So we have to find an exceptional set for $T$ and the Sier\-pi\'n\-ski carpet $D_p$. 

Let $M$ be the {\em middle}  peripheral circle of $S_p$, i.e., $M$ is the boundary of the first square (of side length $1/p$) that was removed from $Q$ in the construction of $S_p$. 
Let $M'$ be the corresponding peripheral circle in the back copy $S'_p$, and $F$ be the finite set consisting of $1$-vertices, i.e., the corners of all squares that arise in the natural subdivision of $Q$ and $Q'$ into squares of side length $1/p$.  Then $F$ contains all critical points of $T$ (and actually four non-critical points of $T$, namely the  four corners of $P$). 

We claim that $E\coloneqq F\cup M\cup M'$ is an exceptional set for $T$. To see this, let $x\in D_p\setminus E$ be arbitrary. Then there exists a good $1$-tile $X$ with $x\in X$. We will assume that $X$ is white (if $X$ is black, the argument is completely analogous). We now consider two cases. 

\smallskip {\em Case 1:}  $x\in \inte(X)$. Since $X$ is white, $T|X$ is a homeomorphism from 
$X$ to $Q$. Actually, $T|X$  is a quasisymmetry, because on $X$ the map behaves like a similarity scaling distances by the factor $p$.   Then $U=\inte(X)$ and $V=\inte(Q)$ are Jordan regions 
and $T$ is quasisymmetry from $U$ onto $V$. Since $X$ is a good $1$-tile, we also have 
$T(X\cap D_p)= Q\cap D_p$ which implies that $T(U\cap D_p)=V\cap D_p$. Hence $x$ is a good point for $T$. 

 \smallskip {\em Case 2:}  $x\in \partial X$. Since $x$ does not lie in $E\supseteq F$, this point belongs to the  boundary of $X$, but is  not a corner of the square $X$. Hence there exists a unique side $e\sub \partial X$ of $X$ with $x\in e$. Moreover, since $x\not\in E \supseteq M\cup M'$, 
 the side $e$ is not contained in $M\cup M'$. Hence there exists  a unique good $1$-tile $Y\ne X$ that shares the side  $e$ with  $X$. Since $X$ is white, $Y$ is black. Let $\inte(e)$ be the set of interior points of the closed arc $e$, i.e., $e$ with its two endpoints removed. Then $x\in \inte(e)$. Moreover, 
 $$ U'\coloneqq  \inte(X) \cup  \inte(e) \cup  \inte(Y)$$ 
 is a simply connected region with $x\in U'$ that is mapped by $T$ homeomorphically onto the simply connected region 
 $$V'= \inte(Q) \cup  \inte(\widetilde e) \cup  \inte(Q').$$
 Here $\widetilde e\coloneqq T(e)$ is a common side of $Q$ and  $Q'$. We have $T(U'\cap D_p)=V'\cap D_p$, because 
 $X$ and $Y$ are good $1$-tiles. Moreover, $T|U'$ scales lengths of paths in $U'$ by the factor $p$, i.e., 
 $$\length(T\circ \ga)=p\cdot\length (\ga),$$
 whenever $\ga$ is a path in $U'$. The metric  on $P$ is a geodesic metric. 
 So these considerations imply that if  $r>0$ is sufficiently small, then 
  the open ball $U\coloneqq B(x,r)$ is a Jordan region contained in $U'$ and $T$ is a quasisymmetry of $U$ onto the Jordan region $V\coloneqq B(T(x), pr)$ such that $T(U\cap D_p)=V\cap D_p$. Hence $x$ is a good point for $T$.  
  
\smallskip Since Cases 1 and 2 exhaust all possibilities, every point $x\in D_p\setminus E$ is a good point for $T$. The statement follows.
 \end{proof}

\begin{lemma}\label{lem:pullperi}
 Let $f\: S^2\ra S^2$ be a branched covering map that is an admissible map for the Sier\-pi\'n\-ski carpet $Z\sub S^2$, and let $J\sub Z$ be a peripheral circle of $Z$. Then $f^{-1}(J) \cap Z$ is contained in a union of finitely many peripheral circles of $Z$.  
\end{lemma}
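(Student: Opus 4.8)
The plan is to exploit the fact that $f$ is a branched covering map, hence a finite-to-one, open, closed, proper map, so $f^{-1}(J)$ is a compact subset of $S^2$ of ``dimension one'' in the appropriate sense; the real point is to control how it meets the carpet $Z$. First I would fix an exceptional set $E$ for $f$, so that $E$ is contained in the union of a finite set $F_0$ and finitely many peripheral circles $C_1,\dots,C_k$ of $Z$, and every point of $Z\setminus E$ is a good point for $f$ and $Z$. I would then look at an arbitrary point $x\in (f^{-1}(J)\cap Z)\setminus E$. Since $x$ is good, there is a Jordan region $V\ni x$ with $W=f(V)$ a Jordan region and $f|V\colon V\to W$ an orientation-preserving quasisymmetric homeomorphism carrying $V\cap Z$ onto $W\cap Z$. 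Because $f(x)\in J$ and $J$ is a peripheral circle of $Z$, after shrinking $V$ we may assume $W\cap J$ is a single arc $\alpha$ of $J$ through $f(x)$, and $\alpha$ locally separates $W\cap Z$ (a peripheral circle, being a Jordan curve whose complementary Jordan region misses $Z$, locally disconnects the carpet along itself). Pulling back by the homeomorphism $f|V$, the arc $(f|V)^{-1}(\alpha)$ is an arc through $x$ in $V\cap Z$ that locally separates $V\cap Z$.

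The key step is then a local-to-global argument identifying the connected pieces of $(f^{-1}(J)\cap Z)\setminus E$ with peripheral circles of $Z$. Concretely, let $N$ be a connected component of $(f^{-1}(J)\cap Z)\setminus E$. Around each of its points the set looks locally like an arc of $J$ pulled back through an orientation-preserving quasisymmetry, hence $N$ is a (possibly non-closed) $1$-manifold — a locally embedded arc or circle — that locally separates $Z$ at each of its points. I would argue that a connected subset of a Sierpi\'nski carpet $Z$ that locally disconnects $Z$ at each of its points must be contained in a single peripheral circle of $Z$: this is the standard structural fact that the only locally separating Jordan arcs/curves in a carpet lie on peripheral circles (used e.g.\ in the characterization ``$J$ is peripheral iff removing it does not disconnect $Z$'' quoted in the introduction; the corresponding local statement is what is needed). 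Hence each component $N$ of $(f^{-1}(J)\cap Z)\setminus E$ lies in a single peripheral circle $J_N$ of $Z$.

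It remains to bound the number of such peripheral circles $J_N$. Here I would use that $f$ is finite-to-one of some degree $d$, so $f^{-1}(\operatorname{int} J)$ (where $\operatorname{int} J$ denotes the bounded complementary Jordan region of $J$ in $S^2$) has at most $d$ connected components, each an open Jordan region whose boundary maps into $J$; each such component is disjoint from $Z$ (since $f(Z)\subseteq Z$ and $\operatorname{int} J$ misses $Z$) and its boundary contributes the arcs of $f^{-1}(J)$. A peripheral circle $J_N$ meeting $f^{-1}(J)$ in a nondegenerate arc must bound one of these finitely many complementary regions on one side, so there are at most finitely many such $J_N$ — say at most $N_0=N_0(d)$ of them. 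Finally, adjoining the finitely many peripheral circles $C_1,\dots,C_k$ from $E$, together with the observation that the finite part $F_0$ of $E$ is itself covered by finitely many peripheral circles (or can be absorbed, since finitely many points lie on finitely many peripheral circles of $Z$), we conclude that $f^{-1}(J)\cap Z$ is contained in a union of finitely many peripheral circles of $Z$, as claimed.

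**Expected main obstacle.** The delicate point is the structural claim that a connected subset of the carpet which locally separates $Z$ at each of its points is forced to lie on a single peripheral circle; making this precise requires care with the topology of Sierpi\'nski carpets (Whyburn's characterization and the behavior of peripheral circles) and with the fact that the local arcs at good points genuinely separate the carpet — one has to rule out, for instance, a pulled-back arc that passes through the interior of a complementary Jordan region of $Z$ and thus fails to lie on a peripheral circle, which is where orientation-preservation and $f(V\cap Z)=W\cap Z$ are used. Controlling the finitely-many count via the degree $d$ of $f$ is comparatively routine once the local picture is established.
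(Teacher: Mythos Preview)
Your proposal contains two genuine gaps.

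First, the ``local separation'' characterization of peripheral circles is backwards. A peripheral circle $J$ bounds a complementary Jordan region disjoint from $Z$, so in a small disk $W$ around a point of $J$ the carpet $W\cap Z$ lies entirely on \emph{one} side of the arc $\alpha=W\cap J$; removing $\alpha$ does \emph{not} disconnect $W\cap Z$. Conversely, an arc through a point of $Z\setminus A$ (where $A$ is the union of peripheral circles) has carpet on both sides. So your structural claim---that a connected set locally disconnecting $Z$ at each point lies on a single peripheral circle---is the opposite of what holds, and the argument built on it collapses. The correct local statement (which the paper uses) is that a point $x\in Z$ lies on a peripheral circle if and only if $x$ is accessible by a half-open path in $S^2\setminus Z$; this is what transfers cleanly under the local homeomorphism $f|V$ at a good point, giving: $x$ good and $f(x)\in A$ implies $x\in A$.

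Second, your treatment of the finite part $F_0$ of the exceptional set is a non-sequitur. You write that $F_0$ ``can be absorbed, since finitely many points lie on finitely many peripheral circles of $Z$,'' but a point $x_0\in F_0\cap f^{-1}(J)$ lying in $Z\setminus A$ cannot be absorbed into any peripheral circle at all, and nothing in your outline excludes this. The paper closes this gap with a continuum argument: each connected component $K$ of $f^{-1}(J)$ is a non-degenerate continuum (since $f(K)=J$), and one shows via a Moore-decomposition collapse of the complementary disks that if $K\cap(Z\setminus A)$ is nonempty (or if $K$ meets two distinct peripheral circles) then $K\cap(Z\setminus A)$ is uncountable. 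Since $K\cap(Z\setminus A)$ can contain only the finitely many non-good points, it must be empty, and $K\cap Z$ lies in a single peripheral circle. Finiteness then follows simply because $f$ is finite-to-one, so $f^{-1}(J)$ has only finitely many components---no need to analyze $f^{-1}(\operatorname{int} J)$ or match complementary regions as you attempt.
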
 

This implies that if $E$ is an exceptional set for $f$, then $f^{-1}(E)\cap Z$ is contained in a union of a finite set and finitely many peripheral circles of $Z$.

\begin{proof} Let $A\sub Z$ be the union of all peripheral circles of $Z$. Then $A$ consist precisely of those points in $Z$ that are accessible by a (half-open) path contained in the complement of $Z$. This characterization of the points in $A$ together with the definition of a good point implies that if  $x\in Z$ is  a good point for $f$, then $x\in A$ if and only if $f(x)\in A$.  

We also need the following topological fact: if $K$ is a non-degenerate continuum (i.e., a compact connected set consisting of more than one point) and if $K$ meets a point in $Z\setminus A$ or two distinct peripheral circles of $Z$, then $K\cap (Z\setminus A)$ is an uncountable set. To see this, we collapse the closure of each  complementary component of $Z$ to a point. Then by Moore's theorem (see \cite[Theorem~13.8]{BoMy}) the quotient space obtained in this way is also a topological $2$-sphere. The image $K'$  of $K$ under the quotient map is also a compact and connected set. 
The assumptions on $K$ imply that $K'$ contains more than one point, and is hence a non-degenerate   continuum. This implies that $K'$  is an uncountable set. In particular, $K'$  will contain uncountably  many points distinct from the countably many points obtained by collapsing the  complementary components of $Z$. It follows  that $K\cap (Z\setminus A)$ 
is uncountable, as desired. 

Now let  $K$ be a connected component of $f^{-1}(J)$. Then $f(K)=J$ (this follows from  a general  fact for open and continuous  maps---see \cite[Lemma~13.13]{BoMy}; since $J$ is a Jordan curve, one can also give a simple direct argument based on path lifting).   Since $f$ is finite-to-one, it follows that there are only finitely many such components $K$ of  $f^{-1}(J)$. Each of these components $K$ is a non-degenerate continuum. 

Let $x\in Z\setminus A$ be a good point of $f$. Then $f(x)\in Z\setminus A\subseteq Z\setminus J$ by what we have seen in the beginning of the proof. In particular, $x\not \in K\subseteq f^{-1}(J)$.
Since every point in $Z\setminus A$ is a good point with finitely many exceptions, the set 
$K\cap  (Z\setminus A)$ is finite. But then actually $K\cap  (Z\setminus A)=\emptyset$, because  
otherwise  $K\cap  (Z\setminus A)$ would be  uncountable.  So $K\cap Z\sub A$. This implies  that  $K\cap Z$ is contained in a single peripheral
circle of $Z$ (or is empty), because if  $K\cap Z$ met two distinct peripheral circles, then  
$K\cap  (Z\setminus A)$ would again be an uncountable set.  

We have seen that the intersection of each of the finitely many components of $f^{-1}(J)$ with $Z$ lies in a single peripheral of $Z$. The statement follows. 
\end{proof} 

\begin{lemma}\label{lem:compadm}
Let $f,g\: S^2\ra S^2$ be two branched covering maps that are admissible maps for the Sier\-pi\'n\-ski carpet $Z\sub S^2$. Then $f\circ g$ is also admissible for $Z$.  
\end{lemma}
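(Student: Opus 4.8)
The plan is to manufacture an explicit exceptional set for $f\circ g$ out of exceptional sets for $f$ and $g$. Fix an exceptional set $E_f$ for $f$ and an exceptional set $E_g$ for $g$, and put
$$ E \coloneqq E_g \cup \bigl( g^{-1}(E_f)\cap Z\bigr). $$
First I would dispose of the easy requirements in the definition of admissibility. The composition of two branched covering maps is again a branched covering map (see \cite[Chapter~2]{BoMy}), so $f\circ g\colon S^2\to S^2$ is a branched covering map; and $(f\circ g)(Z)=f(g(Z))\subseteq f(Z)\subseteq Z$. Moreover, $E$ is contained in a union of a finite set and finitely many peripheral circles of $Z$: this holds for $E_g$ because $g$ is admissible, and for $g^{-1}(E_f)\cap Z$ by the observation recorded right after Lemma~\ref{lem:pullperi} (applied to the admissible map $g$ and its exceptional set $E_f$), and the union of two sets of this form is again of this form. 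It then remains to show that every point of $Z\setminus E$ is a good point for $f\circ g$ and $Z$.

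So fix $x\in Z\setminus E$. Since $x\notin E_g$, there is a Jordan region $V$ with $x\in V$ such that $W\coloneqq g(V)$ is a Jordan region and $g|V\colon V\to W$ is an orientation-preserving quasisymmetric homeomorphism with $g(V\cap Z)=W\cap Z$. Since $g(Z)\subseteq Z$ we have $g(x)\in Z$, and since $x\in Z$ but $x\notin g^{-1}(E_f)$ we have $g(x)\notin E_f$; hence $g(x)\in Z\setminus E_f$, so there is a Jordan region $V'$ with $g(x)\in V'$ such that $W'\coloneqq f(V')$ is a Jordan region and $f|V'\colon V'\to W'$ is an orientation-preserving quasisymmetric homeomorphism with $f(V'\cap Z)=W'\cap Z$. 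The regions $W$ and $V'$ need not be nested, so I would shrink: choose a Jordan region $\widetilde V$ with $x\in\widetilde V$ and $\overline{\widetilde V}\subseteq V\cap g^{-1}(V')$, which is possible since the right-hand side is an open neighborhood of $x$ in the surface $S^2$. Since $g|V$ is a homeomorphism onto $W$ and $\overline{\widetilde V}$ is a compact subset of $V$, the set $\widetilde W\coloneqq g(\widetilde V)$ is a Jordan region with $g(x)\in\widetilde W$ and $\overline{\widetilde W}=g(\overline{\widetilde V})\subseteq g(g^{-1}(V'))\subseteq V'$; likewise $W''\coloneqq f(\widetilde W)$ is a Jordan region, and $W''\subseteq W'$.

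Finally I would verify that $(f\circ g)|\widetilde V\colon\widetilde V\to W''$ witnesses that $x$ is a good point for $f\circ g$ and $Z$. It factors as $(f|\widetilde W)\circ(g|\widetilde V)$, with $g|\widetilde V\colon\widetilde V\to\widetilde W$ and $f|\widetilde W\colon\widetilde W\to W''$. Each factor is a homeomorphism; each factor is orientation-preserving, being the restriction to an open subset of an orientation-preserving homeomorphism between open subsets of $S^2$; and each factor is a quasisymmetry, since the restriction of a quasisymmetry to a subset is a quasisymmetry and compositions of quasisymmetries are quasisymmetries. Hence $(f\circ g)|\widetilde V$ is an orientation-preserving quasisymmetric homeomorphism onto $W''$. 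For the carpet relation, injectivity of $g|V$ and $f|V'$ gives $g(\widetilde V\cap Z)=g(\widetilde V)\cap g(V\cap Z)=\widetilde W\cap(W\cap Z)=\widetilde W\cap Z$ (using $\widetilde W\subseteq W$) and then $f(\widetilde W\cap Z)=f(\widetilde W)\cap f(V'\cap Z)=W''\cap(W'\cap Z)=W''\cap Z$ (using $W''\subseteq W'$), so $(f\circ g)(\widetilde V\cap Z)=W''\cap Z$. Thus $x$ is a good point, $E$ is an exceptional set for $f\circ g$, and $f\circ g$ is admissible for $Z$. The only mildly delicate step is the shrinking: arranging the nested Jordan regions $\widetilde V\subseteq V$, $\widetilde W\subseteq V'$, $W''\subseteq W'$ with closures inside the correct open sets, so that homeomorphic images of Jordan regions are again Jordan regions; everything else is formal.
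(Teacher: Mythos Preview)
Your proof is correct and follows essentially the same route as the paper: both take $E_g\cup(g^{-1}(E_f)\cap Z)$ as the exceptional set for $f\circ g$, invoke the remark after Lemma~\ref{lem:pullperi} to see this set has the right form, and then compose the two local quasisymmetries after a shrinking step. You simply spell out the shrinking and the verification of $(f\circ g)(\widetilde V\cap Z)=W''\cap Z$ in more detail than the paper, which leaves these as ``by possibly shrinking the regions if necessary.''
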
 

\begin{proof} As a composition of two branched covering maps, $h:=f\circ g$ is also a branched covering map on $S^2$. Moreover, we have $h(Z)\sub Z$. 

Let $E$ be an  exceptional set for $f$, and $E'$ be an  exceptional set for $g$. Then by the remark after Lemma~\ref{lem:pullperi} we know that $f^{-1}(E)\cap Z$ is contained   in a union of a finite set and finitely many peripheral circles of $Z$. The same is then true for  $(E'\cup f^{-1}(E))\cap Z$. So to finish the proof, it is enough to show that each point $x\in Z\setminus (E'\cup f^{-1}(E))$ is a good point for $h$. 

By our assumptions $x\in Z\setminus E'$ is a good point for $g$, and $y\coloneqq g(x)\in Z\setminus E$ is a good point for $f$. By possibly shrinking the regions in the definition of a good point if necessary, we can find Jordan regions $U,V,W\sub S^2$ with the following properties:   $x\in U$ and  $y\in V$,  the map $g$
is a quasisymmetry from $U$ onto $V$, the map $f$ is a quasisymmetry from $V$ onto $W$, and  we have $g(U\cap Z)= V\cap Z$ and $f(V\cap Z)=W\cap Z$. Then $h=f\circ g$ is a quasisymmetry from $U$ onto $W$ and $h(U\cap Z)=W\cap Z$. This show that $x$ is a good point for $h$, as desired.  
\end{proof} 

\begin{lemma}\label{lem:Tpowers}
Let $k,n\in \N_0$ and $\xi\: P\ra P$ be a quasisymmetry with $\xi(D_p)=D_p$.
Then the map $f\coloneqq \xi^{-1}\circ T^n\circ \xi \circ T^k$
is admissible for $D_p$. 
\end{lemma}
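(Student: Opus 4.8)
The plan is to build $f = \xi^{-1}\circ T^n\circ \xi\circ T^k$ out of admissible pieces and invoke the composition lemma (Lemma~\ref{lem:compadm}) repeatedly. First I would observe that $f$ is a composition of four branched covering maps of $P$, hence itself a branched covering map, and that $f(D_p)\subseteq D_p$: indeed $T^k(D_p)\subseteq D_p$, then $\xi(D_p)=D_p$, then $T^n(D_p)\subseteq D_p$, then $\xi^{-1}(D_p)=D_p$. So the only real issue is exhibiting an exceptional set, and by Lemma~\ref{lem:compadm} it suffices to check that each of the four factors is admissible for $D_p$.

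The four factors are $T^k$, $\xi$, $T^n$, and $\xi^{-1}$. The map $T$ is admissible for $D_p$ by Lemma~\ref{lem:Tadm}, and $T^k$, $T^n$ are admissible as iterates: I would note that $T^k = T\circ\cdots\circ T$ ($k$ factors) for $k\ge 1$, so admissibility follows from Lemma~\ref{lem:compadm} by induction, while for $k=0$ the map $T^0=\id$ is trivially admissible (take $E=\emptyset$; every point is a good point via $V=W=S^2$, or any small Jordan region, with the identity map). The same applies to $T^n$. For $\xi$ itself: $\xi\colon P\to P$ is a quasisymmetric homeomorphism with $\xi(D_p)=D_p$, hence in particular a branched covering map (a homeomorphism) with $\xi(D_p)\subseteq D_p$; every point $x\in D_p$ is a good point for $\xi$ and $D_p$, since one can take $V$ to be any small Jordan region around $x$ and $W=\xi(V)$, using that $\xi$ is an (orientation-preserving, after possibly noting this or passing to the relevant case) quasisymmetry with $\xi(V\cap D_p)=W\cap D_p$; thus $E=\emptyset$ works. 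The same argument applies verbatim to $\xi^{-1}$, which is again a quasisymmetry of $P$ with $\xi^{-1}(D_p)=D_p$.

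With all four factors shown admissible, three applications of Lemma~\ref{lem:compadm} give that $f = \xi^{-1}\circ T^n\circ\xi\circ T^k$ is admissible for $D_p$, completing the proof. The one point that requires a little care is the orientation-preserving clause in the definition of a good point: I would either restrict attention to the orientation-preserving case of $\xi$ (arguing that one may reduce to it, or that the relevant statements are insensitive to this), or simply note that composing two orientation-reversing quasisymmetries yields an orientation-preserving one, so in all cases the factors can be grouped to make the composition orientation-preserving on the relevant Jordan regions; since the pieces $T^k, T^n$ are themselves compositions whose orientation behavior is controlled, this bookkeeping is routine. That orientation bookkeeping, together with the mild point that $\id$ must be checked to be admissible so that the $k=0$ or $n=0$ cases are covered, is the only obstacle, and it is minor; the substance is entirely carried by Lemmas~\ref{lem:Tadm} and~\ref{lem:compadm}.
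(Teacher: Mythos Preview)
Your approach coincides with the paper's and is complete when $\xi$ is orientation-preserving. The gap is in the orientation-reversing case, which you flag but do not resolve: if $\xi$ reverses orientation then $\xi$ is \emph{not} a branched covering map under the definition used here (the paper notes this explicitly right after the lemma), so $\xi$ and $\xi^{-1}$ are not admissible and Lemma~\ref{lem:compadm} cannot be applied with either as a standalone factor. Your four-factor decomposition therefore fails as stated, and one cannot simply ``reduce to the orientation-preserving case'' since the lemma carries no orientation hypothesis.

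Your suggested fix of grouping is exactly what the paper does, but it requires an argument you omit. The paper treats the conjugate $\xi^{-1}\circ T^n\circ\xi$ as a single map: conjugation by \emph{any} homeomorphism preserves the class of branched covering maps (the orientation flips cancel), so this conjugate is a branched covering map regardless of the orientation of $\xi$. The remaining point---which your sketch does not mention---is that $\xi$, being a homeomorphism of $D_p$, induces a bijection on the peripheral circles of $D_p$; hence if $E$ is an exceptional set for $T^n$, then $\xi^{-1}(E)$ is again a finite set together with finitely many peripheral circles, and serves as an exceptional set for the conjugate (the good-point condition transfers since the local conjugate of an orientation-preserving quasisymmetry is again orientation-preserving). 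With $\xi^{-1}\circ T^n\circ\xi$ thus shown admissible, one application of Lemma~\ref{lem:compadm} with $T^k$ finishes the proof.
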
 

Note that if the homeomorphism  $\xi$ reverses orientation, then it is  not a branched covering map according to the definition given in \cite[Section~2.1]{BoMy}. Conjugation by $\xi$ still preserves the class of branched covering maps.  

\begin{proof} It is clear that $f$ is a branched covering map with $f(D_p)\sub D_p$. Moreover, it follows from Lemma~\ref{lem:Tadm} and repeated application of Lemma~\ref{lem:compadm} that the maps $T^n$ and $T^k$ are admissible for $D_p$. It is also clear that conjugation of $T^n$ by $\xi$ leads to a
branched covering map  $\xi^{-1}\circ T^n\circ \xi$  that is admissible 
for $D_p$, because $\xi$ induces a bijection on the peripheral circles of $D_p$. The statement  now follows from another  application of   Lemma~\ref{lem:compadm}. 
\end{proof}

\section{Schottky maps}\label{sec:Sch}

A \emph{relative Schottky set} $S$ in a  region $D\sub \Ch$ is a subset of $D$ whose complement in $D$ is a union of open geometric disks whose closures are contained in $D$ and are pairwise disjoint. The boundaries of these disks are called the {\em peripheral circles} of $S$. A relative Schottky set in  $D=\Ch$ is called a \emph{Schottky set}. 

Let $S$ be  a relative Schottky set and $U\sub \Ch$  be an open set. A  map $f\: U\cap S\to \Ch$ is called \emph{conformal} at a point $z_0\in U\cap S$ if the \emph{derivative} of $f$ at  $z_0$,
$$
f'(z_0)=\lim_{z\in U\cap S,\, z\to z_0}\frac{f(z)-f(z_0)}{z-z_0},
$$
exists and is non-zero. If $z_0=\infty$ or $f(z_0)=\infty$, one has to interpret this in suitable charts on $\Ch$. In order to avoid this technicality, in the following we will only consider relative Schottky sets $S$ that do not contain $\infty$ and so $S\sub \C$.

Let $S, \widetilde S\sub \C$ be two relative Schottky sets,  $U\sub \Ch$ be an open set, and $f\: U\cap S\to \widetilde S$ be a local homeomorphism. Such a map $f$ is called a \emph{Schottky map} if it is conformal at every point of $U\cap S$ and its derivative is a continuous function on $U\cap S$.

Under some mild additional assumptions quasisymmetries on relative Schottky sets are Schottky maps. More precisely, the following statement is true.
\begin{theorem} \label{thm:Smap} Let $S\sub \C$ be  a relative  Schottky set of measure zero. Suppose  $U\sub \Ch$ is open and  
 $f\: U\ra \Ch$ is  a continuous map with $f(U\cap S)\sub S$ such 
that each  point $x\in U\cap S$ is a good point for $f$ and $S$. Then 
$f|U\cap S\: U\cap S\ra S$ 
 is a Schottky map.  
\end{theorem}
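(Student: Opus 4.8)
The plan is to reduce the statement to the rigidity results of the second author for quasisymmetric maps between relative Schottky sets, for which conformality on a Schottky set is a known consequence of quasisymmetry. First I would observe that the hypothesis that each point $x\in U\cap S$ is a good point for $f$ and $S$ is exactly a local statement: for each $x$ there is a Jordan region $V_x\ni x$ on which $f$ restricts to an orientation-preserving quasisymmetric homeomorphism $V_x\to W_x$ with $f(V_x\cap S)=W_x\cap S$. Since conformality and continuity of the derivative are local properties, it suffices to prove that each such restriction $f|V_x\cap S$ is a Schottky map, and then patch. The patching is harmless: derivatives computed from different overlapping charts agree because the difference quotient defining $f'(z_0)$ only involves points near $z_0$, and the distortion functions of the local quasisymmetries, while possibly varying, still force a genuine (chart-independent) derivative at each point.

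So the core is the following local claim: if $V,W\subset\Ch$ are Jordan regions, $f\colon V\to W$ is an orientation-preserving quasisymmetric homeomorphism with $f(V\cap S)=W\cap S$, and $S$ has measure zero, then $f|V\cap S$ is conformal at every point of $V\cap S$ with continuous nonvanishing derivative. The key step here is to invoke the measure-zero hypothesis: a relative Schottky set of measure zero is \emph{porous} in a suitable sense, or at least has the property that its complementary disks accumulate densely, and this is precisely the setting in which the second author proved (in \cite{Me1,Me2}) that quasisymmetric homeomorphisms are conformal. Concretely, one extends $f|V\cap S$ across the peripheral disks inside $V$ by reflecting (each peripheral circle of $S$ inside $V$ maps to a peripheral circle of $S$, so one obtains a quasiconformal map of a full neighborhood after Schwarz-type reflection, or one applies the measurable Riemann mapping theorem with the dilatation supported on the measure-zero set $S$, hence zero a.e.), thereby realizing $f$ near $x$ as the restriction to $S$ of a conformal (Möbius or holomorphic) map. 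Differentiability and continuity of the derivative on $V\cap S$ then follow from the corresponding properties of this holomorphic extension, and nonvanishing follows because $f$ is a homeomorphism, so the extension cannot be critical at points of $S$ (a critical point would collapse a neighborhood two-to-one, contradicting injectivity of $f$ on $S$ near that point — here one uses that points of $S$ are not isolated).

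The main obstacle I expect is the extension/reflection argument in the case where the point $x$ lies on a peripheral circle of $S$ rather than in its "interior part," and more fundamentally the passage from the purely metric hypothesis (quasisymmetry) to an analytic conclusion (existence of a complex derivative). This is genuinely nontrivial and is the heart of the Schottky rigidity theory: it requires either the measure-zero hypothesis to kill the Beltrami coefficient of a quasiconformal extension, or a direct geometric argument controlling the distortion of $f$ on shrinking balls centered at $x$ using the density of complementary disks. I would therefore structure the proof to cite \cite{Me1} (or \cite{Me2}) for exactly this local conformality statement — indeed the theorem as stated is essentially a repackaging of that work in the "good point" language of this paper — and devote the written proof mainly to the bookkeeping: verifying that the good-point data supplies the hypotheses of the cited rigidity theorem at each point, checking orientation-preservation is inherited, and confirming that the locally defined derivatives glue to a globally well-defined continuous nonvanishing function on $U\cap S$.
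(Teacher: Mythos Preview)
Your high-level strategy matches the paper's: reduce to a local statement near each good point and invoke the rigidity result \cite[Theorem~1.2]{Me1}. However, you have not identified the precise hypothesis of that theorem, and this is where the actual work lies. The cited result requires the Jordan region $V$ to satisfy $\partial V\subset S$; the good-point data only hands you \emph{some} Jordan region $V$ containing $x$ on which $f$ is a quasisymmetry with $f(V\cap S)=W\cap S$, and there is no reason $\partial V$ should lie in $S$. The paper's proof is essentially devoted to arranging this: when $x$ does not lie on a peripheral circle one can shrink $V$ so that $\partial V\subset S$ (this uses the structure of a Sierpi\'nski carpet, cf.\ the proof of \cite[Lemma~6.1]{BLM}); when $x$ lies on a peripheral circle $\partial B$ one cannot do this directly, and the paper instead doubles $S$ by reflection in $\partial B$ to obtain a new Schottky set $\widetilde S$ for which $\partial B$ is no longer peripheral, extends $f$ across $B$ by Schwarz reflection, and then applies the previous case to $\widetilde S$. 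You flagged the peripheral-circle case as the obstacle but did not propose this doubling resolution.

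Separately, your parenthetical sketch of the mechanism behind \cite{Me1} contains an error. Extending $f|V\cap S$ across the complementary disks by reflection does produce a quasiconformal map of a full region, but its Beltrami coefficient is \emph{not} supported on $S$: the reflected pieces inside the disks carry nontrivial dilatation inherited from $f$ outside. So the ``dilatation supported on a measure-zero set, hence conformal'' argument does not go through as written. Since you intend to cite \cite{Me1} rather than reprove it, this does not break your proof, but you should drop that heuristic and simply quote the result once you have arranged $\partial V\subset S$.
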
 

\begin{proof} A special case of this statement immediately follows from  \cite[Theorem~1.2]{Me1}. Namely, if $U\sub \C$ is a Jordan region 
with partial $\partial U\sub S$ and $f$ is an orientation-preserving quasisymmetry from $U$ onto $f(U)$ with $f(U\cap S)=f(U)\cap S$, then $f|U\cap S\: U\cap S\ra S$ is a Schottky map. 

 In the general case, it is enough to show that $f|U\cap S$ is a Schottky map locally near each point $x\in U\cap S$.  We can  reduce this to the   special case,  because $x$ is a good point for $f$ and $S$. The details of the argument are very similar to the proof of  Lemma~6.1 in \cite{BLM} and so we will only give an outline. 
 
 By our assumptions for each $x\in U\cap S$ we can find Jordan regions $V,W\sub \Ch$  with $x\in V\sub U$ such that $f|V$ is an orientation-preserving quasisymmetry of $V$ onto $W$ with 
 $f(V\cap S)=W\cap S$. We would be done if  $\partial V\sub S$.
 
 Now, if $x$ does not lie on a peripheral circle, then one can shrink $V$ suitably so that $\partial V\sub S$ (see the proof of Lemma~6.1 in \cite{BLM} for the details). 
 
 For the remaining case, suppose $x$ lies on a peripheral circle of $S$. Then $x\in \partial B\sub S$,  where $B$ is one of the complementary disks of $S$. Then one doubles the Schottky set $S$ by reflection in $C=\partial B$ to obtain a new Schottky set $\widetilde S$ that does not have $C$ as a peripheral circle. By a Schwarz reflection procedure one modifies the map $f$ 
 in $B$ to obtain a map $\widetilde f$ that agrees with $f$ in the complement of $B$ near $x$. One can then find Jordan regions 
 $V,W\sub \Ch$ such that $x\in V$, $\partial V\sub \widetilde S$, and 
 $\widetilde f$ is an orientation-preserving quasisymmetry from 
 $V$ onto $W$ with $\widetilde f(V\cap \widetilde S)= W\cap \widetilde S$. This implies that  $\widetilde f$ is a Schottky map
 $V\cap \widetilde S \ra \widetilde S$. By construction $f$ and $\widetilde f$ agree 
 on $V\cap S= (V\cap \widetilde S)\setminus B$ and map it into $S$. Hence $f|U\cap S$ is a Schottky map into $S$  near $x\in V\cap S$ as desired.  
 \end{proof}

We require the  following stabilization result. 

\begin{theorem} \label{thm:stabil}
Let   $S\sub \C$ be  a locally porous relative Schottky set,  $a\in S$,  $U\sub \Ch$ be an open neighborhood  of $a$ such that $U\cap S$ is connected,  and    $u \: U\cap S\to S$ be  a Schottky map with $u(a) = a$ that is not equal to the identity on 
$U\cap S$. 
For $n\in \N$ let $h_n \: U\cap S  \to
 S$ be a Schottky map such that for some open set $U_n\sub \Ch$ the map $h_n\: U\cap S\ra U_n\cap S$ is a homeomorphism. 

Suppose the sequence $\{h_n\}$ converges locally uniformly on $U\cap S$ 
to a homeomorphism $h\: U\cap S \to \widetilde U\cap S$, where $ \widetilde U\sub \Ch$ is an open set. Then
there exists $N \in \N$ such that $h_n = h$ on $U\cap S$ for all $n \ge N$.
\end{theorem}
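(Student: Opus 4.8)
The plan is to argue by contradiction, reducing to a sequence of Schottky homeomorphisms near $a$ that tend to the identity without being the identity, and then to ``blow up'' near $a$ -- using the non-trivial self-map $u$ as a magnification engine -- to produce a limiting Schottky self-map that is close to, but distinct from, the identity, which the rigidity theory for Schottky maps forbids. So suppose the conclusion fails; after passing to a subsequence we may assume $h_n\neq h$ for every $n$.

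First I would fix the local picture at $a$. The locally uniform limit $h$ of the Schottky homeomorphisms $h_n$ is itself a Schottky homeomorphism, and so is $h^{-1}$ -- this is a closure property of the class of Schottky maps (\cite{Me1,Me2}; one may also verify it locally via Theorem~\ref{thm:Smap} after composing with a reflection if $h$ reverses orientation). Hence $\phi_n:=h^{-1}\circ h_n$ is a Schottky homeomorphism, defined on some neighborhood $\Om$ of $a$ in $U\cap S$ for all large $n$, onto a relatively open subset of $S$, and $\phi_n\to\id$ locally uniformly on $\Om$. Moreover $\phi_n$ cannot equal $\id$ on any non-empty relatively open subset of $\Om$: otherwise $h_n=h$ there, and the identity principle for Schottky maps on the connected set $U\cap S$ (\cite{Me1,Me2}) would give $h_n=h$ everywhere. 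Thus it suffices to derive a contradiction from a sequence $\{\phi_n\}$ of Schottky homeomorphisms near $a$ with $\phi_n\to\id$ and $\phi_n\not\equiv\id$.

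Next comes the renormalization. By the rigidity results for Schottky self-maps of locally porous relative Schottky sets (\cite{Me1,Me2}; cf.\ Theorem~\ref{thm:uniq}), $u$ is the restriction to $U\cap S$ of a M\"obius transformation $\hat u$ with $\hat u(a)=a$, $\hat u\neq\id$; its local inverse at $a$ is again a Schottky map, so replacing $u$ by $u^{-1}$ if necessary we may assume $|\la|\ge1$, where $\la:=u'(a)$, and I treat the principal case $|\la|>1$, in which $a$ is a repelling fixed point of $\hat u$ and the branches $u^{-k}$ fixing $a$ contract a fixed neighborhood $W$ of $a$ in $S$ to $\{a\}$. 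The key point is that, because the $u^{\pm k}$ are restrictions of M\"obius maps, the conjugates $\psi_n:=u^{k_n}\circ\phi_n\circ u^{-k_n}$ -- again Schottky homeomorphisms on $W$ for large $n$ -- have distortion functions bounded independently of $n$ and $k_n$. I would take $k_n$ to be the largest integer for which $\sup_W|\,\cdot-\id|$ applied to $\psi_n$ does not exceed a fixed small constant $\eps_0>0$; since $\phi_n\to\id$ and $\phi_n\not\equiv\id$ this $k_n$ is well defined, $k_n\to\infty$, and maximality together with the fact that conjugation by $\hat u$ alters $\sup_W|\,\cdot-\id|$ by at most a bounded factor yields a uniform lower bound $\sup_W|\psi_n-\id|\ge\delta_0>0$. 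By the uniform distortion bound and Arzel\`a--Ascoli, a subsequence of $\{\psi_n\}$ converges locally uniformly to a homeomorphism $\psi$ onto its image, which by Theorem~\ref{thm:Smap} is a Schottky homeomorphism onto a relative Schottky set, with $\delta_0\le\sup_W|\psi-\id|\le\eps_0$, so $\psi\neq\id$.

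The proof would then be completed by invoking the rigidity theory once more: a non-trivial Schottky self-map of $S$ near $a$ this close to the identity contradicts the uniqueness/rigidity statements for Schottky maps (Theorem~\ref{thm:uniq}, \cite{Me1,Me2}). This contradiction forces $\phi_n\equiv\id$ near $a$, that is $h_n=h$ near $a$, for all large $n$, and the identity principle on the connected set $U\cap S$ then gives $h_n=h$ on all of $U\cap S$ for all $n\ge N$. I expect the main obstacle to be precisely this concluding step -- extracting the exact quantitative rigidity needed to rule out a near-identity, non-trivial Schottky self-map obtained as a renormalized limit (possibly after a further normalization so that $\psi$ fixes a point), which is what the results of \cite{Me1,Me2} are designed to provide. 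A secondary technical point is disposing of the neutral case $|\la|=1$ for $u$, which does not arise in the applications of the theorem, where $u$ comes from an expanding dynamical system.
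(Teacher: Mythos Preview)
The paper does not prove this theorem: it is quoted as a reformulation of \cite[Theorem~5.2]{Me2}, with the sole observations that the hypothesis $u\neq\id$ forces $u'(a)\neq1$ by \cite[Theorem~4.1]{Me2}, and that allowing $\infty$ in the open sets $U,U_n,\widetilde U$ is harmless since $\infty\notin S$. So there is nothing in this paper to compare your argument against; you are in effect proposing a proof of the cited result from \cite{Me2}.

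Your outline has the right architecture --- pass to $\phi_n=h^{-1}\circ h_n\to\id$, renormalize by conjugating with powers of $u$, extract a limit $\psi$ that is close to but distinct from the identity, and appeal to rigidity --- but there are genuine gaps. The most serious is your assertion that $u$ is the restriction of a M\"obius transformation. This is false in general for \emph{relative} Schottky sets: any conformal automorphism of the ambient domain $D$ preserving $S$ restricts to a Schottky self-map, and neither Theorem~\ref{thm:uniq} nor the results you cite yield a M\"obius extension. Your uniform distortion bound for $\psi_n=u^{k_n}\circ\phi_n\circ u^{-k_n}$ rests entirely on this M\"obius claim, so as written the renormalization step is unsupported; the actual argument in \cite{Me2} must control distortion using only the Schottky-map structure (conformality with continuous derivative) together with $u'(a)\neq1$. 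Second, from $u\neq\id$ one obtains only $\lambda=u'(a)\neq1$, not $|\lambda|\neq1$; the case $|\lambda|=1$, $\lambda\neq1$ is part of the theorem as stated and cannot be dismissed by saying it does not arise in the applications. Third, your closing appeal to rigidity is vague in exactly the place where the content lies: Theorem~\ref{thm:uniq} does not rule out a non-trivial Schottky map that is merely $\eps_0$-close to the identity, and the relevant tool --- \cite[Theorem~4.1]{Me2}, which says a Schottky map fixing $a$ with derivative $1$ there is the identity --- requires you to first arrange that the renormalized limit $\psi$ fixes $a$ with $\psi'(a)=1$, which your construction does not obviously deliver.
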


This is a version of \cite[Theorem~5.2]{Me2} formulated in a way that will be convenient for our applications. Note that our assumption on $u$ implies 
$u'(a)\ne 1$ by  \cite[Theorem~4.1]{Me2}. It does not make a difference whether one allows  the open sets $U, U_n, \widetilde 
U$ to contain the point $\infty\in \Ch$  (as in our formulation) or subset of $\C$ (as required in \cite{Me2}), because $\infty\not\in S$ and so we can always delete $\infty$ from the open sets. 

We refer the reader to~\cite{Me2} for the definition of local porosity. It is easy  to check that the condition of local porosity is satisfied by $S_p$ and $D_p$ and is invariant under quasisymmetric maps. 

We also need  the following uniqueness result  \cite[Corollary~4.2] {Me2}. 

\begin{theorem}\label{thm:uniq}
Let $S\sub \C$ be a locally porous  relative Schottky set, and
$U\sub \Ch$ be an open set such that $U\cap S$ is connected.  
Suppose $f,g\: U\cap S  \ra  S$ are Schottky maps, and consider
$$
A \coloneqq  \{x \in U\cap S \: f(x) = g(x)\}.$$
If $A$ has a limit  point in $U\cap S$, then  $A =U\cap  S $ and so $f=g$.
\end{theorem}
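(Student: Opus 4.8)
The plan is to run an identity-principle argument in the spirit of the one for holomorphic functions, with the role of the Taylor expansion played by the conformality of Schottky maps at a point together with the rigidity statement \cite[Theorem~4.1]{Me2} quoted in the discussion following Theorem~\ref{thm:stabil}.

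First I would collect the soft facts. The set $A$ is closed in $U\cap S$, since $f$ and $g$ are continuous. Let $A^*\sub U\cap S$ be the set of accumulation points of $A$ lying in $U\cap S$; it is closed in $U\cap S$, and $A^*\sub A$ because $A$ is closed. By hypothesis $A^*\ne\emptyset$, and since $U\cap S$ is connected it suffices to show that $A^*$ is open in $U\cap S$: then $A^*=U\cap S$, whence $A=U\cap S$. I would also note that a relative Schottky set has no isolated points --- a punctured planar neighborhood of such a point would have to coincide with a single complementary disk, which is impossible --- so every nonempty relatively open subset of $S$ is perfect.

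Now the local step. Fix $b\in A^*\sub A$, so that $f(b)=g(b)$. Since $g$ is a local homeomorphism it has a local inverse near $b$, and composing we obtain on a suitable relative neighborhood $V\cap S$ of $b$ a Schottky map $F\coloneqq g^{-1}\circ f$ (local inverses and compositions of Schottky maps are Schottky, by the chain rule for the conformal derivative) with $F(b)=b$ and $\{x\in V\cap S\colon F(x)=x\}=A\cap V$. Either $F$ is the identity on $V\cap S$ for some such neighborhood, or it is not for any of them. In the first case $f=g$ on $V\cap S$, so $V\cap S\sub A$; since $V\cap S$ is perfect, every one of its points is an accumulation point of $A$, i.e.\ $V\cap S\sub A^*$, and $b$ is interior to $A^*$. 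In the second case I would fix $V$ with $V\cap S$ connected, so that $F\colon V\cap S\to S$ is a Schottky map fixing $b$ and not equal to the identity; then \cite[Theorem~4.1]{Me2} yields $F'(b)\ne 1$. Conformality of $F$ at $b$ gives $F(x)-b=F'(b)(x-b)+o(|x-b|)$ as $x\to b$ in $V\cap S$, so for $x$ near $b$ the equation $F(x)=x$ would force $|F'(b)-1|\,|x-b|=o(|x-b|)$, hence $x=b$; thus $b$ would be isolated in $A\cap V$, contradicting $b\in A^*$. So the second case never occurs, $A^*$ is open, and the theorem follows.

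The one substantive ingredient is \cite[Theorem~4.1]{Me2}: a Schottky self-map fixing a point and not equal to the identity cannot be tangent to the identity there. I expect this to be the real obstacle if one wants to prove Theorem~\ref{thm:uniq} from scratch rather than cite \cite{Me2}; the connectedness/clopen bookkeeping, the stability of Schottky maps under composition and local inversion, and the first-order expansion are all routine. The only mild technical point is the need for arbitrarily small connected neighborhoods of $b$ in order to invoke \cite[Theorem~4.1]{Me2}; this is automatic in every situation relevant to this paper, where $S$ is a quasisymmetric image of a Sierpi\'nski carpet and hence locally connected.
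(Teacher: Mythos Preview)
The paper does not prove this statement; it is quoted without proof as \cite[Corollary~4.2]{Me2}. Your argument is the natural identity-principle route from \cite[Theorem~4.1]{Me2} to that corollary, and it is correct: the clopen bookkeeping on $A^*$, the closure of Schottky maps under local inversion and composition, and the first-order expansion showing that $F'(b)\ne 1$ forces $b$ to be isolated in the fixed-point set are all sound. You have also correctly isolated \cite[Theorem~4.1]{Me2} as the only substantive ingredient.

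The caveat you raise about needing arbitrarily small connected relative neighborhoods of $b$ in order to invoke \cite[Theorem~4.1]{Me2} on $F=g^{-1}\circ f$ is genuine. Since the paper merely cites the result, it does not address this point either; as you observe, it is automatic in every application made here because the Schottky sets in question are quasisymmetric images of Sierpi\'nski carpets and hence locally connected. If one wanted to remove the caveat in general, one option is to avoid forming the local composite $F$ and instead compare $f$ and $g$ directly on the globally connected set $U\cap S$: the accumulation hypothesis and the first-order expansions at $b$ force $f'(b)=g'(b)$, and the rigidity underlying \cite[Theorem~4.1]{Me2} then gives $f=g$ on $U\cap S$.
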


We can apply these results in our context due to  the following fact. 

\begin{lemma} \label{betaex} There exists a quasisymmetry
$\beta\:P\ra \Ch$ such that $S\coloneqq\beta(D_p)$ is a locally porous Schottky set contained in $\C$ and $U=\beta(\inte(Q))$ is a bounded
Jordan region in $\C$ such that $U\cap S$ is connected.  Moreover, there exist a point $a \in U\cap S$ and a Schottky map  $u\: U\cap S\ra S$ such that $u(a)=a$ and  $u$ is not the identity on $U\cap  S$.
\end{lemma}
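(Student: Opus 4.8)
The plan is to build $\beta$ in two steps---a quasisymmetric uniformization of $D_p$ by a round carpet, followed by a M\"obius normalization---and then to produce the Schottky map $u$ as an inverse branch of an iterate of the Latt\`es map $T$, transported by $\beta$.

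First I would apply the uniformization theorem of~\cite{Bo}. Since $P$ with its path metric is Ahlfors $2$-regular and linearly locally connected, the conformal identification $P\cong\Ch$ from Section~\ref{s:lat} is a quasisymmetry, so we may view $D_p$ as a Sierpi\'nski carpet in $\Ch$. Its peripheral circles are exactly the boundaries of the removed squares (note that the common circle $O$ is \emph{not} a peripheral circle of $D_p$, since $D_p\setminus O$ is disconnected), and by the self-similar construction they form a uniformly relatively separated family of uniform quasicircles bounding uniform quasidisks. Hence~\cite{Bo} provides a quasisymmetry $\beta_0\:P\ra\Ch$ taking $D_p$ onto a round carpet. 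Now fix a complementary Jordan region $R$ of $D_p$ with $R\sub\inte(Q')$ and post-compose $\beta_0$ with a M\"obius transformation $\mu$ sending a point of $\beta_0(R)$ to $\infty$; put $\beta\coloneqq\mu\circ\beta_0$. Then $S\coloneqq\beta(D_p)$ is again a round carpet, $\infty$ lies in the complementary disk $\beta(R)$, so $S\sub\C$, and $S$ is locally porous since $D_p$ is and local porosity is a quasisymmetric invariant. As $\infty\in\beta(R)\sub\beta(\inte(Q'))$ is disjoint from $\beta(Q)$, the Jordan region $U\coloneqq\beta(\inte(Q))$ is bounded in $\C$; and $U\cap S=\beta\bigl(\inte(Q)\cap D_p\bigr)=\beta(S_p\setminus O)$ is connected because $O$ is a peripheral circle of $S_p$.

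For the map $u$, the idea is that $T$ has no convenient fixed point in $U\cap S$ (its fixed points on the front are the corners of $Q$, which lie on $O$), so instead one uses a contracting inverse branch landing inside a good interior tile. Since $p\ge3$ one can choose $k\in\N$ and a \emph{good} white $k$-tile $W\sub\inte(Q)$ (for $p=3$ one needs $k\ge2$). Let $g\coloneqq(T^k|W)^{-1}\:Q\ra W$; this is a contracting similarity with $g(Q\cap D_p)=W\cap D_p$, and $h\coloneqq g\circ g\:Q\ra g(W)$ is a contracting \emph{orientation-preserving} similarity with $h(Q\cap D_p)=g(W)\cap D_p$ and $h(Q)=g(W)\sub W\sub\inte(Q)$. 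By the Banach fixed point theorem $h$ has a unique fixed point $a^*\in h(Q)\sub\inte(Q)$, and since every $h^n(Q)$ is a good $2nk$-tile, the sets $h^n(Q)\cap D_p$ form a decreasing sequence of non-empty compacta with intersection $\{a^*\}\cap D_p$, so $a^*\in D_p$. Thus $a^*\in S_p\setminus O$ and $a\coloneqq\beta(a^*)\in U\cap S$. Finally set $v\coloneqq\beta\circ h\circ\beta^{-1}$; this is continuous on $U$ with $v(U)\sub U\sub\C$ and $v(U\cap S)\sub S$, and each point of $U\cap S$ is a good point for $v$ and $S$: near such a point $v$ is the composition of a restriction of $\beta^{-1}$, the similarity $h$, and $\beta$, hence an orientation-preserving quasisymmetry, and it matches $S$ correctly because $h$ is a homeomorphism of $Q$ onto $g(W)$ with $h(Q\cap D_p)=g(W)\cap D_p$. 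Since $S\sub\C$ is a relative Schottky set of measure zero (being locally porous), Theorem~\ref{thm:Smap} shows $u\coloneqq v|_{U\cap S}\:U\cap S\ra S$ is a Schottky map; it fixes $a$, and it is not the identity, since otherwise $h$ would be the identity on $\beta^{-1}(U\cap S)=S_p\setminus O$, contradicting uniqueness of the fixed point of the contraction $h$.

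I expect the main point requiring care to be exactly this last construction: arranging a Schottky \emph{self}-map of $U\cap S$ with a fixed point lying \emph{in} $U\cap S$ rather than on the outer peripheral circle. Everything else---the uniform quasicircle and separation bounds for the peripheral circles of $D_p$, the extension of the carpet uniformization of~\cite{Bo} to an ambient quasisymmetry (which uses that the complementary regions are uniform quasidisks), and the routine good-point verifications---is standard given the self-similarity of $D_p$ and the fact that $\beta$ is a quasisymmetry.
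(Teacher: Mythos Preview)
Your proof is correct and follows the same overall strategy as the paper: uniformize $D_p$ by a round carpet using \cite{Bo}, normalize by a M\"obius transformation so that $S\sub\C$ and $U$ is bounded, and then obtain $u$ by conjugating an inverse branch of an iterate of the Latt\`es map $T$ and invoking Theorem~\ref{thm:Smap}.

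The one genuine difference is in how the fixed point is produced. The paper writes it down explicitly: it takes $\sigma=\bigl(\tfrac1{p+1},\tfrac1{p+1}\bigr)$, checks via the $p$-ary expansion $\tfrac1{p+1}=\sum_{k\ge1}(p-1)p^{-2k}$ that $\sigma\in C_p\times C_p\sub S_p\setminus O$, observes that $\sigma\in\inte(X)$ for the good $2$-tile $X=[(p-1)/p^2,1/p]^2$, and uses the congruence $p^2/(p+1)\equiv 1/(p+1)\pmod 2$ to see that $T^2(\sigma)=\sigma$; then $v=(T^2|X)^{-1}$ serves as the pre-conjugated map. You instead pick any good white $k$-tile $W\sub\inte(Q)$, set $g=(T^k|W)^{-1}$ and $h=g\circ g$, and obtain the fixed point from the Banach contraction principle together with the nested-compacta argument $\bigcap_n h^n(Q)\cap D_p\ne\emptyset$. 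Your route avoids the arithmetic verification and works uniformly in $p$; the paper's route gives a concrete point. Incidentally, squaring to $h=g\circ g$ is harmless but unnecessary: since $T$ is holomorphic on $P$, the map $T^k|W\:W\to Q$ is already orientation-preserving, so $g$ itself would do.
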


\begin{proof} We use the following uniformization theorem proved in \cite{Bo} (where the terminology is also explained): if  $Z\subseteq\Sph$ is a Sierpi\'nski carpet whose peripheral circles
are uniformly relatively separated uniform quasicircles, then there  exists a quasisymmetry $\beta\: \Sph\to \Sph$ such that $\beta(Z)\sub \Sph$ is a  {\em round}  Sierpi\'nski carpet, i.e., a Sierpi\'nski carpet whose   peripheral circles  are geometric circles
(see  \cite[Corollary~1.2]{Bo}).   Since $P$ is bi-Lipschitz equivalent to $\Ch$ and our Sierpi\'nski carpet $D_p$ has peripheral circles that are uniform quasicircles and are uniformly relatively separated,  we can apply this statement and obtain a quasisymmetry $\beta\:P\ra \Sph$ such that  $S\coloneqq\beta(D_p)\sub \Sph$ is a round Sierpi\'nski carpet.   By postcomposing $\beta$ with a M\"obius transformation if necessary, we may assume that $\beta$ is orientation-preserving, 
$ S\sub \C$ and that $U=\beta(\inte(Q))$ is a  bounded Jordan region in  $\C$.  Then  $S$ is a Schottky set. It is  locally porous, because $D_p$ is a locally porous 
subset of  $P$ and this property is preserved under quasisymmetries. 

The set  
$$U\cap S =\beta(\inte(Q))\cap \beta(D_p)=\beta(\inte(Q)\cap D_p)
=\beta(S_p\setminus O)$$ is connected as a continuous image of the connected set 
 $S_p\setminus O$.

To find a point $a$ and a map $u$ with the desired properties, we consider 
$\sigma=(1/(p+1), 1/(p+1))\in Q\sub P$. Then $\sigma\in S_p\setminus O$. Indeed, the identity  
$$
\frac1{p+1}=\sum_{k=0}^\infty\frac{p-1}{p^{2(k+1)}}
$$  shows that the $p$-ary expansion of $1/(p+1)$ has only coefficients 0 and $p-1$, and thus $\sigma$ belongs to the direct product $C_p\times C_p$, where $C_p$ is a Cantor set  constructed similarly to the standard Cantor set, but instead of subdividing $[0,1]$ into three equal parts, we subdivide it into  $p$ equal parts, remove the interior of the middle part (which is well defined because $p$ is assumed to be odd), and continue in the usual self-similar way. Now $C_p\times C_p$ is a subset of $S_p$, which implies that  $\sigma\in S_p$. Clearly, $\sigma$ does not belong to $O$, and so  $\sigma\in S_p\setminus O$. 

Actually, $\sigma$ is contained in the interior of
the $2$-tile $X\coloneqq [(p-1)/p^2,1/p]^2$. This is a good $2$-tile and $T^2$ is an orientation-preserving  quasisymmetry from $\inte(X)$ onto $\inte(Q)$ 
with $T^2(\inte (X)\cap D_p)=\inte(Q)\cap D_p=S_p\setminus O$.
 The inverse map is an  orientation-preserving  quasisymmetry 
$v\: \inte (Q)\ra \inte(X)$ with  $v(S_p\setminus O)=
\inte (X)\cap D_p$.
We have  $T^2(\sigma)=\sigma$  (essentially, this follows from 
$p^2/(p+1)\equiv1/(p+1)\mod 2$), and so $v(\sigma)=\sigma$.

We now  define $a\coloneqq \beta(\sigma) \in\beta(S_p\setminus O)=U\cap S$, and  consider the map  $\widetilde u\coloneqq \beta\circ v\circ \beta^{-1}$ defined on  $U=\beta(\inte(Q))$.  Then 
$\widetilde u$ is an  orientation-preserving    quasisymmetry of  $U$ onto the open set 
$\widetilde u(U) =\beta(\inte(X))$ with 
$$ \widetilde u(U\cap S)=  \widetilde 
u(\beta(S_p\setminus O))=\beta(\inte(X)\cap D_p)=\beta(\inte(X))\cap S=\widetilde u(U)\cap S.$$ Theorem~\ref{thm:Smap}  implies that $u\coloneqq 
\widetilde u|U\cap S$ is a  Schottky map $u\:  U\cap S\ra S$. Moreover, $u(a)=a$ and $u$ is not the identity on  $S$.
\end{proof}

\begin{corollary} \label{cor:dense} Let $\beta\: 
P\ra \Sph$ be the quasisymmetry from Lemma~\ref{betaex} with $ S=\beta(D_p)$, and  $f,g\: P\ra P$ be  admissible maps for $D_p$. Define 
$$ \widetilde f\coloneqq  \beta \circ f \circ \beta^{-1},\quad  
 \widetilde g\coloneqq  \beta \circ g \circ \beta^{-1}. $$ Then there exists a region $U\sub \Ch$ such that $U \cap S$ is a connected set that is dense in $ S$ and 
$\widetilde f, \widetilde g\:  U\cap  S\ra S$ are Schottky maps. \end{corollary}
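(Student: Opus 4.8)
The plan is to combine the admissibility of $f$ and $g$ with Lemma~\ref{lem:pullperi} to locate a large ``good'' region, and then invoke Theorem~\ref{thm:Smap} to upgrade the resulting quasisymmetries to Schottky maps. First I would let $E_f$ and $E_g$ be exceptional sets for $f$ and $g$, respectively, and set $E\coloneqq E_g\cup g^{-1}(E_f)$. By the remark following Lemma~\ref{lem:pullperi}, the set $g^{-1}(E_f)\cap D_p$ is contained in a union of a finite set and finitely many peripheral circles of $D_p$; hence the same is true for $E\cap D_p$. By the argument in the proof of Lemma~\ref{lem:compadm}, every point of $D_p\setminus E$ is a good point for $h\coloneqq f\circ g$ and $D_p$; in particular $h(D_p\setminus E)\subseteq D_p$ and each such point has a Jordan neighborhood on which $h$ restricts to an orientation-preserving quasisymmetry respecting $D_p$. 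The same reasoning applied separately to $f$ and to $g$ shows that every point of $D_p\setminus E_f$ is good for $f$ and every point of $D_p\setminus E_g$ is good for $g$.

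Next I would transport everything to $\Sph$ via $\beta$ from Lemma~\ref{betaex}. Put $S=\beta(D_p)$, a locally porous round Schottky set contained in $\C$, and note that $S$ has measure zero since round Schottky carpets are porous hence of measure zero (alternatively this is part of what is recorded in \cite{Me1}). Let $\widetilde E\coloneqq \beta(E\cup E_f)$. Since $\beta$ is a quasisymmetry, it carries finite sets to finite sets and peripheral circles of $D_p$ to peripheral circles (geometric circles) of $S$, so $\widetilde E\cap S$ lies in a union of a finite set and finitely many peripheral circles of $S$. Define $U\subseteq\Sph$ to be $\Sph$ minus this finite set and minus those finitely many peripheral circles of $S$. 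Then $U$ is open, and $U\cap S = S\setminus\widetilde E$ is $S$ with at most countably many points removed, so it is dense in $S$; it is also connected because removing a finite set together with finitely many peripheral circles (each a nowhere dense arc family not separating the carpet) from a Sierpi\'nski carpet leaves it connected — this follows from the same Moore-collapse/continuum argument used in the proof of Lemma~\ref{lem:pullperi}, or more directly from the fact that $S$ is a carpet and peripheral circles are non-separating. Every point of $U\cap S$ is a good point for $\widetilde f=\beta\circ f\circ\beta^{-1}$ (resp.\ for $\widetilde g=\beta\circ g\circ\beta^{-1}$) and $S$, since $\beta$ conjugates good points to good points: composing the Jordan-region charts of $f$ (resp.\ $g$) with $\beta$ and $\beta^{-1}$ produces orientation-preserving quasisymmetric charts for $\widetilde f$ (resp.\ $\widetilde g$) mapping $V\cap S$ onto $W\cap S$. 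Finally, $\widetilde f$ and $\widetilde g$ are continuous on all of $\Sph$ (conjugates of continuous maps) and send $U\cap S$ into $S$, so Theorem~\ref{thm:Smap}, applied with the open set $U$, shows that $\widetilde f|U\cap S$ and $\widetilde g|U\cap S$ are Schottky maps $U\cap S\to S$.

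The only point requiring a little care — and the step I expect to be the main obstacle — is verifying that $U\cap S$ is \emph{connected} after deleting finitely many peripheral circles (not just finitely many points). Deleting finitely many points from a carpet obviously keeps it connected, but a peripheral circle is a Jordan curve in $S$, and one must confirm that its removal (for finitely many such circles) does not disconnect $S$. This is exactly the non-separating property that characterizes peripheral circles of a Sierpi\'nski carpet (recalled in the introduction: a Jordan curve $J\subseteq S$ is peripheral iff $S\setminus J$ is connected), together with the elementary observation that the union of two connected sets whose closures meet is connected, applied inductively; alternatively one can run the Moore-collapse argument, noting that collapsing the finitely many complementary disks bounded by these circles together with all other complementary components yields a $2$-sphere in which the images of the deleted circles are finitely many points, whose removal keeps the image connected. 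Once connectedness of $U\cap S$ is in hand, the rest is a routine bookkeeping of exceptional sets and a direct appeal to Theorem~\ref{thm:Smap}, and the corollary follows.
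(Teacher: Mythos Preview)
Your approach is essentially the paper's: take a common exceptional set for the conjugated maps, delete it to form $U$, verify $U\cap S$ is connected and dense via a Moore-collapse argument, and apply Theorem~\ref{thm:Smap}. Two points deserve correction.

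First, the detour through $h=f\circ g$ and $g^{-1}(E_f)$ is irrelevant here: the corollary asks only that $\widetilde f$ and $\widetilde g$ \emph{separately} be Schottky maps on a common $U\cap S$, so it suffices to take $E=E_f\cup E_g$ (or its $\beta$-image). Including $g^{-1}(E_f)$ is harmless but unmotivated; it belongs to the proof of Lemma~\ref{lem:compadm}, not this corollary.

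Second, and more substantively, your $U=\Sph\setminus(\text{finite set}\cup J_1\cup\dots\cup J_N)$ is open but \emph{not} a region: each $J_i$ is a Jordan curve in $\Sph$ and separates it, so your $U$ has several components (the interiors of the bounded complementary disks, together with the unbounded piece). The corollary explicitly asks for a region. The paper's fix is to remove the \emph{closed disks} $D_1,\dots,D_N$ bounded by $J_1,\dots,J_N$ (with $\infty\in D_1$, say) rather than just the circles; then $U=\Sph\setminus(F\cup D_1\cup\dots\cup D_N)$ is a genuine region in $\C$, and $U\cap S$ is unchanged since $S$ is disjoint from the open disks anyway. With this adjustment your argument goes through and matches the paper's.
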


So if we conjugate the  admissible maps $f$
and $g$ for $D_p$ by the uniformizing map $\beta$, then we obtain  Schottky maps at least on the  large part $U\cap  S $ of 
$ S$.

\begin{proof} Since  $f$ and $g$ are  admissible for $D_p$,  the maps $\widetilde f$ and $\widetilde g$ are  admissible for the Sier\-pi\'n\-ski carpet $ S=\beta(D_p)\sub \C$.  This implies that  there exist 
a finite set $F\sub  S$ and finitely many peripheral circles $J_1, \dots, J_N$ of $ S$ such that 
$E\coloneqq F\cup J_1\cup \dots \cup J_N$ is an exceptional set for $\widetilde f$ and for $\widetilde g$.   
Let $D_1, \dots, D_N$ be the closures of the complementary components of $ S$ (in $\Sph$) bounded by $J_1, \dots, J_N$, respectively. Since $ S\sub \C$, we may assume that $\infty\in D_1$. Then 
$$U\coloneqq  \Sph\setminus (F\cup   D_1\cup \dots \cup D_N)$$
is a region in $\C$. The set 
$U\cap S=S\setminus E$ is connected and  dense in $S$ (the quickest way to see this is again by an argument as in the proof of Lemma~\ref{lem:pullperi} based on Moore's theorem---we leave the details to the reader). 
Note that $\widetilde f(S), \widetilde g( S)\sub  S$ and each point in  $U\cap  S=S\setminus E$ is a good point for the maps   
$\widetilde f$ and $\widetilde g$    and the set $S$.
 Theorem~\ref{thm:Smap} implies that $\widetilde f$ and  $\widetilde g$ are Schottky maps $U\cap S\ra S$. 
\end{proof} 

\begin{corollary} \label{cor:id} Let $f,g\: P\ra P$ be admissible maps 
for $D_p$.
If there exists a set $A\sub D_p$ that is relatively open in $D_p$
such that $f=g$ on $A$, then $f=g$ on $D_p$. 
\end{corollary}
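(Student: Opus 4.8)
The plan is to transfer the equality $f=g$ from $D_p$ to the round carpet $S=\beta(D_p)$ using the uniformizing quasisymmetry $\beta\:P\ra\Sph$ from Lemma~\ref{betaex}, and then invoke the uniqueness statement for Schottky maps (Theorem~\ref{thm:uniq}). We may assume $A\ne\emptyset$, since otherwise the hypothesis is vacuous and there is nothing to prove; this is the intended reading of the statement.

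First I would apply Corollary~\ref{cor:dense} to our admissible maps $f$ and $g$. This yields a region $U\sub\Ch$ such that $U\cap S$ is connected and dense in $S$, and such that the conjugated maps $\widetilde f\coloneqq\beta\circ f\circ\beta^{-1}$ and $\widetilde g\coloneqq\beta\circ g\circ\beta^{-1}$ restrict to Schottky maps $U\cap S\ra S$. Since $S$ is a locally porous relative Schottky set by Lemma~\ref{betaex}, all the hypotheses of Theorem~\ref{thm:uniq} will be in place for $\widetilde f$ and $\widetilde g$ on $U\cap S$ once we check that they agree on a large enough subset.

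Next I would translate the hypothesis into the Schottky picture. The set $B\coloneqq\beta(A)$ is a nonempty relatively open subset of $S$, and $\widetilde f=\widetilde g$ on $B$. Because $U\cap S$ is dense in $S$, the intersection $B\cap U\cap S$ is nonempty; it is relatively open in $U\cap S$, and since $S$ is a Sierpi\'nski carpet it has no isolated points, so $U\cap S$ is perfect and this intersection therefore accumulates at a point of $U\cap S$. Consequently the agreement set $\{x\in U\cap S\:\widetilde f(x)=\widetilde g(x)\}$ has a limit point in $U\cap S$, and Theorem~\ref{thm:uniq} forces $\widetilde f=\widetilde g$ on all of $U\cap S$.

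Finally, $\widetilde f$ and $\widetilde g$ are the restrictions to $S$ of the continuous maps $\beta\circ f\circ\beta^{-1}$ and $\beta\circ g\circ\beta^{-1}$ on $\Sph$ (recall $f,g$ are branched covering maps, hence continuous, and $\beta$ is a homeomorphism), so having already verified that they agree on the dense subset $U\cap S$ of $S$, they agree on all of $S$. Conjugating back by $\beta$ gives $f=g$ on $\beta^{-1}(S)=D_p$, as desired. The argument is essentially bookkeeping; the only point that needs a little care is ensuring that the locus where $\widetilde f$ and $\widetilde g$ agree meets $U\cap S$ in a set with a limit point there, and this is exactly where the density of $U\cap S$ in $S$ from Corollary~\ref{cor:dense} is used.
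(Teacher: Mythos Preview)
Your proof is correct and follows essentially the same route as the paper: conjugate by $\beta$, use Corollary~\ref{cor:dense} to obtain Schottky maps on a connected dense piece $U\cap S$, observe that $\beta(A)\cap U$ is a nonempty relatively open subset of $U\cap S$ (hence has a limit point there), apply Theorem~\ref{thm:uniq}, and extend to all of $S$ by density and continuity. Your write-up is slightly more explicit than the paper's in justifying the limit-point step via perfectness of $S$, but the argument is the same.
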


\begin{proof} If $\beta$ is the map from Lemma~\ref{betaex} and $U$ is as in Corollary~\ref{cor:dense}, 
then $A'\coloneqq U\cap \beta(A)$ is a non-empty and relatively open set in 
$U\cap S$, where $  S=\beta(D_p)$. In particular, $A'$ has a limit point 
in  $U\cap  S $, and the Schottky maps $\widetilde f, \widetilde g\: U\cap S \ra  S$ as defined in Corollary~\ref{cor:dense} agree on $A'$.   It follows from Theorem~\ref{thm:uniq} that $\widetilde f$ and $\widetilde g$ agree on $U\cap  S $ and hence on $ S$, because 
$U\cap S$ is dense in $S$. Thus $f=g$ on $\beta^{-1}(  S)=D_p$.
\end{proof}

\section{Relation to Latt\`es maps}\label{sec:lattes}

We now want to prove a crucial relation between an arbitrary  quasisymmetry $\xi\: D_p\ra D_p$ and our Latt\`es map  $T$ (recall that the odd integer $p\ge 3$ is fixed).

\begin{proposition}\label{baseprop} Let $\xi \:D_p\ra D_p$ be a quasisymmetry. Then there exist $k,n,m\in \N$ such that 
\begin{equation}\label{baserel}
T^m\circ \xi  = T^n\circ \xi \circ  T^k
\end{equation} 
on $D_p$. Here we may assume that $k,n,m$  are arbitrarily large.
\end{proposition}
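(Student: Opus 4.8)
The plan is to use a "blow up-blow down" compactness argument, analogous to \cite[Lemma~6.1]{BLM} but adapted to our situation via the admissibility framework of Section~\ref{sec:adm}. Fix a quasisymmetry $\xi\colon D_p\to D_p$; by \cite{BM} we may extend $\xi$ to a quasisymmetry of $P$ (still denoted $\xi$) with $\xi(D_p)=D_p$, so that Lemma~\ref{lem:subtile} applies. First I would feed the conclusion of Lemma~\ref{lem:subtile} into the inverse-branch machinery of Section~\ref{s:lat}: starting from a deep good $n$-tile $X=Z^n=T^{-n}(Q)$ (so $T^n$ is a rescaling $X\to Q$ and $T^n(X\cap D_p)=S_p$), Lemma~\ref{lem:subtile} produces a good subtile $Y=T^{-n}(Y_0)\subseteq X$ for some good $r_0$-tile $Y_0\subseteq Q$, together with a good $m$-tile $Z$ such that $\xi(Y)\subseteq Z$ with $\diam(\xi(Y))\asymp p^{-m}$. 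Precomposing with $T^n$ and postcomposing with $T^m$, the map $T^m\circ\xi\circ T^{-n}$ carries the fixed good $r_0$-tile $Y_0$ into $Q$, and the diameter bound says this composite is a quasisymmetry of $Y_0\cap D_p$ into $S_p$ with \emph{uniformly bounded} distortion and image of diameter bounded above and below — the constants being independent of $n$.

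The second step is the compactness harvest. Letting $n\to\infty$ through a suitable subsequence, the maps $\psi_n\coloneqq T^{m(n)}\circ\xi\circ T^{-n}$, restricted to the fixed good tile $Y_0$, form an equicontinuous family (uniform quasisymmetry constants) with images in the compact space $P$; by Arzelà–Ascoli a subsequence converges uniformly on $Y_0\cap D_p$ to a limit quasisymmetric embedding $\psi\colon Y_0\cap D_p\to D_p$. Since there are only finitely many good $r_0$-tiles $Y_0$ and finitely many possible "types" of $m$-tile $Z$ up to the similarity $T^m$, we may also assume $Y_0$, the combinatorial position of $Z$, and hence the rescaled target are all constant along the subsequence. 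Because the exponents $m(n)$ and $n$ go to infinity, for two indices $n_1<n_2$ in the subsequence we can compare $\psi_{n_1}$ and $\psi_{n_2}$: using the consistency relation $T^{n_2-n_1}\circ T^{-n_2}=T^{-n_1}$ for our inverse branches (the relation flagged as \eqref{eq:consist} in Section~\ref{s:lat}) one rewrites $\psi_{n_2}=T^{m(n_2)}\circ\xi\circ T^{-(n_2-n_1)}\circ T^{-n_1}$, and the point is that $\psi_{n_2}$ is obtained from a map of the same form as $\psi_{n_1}$ by precomposition with a power of $T$. This is exactly the setup for the stabilization theorem.

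The third and decisive step is to transport everything through the uniformization $\beta$ of Lemma~\ref{betaex} and invoke Theorem~\ref{thm:stabil}. By Lemma~\ref{lem:Tpowers}, each relevant composite $\xi^{-1}\circ T^{n}\circ\xi\circ T^{k}$ (and its conjugate by $\beta$) is admissible for $D_p$, so by Corollary~\ref{cor:dense} the $\beta$-conjugates are genuine Schottky maps on a dense connected subset $U\cap S$ of the round carpet $S=\beta(D_p)$. The stabilized sequence $\{h_n\}$ of such Schottky maps converges locally uniformly to a homeomorphism, so Theorem~\ref{thm:stabil} yields $h_{n_1}=h_{n_2}$ for all large $n_1,n_2$ in the subsequence; unwinding the conjugation and the definitions, this equality is precisely a relation of the form $T^m\circ\xi=T^n\circ\xi\circ T^k$ holding on a relatively open subset of $D_p$, whence on all of $D_p$ by Corollary~\ref{cor:id}. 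Finally, to get $k,n,m$ arbitrarily large one simply starts the whole argument at a deeper level (larger $N$ in Lemma~\ref{lem:subtile}) or iterates the obtained relation. The main obstacle I anticipate is the bookkeeping in the second step: arranging that the quasisymmetry constants of $\psi_n$ are genuinely uniform in $n$ (this rests on $T^{\pm n}$ acting as exact similarities on the relevant tiles, so that all distortion comes from the single map $\xi$) and that, after passing to a subsequence, the combinatorial data $(Y_0,Z)$ and the rescaling factors align so that the limit maps and the Schottky stabilization genuinely apply — in other words, converting the soft compactness statement into the rigid algebraic identity \eqref{baserel} is where the real work lies.
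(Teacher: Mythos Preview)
Your proposal is correct and follows essentially the same approach as the paper: blow down via consistent inverse branches $T^{-k_n}$, blow up via $T^{l_n}$ using Lemma~\ref{lem:subtile}, pass to a subsequence with fixed $r_0$-tile $Y_0$, extract a uniform limit by compactness, conjugate by $\beta$ from Lemma~\ref{betaex} and apply the Schottky stabilization Theorem~\ref{thm:stabil}, then extend the resulting local identity to all of $D_p$ via Corollary~\ref{cor:id}. One small clarification on your third step: the Schottky property of the conjugated maps $\widetilde h_n=\beta\circ h_n\circ\beta^{-1}$ comes directly from Theorem~\ref{thm:Smap} (each $h_n$ is already an orientation-preserving quasisymmetry of $\inte(Q)$ onto a Jordan region $\Omega_n$ with $h_n(\inte(Q)\cap D_p)=\Omega_n\cap D_p$, and one must check this passes to the limit $h$), whereas the admissibility machinery of Lemma~\ref{lem:Tpowers} enters only \emph{after} stabilization, to promote the identity from a good tile to all of $D_p$.
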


The proof of this proposition will occupy the rest of this section. The main ideas for establishing the relation \eqref{baserel}  are related  to those for the proof of  the similar relation (1.2) in \cite{BLM}. 

Let $\xi\:D_p\ra D_p$ be the given quasisymmetry. Then it has a (non-unique) extension to a quasisymmetry $\xi\: P\ra P$ (this 
follows  from \cite[Proposition~5.1]{Bo} (see also \cite[Theorem 1.11]{BLM});  here it is important that $P$ is bi-Lipschitz equivalent to $\Ch$ equipped with the chordal metric and that every quasiconformal map $F\: \Ch\ra \Ch$ is a quasisymmetry). 

In order to prove \eqref{baserel}, we may assume 
that this extension $\xi$ is orientation-preserving, because otherwise 
we consider    the homeomorphism 
 $\widetilde \xi\: P\ra P$  given by 
 $\widetilde \xi=R \circ \xi$, where $R\:P\ra P$ is
  the involution on the pillow  $P$ that interchanges corresponding points on the  front and back copies of $P$. Since $R$ is an orientation-reversing  isometry on $P$ with $R(D_p)=D_p$, the map $\widetilde \xi$ is also a quasisymmetry on $P$ with 
  $\widetilde \xi(D_p)=D_p$
  and it will be orientation-preserving if $\xi$ reverses orientation. 
  Moreover,  if  we have a relation as in
 \eqref{baserel} for  $\widetilde \xi$, then a corresponding  relation for $\xi$ with the same numbers $k,n,m$ immediately follows from the identity $R\circ T=T\circ R$. So in the following we may assume that the extension $\xi\: P\ra P$ of the quasisymmetry in Proposition~\ref{baseprop} is orientation-preserving.

%

We consider the point $c\coloneqq (0,0)\in S_p\sub D_p$ (i.e., the lower left corner of $D_p$). We can find a nested sequence 
of tiles  
$X_n,\ n\in \N_0$, of strictly increasing levels $k_n\in \N_0$ with  $X_n\sub Q$ and $c\in X_n$. Then each $X_n$ is a good tile. There exists a unique branch $T^{-k_n}$ on $Q$ such that $T^{-k_n}(Q)=X_n$. These branches $T^{-k_n}$ are {\em consistent} in the sense that 
\begin{equation}\label{eq:consist} 
T^{-k_n} =T^{k_{n+1}-k_n} \circ T^{-k_{n+1}}
\end{equation} 
for $n\in \N_0$. This consistency relation is preserved if we replace the original sequence of tiles  $\{X_n\}$ (and the corresponding sequence of branches $\{T^{-k_n}\}$) by a subsequence as we will do below.  

According to Lemma~\ref{lem:subtile} we can find a good $(k_n+r_0)$-tile  $Y_n\sub X_n$ and a good tile 
$Z_n$ with $\xi(Y_n)\sub Z_n$ and $\diam(\xi(Y_n))\asymp 
\diam(Z_n)$. Here $r_0\in \N$ and the comparability constant are independent of $n$. For each $n\in \N_0$, let $Y'_n\coloneqq T^{k_n}(Y_n)$. Then $Y'_n\sub Q$ is a good tile of level $r_0$ such that $T^{-k_n}(Y'_n)=Y_n$. Since there are only finitely many  
$r_0$-tiles, we may assume, by passing to a subsequence of the original sequence $\{X_n\}$ if necessary, that the tiles $Y'_n$ are 
equal to the same good $r_0$-tile $Y$. We can find an orientation-preserving  scaling map
$\varphi\: Q\ra Y$ that maps $Q$ onto $Y$.

 Let $l_n\in \N_0$ be the level of $Z_n$. Then 
$T^{l_n}|Z_n$ is a scaling map on $Z_n$ that sends 
$Z_n$  to the front face $Q$ or the back face $Q'$ of $P$ depending on whether $Z_n$ is black or white.  
Since  
 $$\diam(\xi(Y_n))\asymp  \diam(Z_n), $$
 we then have 
  $$\diam\big (T^{l_n}(\xi(Y_n))\big)\asymp \diam(T^{l_n}(Z_n))\asymp   1, $$
 and so 
  $T^{l_n}(\xi(Y_n))$ 
has \emph{uniformly large size}, i.e.,  there exists $\alpha>0$ such that  ${\rm{diam}}(T^{l_n}(\xi (Y_n))\ge\alpha$ for all $n\in \N_0$. 

Putting this all together, for each $n\in \N_0$ we obtain 
a map 
\begin{equation} \label{eq:defhn} 
h_n\coloneqq T^{l_n} \circ \xi \circ T^{-k_n}\circ \varphi.
\end{equation} 
This is a quasisymmetric embedding of $Q$ into $P$ with uniformly large image $M_n\coloneqq T^{l_n}(\xi (Y_n))$. Since the maps 
$T^{l_n}$, $T^{-k_n}$, and $\varphi$ just scale distances, the
maps $h_n$ are {\em uniformly} quasisymmetric embeddings, i.e., there exists a distortion function $\eta$ such that 
$h_n\:Q\ra M_n$ 
is an $\eta$-quasisymmetry for each $n\in \N_0$. Note that each of the four maps on the right hand side of \eqref{eq:defhn} has the property that a point in the source space of the map lies in $D_p$ if and only if its image point lies in $D_p$. This implies that for $z\in Q$ we have 
 \begin{equation} \label{eq:inv} 
 z\in D_p  \text{ if and only if }h_n(z)\in D_p. \end{equation} 

We now invoke the  following subconvergence lemma which follows from \cite[Lemma~3.3]{Bo}.

\begin{lemma}\label{lem:Subconv}
Let $(X, d_X)$ and  $(Y, d_Y)$ be compact metric spaces, 
   and let $h_n\: X\ra Y$  be an $\eta$-quasisymmetric embedding for $n\in \N$. Suppose that there exists a constant $\alpha>0$ such that 
${\rm diam}(h_n(X))\ge \alpha$   for $n\in\N$.
  Then there exist  an increasing sequence $\{i_n\}$ in $\N$ and a quasisymmetric embedding $h\: X\to Y$ such that 
$h_{i_n} \to h$ uniformly on $X$ as $n\to \infty$. 
\end{lemma}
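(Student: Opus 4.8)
The plan is to prove the subconvergence lemma (Lemma~\ref{lem:Subconv}) by reducing it to the compactness result \cite[Lemma~3.3]{Bo}, whose hypotheses we have to verify; the only real work is to turn the lower bound on diameters into the lower ``diameter distortion'' control that makes the family of maps precompact. First I would recall that an $\eta$-quasisymmetric embedding $h_n\colon X\to Y$ satisfies the standard two-sided bound on the distortion of diameters of pairs of subsets: for disjoint nonempty sets $A,B\subseteq X$ one has, with $\lambda_n\coloneqq\diam(h_n(A))/\diam(h_n(B))$ and $t\coloneqq\dist(A,B)/\diam(A\cup B)$, inequalities of the form $\lambda_n \le 2\eta(1/t)$ and (by applying $\eta$-quasisymmetry to $h_n^{-1}$, which is $\tilde\eta$-quasisymmetric with $\tilde\eta(t)=1/\eta^{-1}(1/t)$) a matching lower bound. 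In particular, fixing two points $x_0,x_1\in X$ with $\dist(x_0,x_1)\ge\tfrac12\diam(X)$, the diameter of $h_n(X)$ is comparable to $d_Y(h_n(x_0),h_n(x_1))$ with constants depending only on $\eta$.

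Next I would combine this with the hypothesis $\diam(h_n(X))\ge\alpha$ to see that the maps $h_n$ do not degenerate: there is $\alpha'>0$, depending only on $\alpha$ and $\eta$, so that $d_Y(h_n(x),h_n(x'))\ge \alpha'$ for every pair of points with $d_X(x,x')\ge\tfrac12\diam(X)$, and more generally $\diam(h_n(A))\ge c(\eta)\,\alpha$ for every $A\subseteq X$ with $\diam(A)\ge \tfrac12\diam(X)$. This quantitative non-degeneracy, together with the uniform distortion function $\eta$ and the fact that $Y$ is compact (hence has finite diameter, giving a uniform \emph{upper} bound $\diam(h_n(X))\le\diam(Y)$), places the family $\{h_n\}$ exactly in the setting of the compactness statement \cite[Lemma~3.3]{Bo}: a uniformly quasisymmetric family of embeddings of a fixed compact space, with images of uniformly bounded and bounded-below diameter, is precompact in the uniform topology and every subsequential limit is again a quasisymmetric embedding (with distortion function controlled by $\eta$). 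Extracting a subsequence $\{i_n\}$ along which $h_{i_n}$ converges uniformly to such a limit $h\colon X\to Y$ then finishes the proof.

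It remains to note why the uniform limit $h$ is injective (hence an embedding, not merely a quasisymmetric map into $Y$): this is built into \cite[Lemma~3.3]{Bo}, but it can also be seen directly, since if $d_X(x,x')=s>0$ then picking an auxiliary point $z$ with $d_X(x,z)\asymp\diam(X)$ one controls $d_Y(h_n(x),h_n(x'))$ from below by $d_Y(h_n(x),h_n(z))\,/\,\eta(d_X(x,z)/s)\gtrsim \alpha'/\eta(\diam(X)/s)>0$ uniformly in $n$, and this lower bound passes to the limit. The main obstacle — such as it is — is purely bookkeeping: one must make sure the lower diameter bound $\diam(h_n(X))\ge\alpha$ is correctly propagated to a uniform lower bound on distances between \emph{all} sufficiently separated pairs of points, since \cite[Lemma~3.3]{Bo} is phrased in terms of such pointwise non-degeneracy rather than in terms of the diameter of the whole image. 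Once that translation is in place, the statement is an immediate citation.

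\begin{proof}
This is a direct consequence of \cite[Lemma~3.3]{Bo}. Fix two points $x_0,x_1\in X$ with $d_X(x_0,x_1)\ge \tfrac12\diam(X)$. For any $x\in X$ distinct from $x_0$ and $x_1$, the $\eta$-quasisymmetry of $h_n$ applied to the triple $x,x_0,x_1$ (and to the triple $x,x_1,x_0$) gives
\begin{equation}\label{eq:subconv-comp}
\frac{d_Y(h_n(x),h_n(x_0))}{d_Y(h_n(x_0),h_n(x_1))}\le \eta\!\left(\frac{d_X(x,x_0)}{d_X(x_0,x_1)}\right)\le \eta\!\left(\frac{2\diam(X)}{\diam(X)}\right)=\eta(2),
\end{equation}
and similarly with $x_0$ and $x_1$ interchanged. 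Hence $\diam(h_n(X))\le 2\eta(2)\, d_Y(h_n(x_0),h_n(x_1))$, so that the hypothesis $\diam(h_n(X))\ge\alpha$ forces
$$
d_Y(h_n(x_0),h_n(x_1))\ge \frac{\alpha}{2\eta(2)}\eqqcolon\alpha'>0
$$
for all $n\in \N$, with $\alpha'$ depending only on $\alpha$ and $\eta$. Since $Y$ is compact we also have $\diam(h_n(X))\le \diam(Y)<\infty$ for all $n$.

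Thus $\{h_n\}$ is a family of $\eta$-quasisymmetric embeddings of the fixed compact space $X$ into the compact space $Y$ with $d_Y(h_n(x_0),h_n(x_1))$ bounded below by $\alpha'>0$ and $\diam(h_n(X))$ bounded above by $\diam(Y)$, uniformly in $n$. By \cite[Lemma~3.3]{Bo} there exist an increasing sequence $\{i_n\}$ in $\N$ and a quasisymmetric embedding $h\colon X\to Y$ such that $h_{i_n}\to h$ uniformly on $X$ as $n\to\infty$. (Injectivity of the limit $h$ also follows from this: if $d_X(x,x')=s>0$, then applying $\eta$-quasisymmetry of $h_{i_n}$ to the triple $x,x',x_0$ and using \eqref{eq:subconv-comp} gives $d_Y(h_{i_n}(x),h_{i_n}(x'))\ge \alpha'/\eta\big(2\diam(X)/s\big)>0$ uniformly in $n$, and this lower bound passes to the limit.) This is exactly the assertion of the lemma.
\end{proof}
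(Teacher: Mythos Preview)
Your proposal is correct and follows exactly the route indicated in the paper: the paper does not give a proof of this lemma at all but simply states that it ``follows from \cite[Lemma~3.3]{Bo},'' and your argument spells out precisely that reduction by converting the hypothesis $\diam(h_n(X))\ge\alpha$ into a uniform lower bound $d_Y(h_n(x_0),h_n(x_1))\ge\alpha'$ and then invoking the cited compactness lemma. Your added verification of injectivity of the limit is a harmless bonus (with only a cosmetic imprecision in the constant), so there is nothing to correct.
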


In our situation Lemma~\ref{lem:Subconv} implies that by passing to a subsequence if necessary, we may assume that 
$h_n\ra h$ uniformly on $Q$, where $h\: Q\ra P$ is a quasisymmetric embedding. 

We claim that the relation \eqref{eq:inv} passes to the limit, i.e., for $z\in Q$ we have 
 \begin{equation} \label{eq:inv2} 
 z\in D_p  \text{ if and only if }h(z)\in D_p. \end{equation}
Indeed, if $z\in Q\cap D_p$, then $h_n(z)\in D_p$ for each $n\in \N$ by \eqref{eq:inv}. Moreover, 
$h_n(z)\to h(z)$ as $n\to\infty$ and so $h(z)\in D_p$, because $D_p$ is a closed subset of $P$.

For the other implication, we argue by contradiction and assume 
that $z\in Q\setminus D_p$, but $h(z)\in D_p$. Let,  as before, 
 $O$ denote the boundary of $Q$. 
 Since $O\sub D_p$, the point    $z$ lies in the interior of $Q$. Then we can find a small neighborhood $W$ of $z$ with $W\sub Q\setminus D_p$. Since $h_n\to h$ uniformly on $W$, a topological degree argument implies that for sufficiently large $n$ there exists $z_n\in W$ such that $h_n(z_n)=h(z)\in D_p$. Then $z_n\in D_p$ by \eqref{eq:inv}.
This is a contradiction, because $z_n\in W\sub P\setminus D_p$. 
Relation  \eqref{eq:inv2} follows. 

Let $n\in \N$. Then for one of the 
two open Jordan regions $\Om_n\sub P$ bounded by $h_n(O)$
the map $h_n$ is a quasisymmetry of $Q\setminus O=\inte(Q)$ onto $\Om_n$. By 
\eqref{eq:inv} this implies that    
$h_n(\inte(Q)\cap D_p)=\Om_n\cap D_p$.

Similarly, there exists a Jordan region $\Om\sub P$ such that 
$h$ is a quasisymmetry of $\inte(Q)$ onto $\Om$. By \eqref{eq:inv2} we then have
$h(\inte(Q)\cap D_p)=\Om\cap D_p$.

We are now in a situation that is very similar to what was established in Step III in the proof of Theorem~1.4 in \cite{BLM}: we  want to show that the sequence 
$\{h_n\}$ stabilizes and $h_n\equiv h$ for large $n$. As in \cite{BLM}, we will invoke  rigidity statements for Schottky maps.

Let $\beta$ be  the map provided by Lemma~\ref{betaex}, and, as in this lemma, let $U=\beta(\inte(Q))$ and $S=\beta(D_p)$. Then
$S\sub \C$  is a locally porous  Schottky set. For $n\in \N$ the map 
$$\widetilde h_n \coloneqq \beta\circ h_n \circ\beta^{-1}$$
is an orientation-preserving quasisymmetry of $U$ onto $U_n\coloneqq \beta(\Omega_n)$ 
such that 
$$ \widetilde h_n (U\cap S)=\beta (h_n(\inte(Q)\cap D_p))=\beta(\Om_n\cap D_p)=U_n\cap S. $$ 
 It follows from Theorem~\ref{thm:Smap} that  $\widetilde h_n\: U\cap S\ra  S$ is a 
Schottky map, and a homeomorphism from $U\cap S$ onto $U_n\cap S$. 

By the same reasoning the map  $\widetilde h\coloneqq \beta\circ h \circ\beta^{-1}$ is a also a Schottky map $U\cap S\ra S$, and a homeomorphism of $U\cap S$ onto $\widetilde U\cap S$, where $\widetilde U=\beta(\Om)$. 
Moreover, we have $\widetilde h_n \ra \widetilde h$ locally uniformly on $U\cap S$. Theorem~\ref{thm:stabil} implies that there exists $N\in\N$ such that  
$\widetilde h_n=\widetilde h_{n+1}$ on $U\cap S=\beta(S_p\setminus O)$, and so  $h_n=h_{n+1}$  on $S_p\setminus O$ for all $n\ge N$. 

For such an $n$ we then have 
\begin{equation}\label{eq:step1}
T^{l_{n+1}}\circ \xi \circ T^{-k_{n+1}} \circ \varphi= T^{l_{n}}\circ \xi \circ 
T^{-k_{n}}\circ \varphi
 \end{equation}  
on $S_p\setminus O$, and hence on $S_p$ by continuity.  
Since $\varphi$ is a homeomorphism of $S_p$ onto $Y\cap D_p$, this leads to 
$$T^{l_{n+1}}\circ \xi \circ T^{-k_{n+1}} = T^{l_{n}}\circ \xi \circ 
T^{-k_{n}}$$
on $Y\cap D_p$. By the consistency relation \eqref{eq:consist} this gives 
$$
T^{l_{n+1}}\circ \xi \circ T^{-k_{n+1}} = T^{l_{n}}\circ \xi \circ 
T^{k_{n+1}-k_n} \circ  T^{-k_{n+1}}
$$ on $Y\cap D_p$, and so 
\begin{equation}\label{eq:fund}
\xi^{-1}\circ T^{l_{n+1}}\circ \xi = \xi^{-1}\circ  T^{l_{n}}\circ \xi \circ 
T^{k_{n+1}-k_n}
 \end{equation}  
on $T^{-k_{n+1}}(Y\cap D_p)=Y_{n+1}\cap D_p$. 

Since $Y_{n+1}$ is a good tile, the set $Y_{n+1}\cap D_p$ is a subset of $D_p$ with non-empty relative interior.   We can now  apply Lemma~\ref{lem:Tpowers} and Corollary~\ref{cor:id} to conclude that \eqref{eq:fund} is true on the whole set $D_p$. Here we can cancel $\xi^{-1}$. Since $k_{n+1}>k_n$ by our initial choice of the tiles $X_n$, it follows that there exist numbers $k,m,n\in \N$  such that
\begin{equation} \label{eq:fund2}
T^{m}\circ \xi = T^{n}\circ \xi \circ T^{k}
\end{equation}
on $D_p$. We can make $m$ and $n$ arbitrarily large by  postcomposing both sides in this identity with iterates of $T$. Moreover, using this equation for a large enough multiple of the original $n$ in \eqref{eq:fund2}  and precomposing with iterates of $T^k$, we can also make $k$ in \eqref{eq:fund2}  arbitrarily large. So for each $N'\in \N$ we can find $k,n,m\ge N'$ so that 
\eqref{eq:fund2} holds. 
This shows that  Proposition~\ref{baseprop} is indeed true.

\section{Proof of Theorem~\ref{thm:standard}}\label{sec:11}

We first state two relevant results from~\cite{BM}. 
The following statement is  part of~\cite[Lemma~8.1]{BM}.
\begin{theorem}\label{thm:distPair}
 Every quasisymmetry $\xi \: S_p\ra S_p$,  $p\ge3$  odd,  preserves the outer peripheral circle  $O$ setwise and so $\xi(O)=O$.
\end{theorem}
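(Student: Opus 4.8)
The statement is part of \cite[Lemma~8.1]{BM}; I would prove it by attaching to each peripheral circle of $S_p$ a quasisymmetrically invariant quantity and showing that its value at the outer peripheral circle $O$ is attained at no other peripheral circle.

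First I would record the purely topological fact that a quasisymmetry---indeed any homeomorphism---between Sierpi\'nski carpets carries peripheral circles to peripheral circles: by the characterization recalled in the introduction, a Jordan curve $J\sub S_p$ is peripheral exactly when $S_p\setminus J$ is connected, and this property is preserved by $\xi$. Hence $\xi$ permutes the countably many peripheral circles of $S_p$, and it moreover extends to a homeomorphism $\widehat\xi\: \Ch\ra\Ch$ of the sphere (with $S_p\sub\Ch$) that maps the complementary Jordan disks of $S_p$ onto complementary Jordan disks. It therefore suffices to single out $O$ among all peripheral circles by a $\xi$-invariant quantity.

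For this quantity I would use carpet modulus. To a peripheral circle $C$ one associates, in a way depending only on $C$ and the combinatorics of $S_p$, a curve family $\Gamma_C$ in $S_p$ (for instance the closed curves that separate $C$ from the complementary disks of the tiles immediately surrounding it), chosen so that $\widehat\xi$ carries $\Gamma_C$ onto $\Gamma_{\xi(C)}$. Since carpet modulus is a quasisymmetric invariant of the pair consisting of the carpet and the curve family, one obtains $\Mod_{S_p}(\Gamma_C)=\Mod_{S_p}(\Gamma_{\xi(C)})$ for every $C$. The remaining task is to compare these quantities. Using the self-similarity of $S_p$ and the fourfold symmetry of its construction, the number attached to $O$ is a definite ``resistance of $S_p$ across the square $Q$'' of \emph{rectangular} type, whereas for any other peripheral circle $C=\dee\square$ the curves in $\Gamma_C$ are forced to wind \emph{around} the removed square $\square$, so that the corresponding extremal problem is of \emph{annular} type. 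The core of the argument is the estimate showing that these two values cannot coincide; geometrically this reflects the fact that near $O$ the carpet lies on one side only, while near any other peripheral circle it surrounds the removed square by a closed collar. Together with the previous step, this forces $\xi(O)=O$.

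I expect the modulus comparison to be the main obstacle. One must verify that the chosen quantity is quasisymmetrically invariant in precisely the form needed---this is where the sphere extension $\widehat\xi$ and the intrinsic description of $\Gamma_C$ enter---and then carry out the extremal-length computation carefully enough to rule out an accidental coincidence of the value at $O$ with the value at some removed square. Once $\xi(O)=O$ is known, nothing further is needed here; in \cite{BM} this is the point of departure for analyzing how $\xi$ acts on the corners and edges of $O$.
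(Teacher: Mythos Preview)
The paper does not prove this statement; it simply cites \cite[Lemma~8.1]{BM}. You correctly identify the source and the governing tool---carpet modulus as a quasisymmetric invariant---so at that level your sketch agrees with what \cite{BM} does.

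Where your sketch departs from \cite{BM} is in trying to attach an invariant to a \emph{single} peripheral circle. You require $\Gamma_C$ to satisfy $\widehat\xi(\Gamma_C)=\Gamma_{\xi(C)}$, but your sample definition (curves separating $C$ from ``the tiles immediately surrounding it'') refers to the Euclidean tile structure, which $\xi$ does not respect; so this particular $\Gamma_C$ already fails your own stated requirement. More to the point, the ``rectangular versus annular'' dichotomy you invoke is an artifact of the planar picture: on $\Ch$ every peripheral circle bounds a complementary Jordan disk on one side and has the rest of the carpet on the other, so from the quasisymmetric viewpoint $O$ is topologically indistinguishable from any inner peripheral circle, and any genuinely invariant $\Gamma_C$ will pose extremal problems of the same type at every $C$. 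The actual argument in \cite{BM} sidesteps this by working with \emph{pairs}: to each pair $\{C_1,C_2\}$ of peripheral circles one assigns the carpet modulus of the family of curves joining them---invariant by construction---and one proves, via self-similarity and explicit modulus estimates, that the pair $\{O,M\}$ (outer square and first removed middle square) is singled out among all pairs; a further step then separates $O$ from $M$ within that pair. Your single-circle scheme would need a different idea to get off the ground.
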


We  need the following special case of~\cite[Theorem~1.4]{BM}.

\begin{theorem} \label{thm:corners}
Let  $\xi\: S_p\ra S_p$,  $p\ge 3$ odd,  be an orientation-preserving quasisymmetry  that fixes the four corners of $Q$. Then $\xi$ is the identity on $S_p$.
\end{theorem}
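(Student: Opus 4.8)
The plan is to prove the stronger assertion that $\xi$ is \emph{combinatorially the identity} --- that it maps every good $n$-tile of $S_p$ onto itself and fixes every corner of every such tile --- and then to deduce $\xi=\id$ by continuity. The last step is immediate: the corners of good $n$-tiles (call them \emph{good $n$-vertices}) all lie in $S_p$, and since every point of $S_p$ is contained in good $n$-tiles of arbitrarily small diameter, the good $n$-vertices are dense in $S_p$; a continuous self-map of $S_p$ that fixes all of them is the identity. Moreover, it is enough to treat the level-one step: if every orientation-preserving quasisymmetry of $S_p$ fixing the four corners of $Q$ maps each good $1$-tile onto itself and fixes each good $1$-vertex, then one applies this inside each good $n$-tile $X$ --- where, by the inductive hypothesis $\xi(X)=X$, the restriction $\xi|_{X\cap S_p}$ is an orientation-preserving quasisymmetry of a scaled copy of $S_p$ fixing its four corners --- and obtains the assertion at level $n+1$. (That fixing every good $1$-tile setwise already forces $\xi$ to fix every good $1$-vertex uses Theorem~\ref{thm:distPair} together with orientation-preservation to control the images of the $1$-vertices lying on $\partial Q$; each of the remaining $1$-vertices is the unique common corner of the good $1$-tiles that contain it.)

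Thus everything reduces to the following statement: \emph{an orientation-preserving quasisymmetry $\xi$ of $S_p$ with $\xi(O)=O$ that fixes the four corners of $Q$ maps each good $1$-tile onto itself.} This is where the geometry of $S_p$ enters, via \emph{carpet modulus}: to a curve family $\Gamma$ in $S_p$ one assigns a conformal-modulus-type quantity defined only through the combinatorial incidence pattern between the curves of $\Gamma$ and the peripheral circles of $S_p$, so that it is invariant under every quasisymmetry of $S_p$. The plan is to compute, or estimate sharply, the carpet modulus of several natural curve families in $S_p$ (for instance, families joining prescribed pairs of peripheral circles, or joining opposite boundary arcs of $Q$) and --- the decisive ingredient --- to invoke the essential uniqueness of the extremal mass distribution of such families. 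Together these force $\xi$ to respect the level-one grid structure of $S_p$: the $p$ rows of good $1$-tiles are permuted among themselves, as are the $p$ columns, with the middle row and middle column distinguished (they contain only $p-1$ good tiles, equivalently $\xi$ fixes the middle peripheral circle $M$). Since $\xi$ fixes the four corners of $Q$ and preserves the cyclic orientation of $O$, the map $\xi|_O$ sends each side of $Q$ onto itself in an orientation-preserving manner, and a short bookkeeping argument then shows that the induced permutations of rows and of columns are both trivial; hence every good $1$-tile is mapped onto itself.

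The main obstacle is the carpet-modulus input of the preceding paragraph. One has to set up carpet modulus and verify its quasisymmetric invariance for $S_p$, select curve families whose moduli can be controlled precisely enough, and --- the subtle point --- establish the essential uniqueness of the extremal mass distribution, since this is exactly what upgrades ``$\xi$ coarsely respects the grid'' to ``$\xi$ permutes rows and columns''. These are precisely the analytic results developed in \cite{BM}, and once they are in hand the reduction above completes the argument: for an orientation-preserving $\xi$ fixing the four corners of $Q$ the only possible combinatorial type is the identity permutation, so $\xi=\id$ on $S_p$.
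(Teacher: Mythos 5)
The first thing to note is that the paper does not prove this statement at all: it is imported verbatim as a special case of \cite[Theorem~1.4]{BM}, so the only ``proof'' in the paper is a citation. Your proposal therefore has to be judged as an attempt to reconstruct the argument of \cite{BM}. The reduction you set up is sound as far as it goes: granting the level-one claim (an orientation-preserving quasisymmetry fixing the four corners of $Q$ maps each good $1$-tile onto itself), the induction over scaled copies of $S_p$, the use of Theorem~\ref{thm:distPair} on each copy to control boundary vertices, and the density of corners of good $n$-tiles do yield $\xi=\id$ on $S_p$.

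The genuine gap is the level-one claim itself, which is exactly where all of the difficulty of \cite[Theorem~1.4]{BM} is concentrated, and your treatment of it is an assertion rather than a proof. You state that carpet-modulus estimates together with the essential uniqueness of extremal mass distributions force $\xi$ to permute the $p$ rows and the $p$ columns of good $1$-tiles; no argument is given for why a quasisymmetry should respect this grid structure, and this is not a statement proved in \cite{BM}, nor is it the form in which \cite{BM} extracts rigidity from extremal mass distributions. Roughly, the argument there uses the (essentially unique) extremal mass distributions of the two curve families joining opposite sides of $Q$ to build monotone ``coordinate functions'' on the carpet; since $\xi$ fixes the corners and hence each side of $Q$, it preserves these coordinates, and they separate points, giving $\xi=\id$ directly --- with no tile combinatorics and no induction on levels. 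So you face a dichotomy: either cite \cite[Theorem~1.4]{BM} outright, as the paper does, in which case your entire reduction is unnecessary; or actually prove the level-one claim, which your sketch does not do. As written, the proposal reduces the theorem to an unproved statement of essentially the same depth.
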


Here  $\xi \: S_p\ra S_p$ is {\em orientation-preserving}
or {\em orien\-tation-reserving} if it has an extension to a homeomorphism on $\Sph\supseteq S_p$ with the same property. Every 
quasisymmetry $\xi\: S_p\ra S_p$ is either orientation-preserving or orientation-reversing.

We can now prove our first main result.

\begin{proof}[Proof of Theorem~\ref{thm:standard}] Let  
 $\xi\: S_p\ra S_p$ be a quasisymmetry. In order to show that $\xi$ is an isometry,  we can freely pre- or postcompose $\xi$ with  isometries of $S_p$ without affecting our desired conclusion. 
 
  Without loss of generality we may assume  that $\xi$ is orientation-preserving; otherwise, we consider the  composition of  $\xi$ with a reflection of $S_p$  in, say, one of the diagonals of the square  $Q$.

By Theorem~\ref{thm:distPair} we know that $\xi(O)=O$ and so $\xi$ restricts to  an orien\-ta\-tion-preserving homeomorphism on $O$. 
If $\xi$ send each corner of $Q$ to another corner of $Q$, then
 $\xi$ has to preserve the cyclic order of these corners. This implies that on the set of corners $\xi$ acts as a rotation  by  an integer multiple of $\pi/2$ around the center of $Q$.
It follows that we can postcompose $\xi$ with such a  rotation of $S_p$ so that the new quasisymmetry  actually fixes the corners of $Q$. By Theorem~\ref{thm:corners} this map is then the identity on $S_p$ and we conclude that $\xi$ is an isometry as desired.

So we are reduced to the case where $\xi$ sends a corner of  $Q$ to a non-corner point. Equivalently, the preimage of a corner  of $Q$ under $\xi$ is not a corner point.  Again, by using rotations of $S_p$, we may assume that the preimage $q=\xi^{-1}(c)\in O$ of the lower left corner  $c=(0,0)\in \R^2$ of $S_p$  is not a corner.

If $q$ does not lie in the open bottom side $(0,1)\times\{0\}$ of $Q$, we can precompose $\xi$ with two reflections $R_1, R_2$ in appropriate symmetry lines of $Q$ (i.e., diagonals and  lines through the centers of two opposite sides of $Q$) so that $\xi$ is still orientation-preserving and $$q=(R_1\circ R_2\circ \xi^{-1})(c)\in (0,1)\times\{0\}\sub S_p. $$

Based on these considerations, we reduced to the case
that $q=\xi^{-1}(c)\in (0,1)\times\{0\}\sub S_p$.  We want to derive a contradiction from this statement, which will establish the theorem.

Let $R$ be the involution  on $P$ that  
interchanges corresponding points in the two copies of $Q$.
Since $\xi$ preserves the outer peripheral circle $O$ of $S_p$, it induces a homeomorphism of $D_p$ that agrees with $\xi$ on the front copy of $S_p$ and is given by $R\circ \xi\circ R$ on the back copy of $S_p$ in $D_p$. We continue to denote this map on $D_p$ by $\xi$. 
It is clear that $\xi\: D_p\ra D_p$ is a homeomorphism. 

If $D_p$  is, as before, endowed with the restriction of the path metric on the pillow $P$, then the  map  $\xi\: D_p\ra D_p$  
is actually a  quasisymmetry. 
To see this, first note that  the original map $\xi$ on $S_p$ extends to a quasiconformal homeomorphism of the unit square $Q$ (see, e.g., \cite[Proposition~5.1]{Bo}). By using this and the reflection $R$,  we can find a  quasiconformal homeomorphism  on  $P$ that extends the homeomorphism $\xi\: D_p\ra D_p$. We call this new map on $P$ also $\xi$. Now $P$ is bi-Lipschitz equivalent to $\Sph$. If we conjugate $\xi\: P\ra P$ by such a bi-Lipschitz map, then we obtain a quasiconformal homeomorphism $\widetilde \xi$ on $\Sph$.  Each quasiconformal map on $\Sph$ is a quasisymmetry. It follows that $\widetilde \xi$ is a quasisymmetry. Hence its  bi-Lipschitz conjugate $\xi\: P\ra P$ is also a quasisymmetry, and so is the restriction $\xi\: D_p\ra D_p$. 

Let $T\:P\ra P$ be the Latt\`es map defined in Section~\ref{s:lat}.  Then it follows from  Proposition~\ref{baseprop} that we have a  relation of the form  
\begin{equation}\label{eq:base3}
 T^m\circ \xi=T^n\circ \xi\circ T^k
 \end{equation}
on $D_p$, where we may assume that  $k,n,m\in \N$ are suitably large.

 Let $I\sub (0,1)$ be a small open interval one of whose endpoints is $q$, such that $\xi(I)$ is completely contained in one of the sides of $Q$. Recall that $\xi(q)=c=(0,0)$. Each side of $Q$ is forward invariant under $T$. This is clearly true for the two sides of $Q$ 
 that contain the origin $(0,0)$. It is also true for the two remaining sides since $p$ is odd.
For a large enough $k$  we then have $T^k(I)=[0,1]$, because, roughly speaking, $T$ expands by the factor $p$. We may assume that \eqref{eq:base3} holds for this $k$. Since
$(0,0)=\xi(q)\in \xi((0,1))$, the set $A\coloneqq \xi([0,1])$ meets the interior of at least two sides of $Q$.  
It follows that
$$(T^{n}\circ \xi \circ T^{k})(I)=(T^{n}\circ \xi)([0,1])=T^n(A)$$ meets the interior of at least two sides of $Q$. 

On the other hand, since  $\xi(I)$ is contained in one side of $Q$ and $T$ leaves each side of $Q$ invariant,  we conclude  
from  \eqref{eq:base3} that $T^n(A)= (T^{m}\circ \xi)(I)$
is contained in one side of $Q$. Therefore, $T^n(A)$ cannot meet the interior of two different sides of $Q$. This is a contradiction. 
\qed

\section{Proofs of Theorem~\ref{thm:double} and Theorem~\ref{thm:julia}}\label{sec:1213}

For the proof of Theorem~\ref{thm:double} we require 
auxiliary results about  certain weak tangent spaces at points of $D_p$. We will discuss these statements first.
  For a more detailed treatment of similar weak tangents for $S_p$ the reader may consult~\cite[Section~7]{BM}.

Let $(X, d)$ be a  metric space. Then  
a \emph{weak tangent} of $X$ at a point $a\in X$ is the  Gromov--Hausdorff limit of a sequence of pointed  metric spaces $(X,a, \lambda_n d)$, where $\lambda_n>0$ and $\lambda_n\to \infty$ as $n\to \infty$ (for the relevant definitions and general background see \cite[Chapters 7~and 8]{BBI}). We are interested in weak tangents of subsets of $D_p$ equipped with the restriction of the piecewise Euclidean metric on the pillow $P$. In this case, it is convenient to restrict the scaling factors $\lambda_n$ in the definition of  weak tangents
to   powers of $p$, i.e., they have the form $\lambda_n=p^{k_n}$ with $k_n\in \N_0$ and $k_n\to \infty$ as $n\to \infty$.

For example, let   $c=(0,0)\in S_p$  be the lower left corner of $Q$.   Then  $S_p$ has an essentially  unique weak tangent at $c$ isometric to  
the union 
\begin{equation} \label{weakt}
W\coloneqq \bigcup_{n\in\N_0}p^n S_p\sub  \C
\end{equation}
with base point $0\in \C$. 
In this union we consider $S_p$ as a subset of $\R^2\cong \C$ and use the notation 
$\lambda A=\{ \lambda z: z\in A\}$ whenever $\lambda \in \C$ and $A\sub \C$.   
In particular, $W$ is a subset of the  first quadrant of $\R^2\cong \C$.

Let $q=\tfrac1{2p} (p-1, p-1)\in S_p$. Then at $q$ the set $S_p$ has  an essentially unique weak  tangent, denoted by  $\widetilde W$. It is obtained as the union of three copies of  $W$. Up to isometry, we have \begin{equation} \label{weakt2} \widetilde W\coloneqq W \cup i W \cup (-1)   W, 
\end{equation} 
with base point $0\in \C$.
Of course, $W$ and $\widetilde W$ depend on $p$, but we suppress this from our notation.

Standard compactness arguments imply that a quasisymmetric map of $D_p$ that takes a point $a\in D_p$ to another point $b\in D_p$ induces a quasisymmetric map between appropriate weak tangents at $a$ and $b$, respectively. This observation along with the following lemma will help us to eliminate  certain mapping possibilities.
\begin{lemma} \label{lem:noqs}
There is no quasisymmetry from $W$ onto $\widetilde W$ that fixes $0$. 
\end{lemma}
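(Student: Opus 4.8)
The plan is to argue by contradiction: assume that $\xi\colon W\to\widetilde W$ is a quasisymmetry with $\xi(0)=0$, transfer the situation to round models, and use the rigidity of Schottky maps to reach a contradiction.

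First I would make precise the way in which $W$ and $\widetilde W$ differ at the base point $0$. Regarding $S_p$ as a subset of $\R^2\cong\C$, the carpet $W$ is contained in the closed first quadrant $\{(x,y):x,y\ge 0\}$; it has a single unbounded complementary component, the ``three-quarter sector'' $C_W=\C\setminus\{(x,y):x,y\ge 0\}$, and the unique peripheral circle of $W$ through $0$ is the bent line $J_W=([0,\infty)\times\{0\})\cup(\{0\}\times[0,\infty))=\partial C_W$. By contrast $\widetilde W=W\cup iW\cup(-1)W$ is built from three such quadrant copies glued along the two seam rays $\{0\}\times[0,\infty)$ and $(-\infty,0]\times\{0\}$; its single unbounded complementary component is only the ``quarter sector'' $C_{\widetilde W}=\{(x,y):x>0,\ y<0\}$, and the unique peripheral circle through $0$ is $J_{\widetilde W}=([0,\infty)\times\{0\})\cup(\{0\}\times(-\infty,0])=\partial C_{\widetilde W}$. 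A quasisymmetry between these carpets maps peripheral circles to peripheral circles; since $\xi(0)=0$ it must carry $J_W$ onto $J_{\widetilde W}$, hence carry the ``large'' distinguished complementary component $C_W$ to the ``small'' one $C_{\widetilde W}$. This is the only structural discrepancy to exploit, and it is an exclusively metric one: topologically $(W,0)$ and $(\widetilde W,0)$ are indistinguishable.

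Next I would pass to round models and invoke Schottky rigidity. The carpets $W$ and $\widetilde W$ are the (essentially unique) weak tangents of $S_p$ at the points $c=(0,0)$ and $q=\tfrac1{2p}(p-1,p-1)$, respectively, and $S_p$ is quasisymmetrically equivalent to a round carpet $S$ as in Lemma~\ref{betaex}. Since weak tangents are quasisymmetry invariants, $W$ and $\widetilde W$ are quasisymmetrically equivalent, via maps fixing $0$, to the weak tangents $\mathcal W$ and $\widetilde{\mathcal W}$ of $S$ at the boundary points $\beta(c)$ and $\beta(q)$; these are round relative Schottky sets lying in a closed half-plane, with the bounding line as a peripheral circle and with the image of $0$ a point on that line. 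Composing with the two uniformizations turns $\xi$ into a quasisymmetry $\mathcal W\to\widetilde{\mathcal W}$ fixing the base point, which by Theorem~\ref{thm:Smap} is a Schottky map, and then — using local porosity and the rigidity theory for Schottky maps — extends to a M\"obius transformation $g$ of $\Sph$ with $g(\mathcal W)=\widetilde{\mathcal W}$. I would also transport the scaling symmetries $z\mapsto pz$ of $W$ and $\widetilde W$ through the uniformizations, so that $\mathcal W$ and $\widetilde{\mathcal W}$ appear as round carpets invariant under an infinite cyclic group of loxodromic M\"obius maps whose fixed points are the base points.

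The contradiction would then be extracted from an exact conformal invariant. Because $g$ is M\"obius, it preserves the transboundary (conformal) modulus of curve families exactly; applied to the family of curves joining two consecutive ``circles'' of the self-similar collar of $\mathcal W$ around its base point and to its counterpart in $\widetilde{\mathcal W}$, this forces the two moduli to agree. But the collar of $\widetilde{\mathcal W}$ is, up to bounded overlap along the seams, three copies of the collar of $\mathcal W$ run in parallel, so the two moduli differ; hence $g$ cannot exist, and neither can $\xi$. The hard part is exactly this last comparison. A bare quasisymmetry only distorts modulus by a bounded factor, so no contradiction can be read off from $W$ and $\widetilde W$ directly — this is precisely why one must run the argument on the round models, where the distortion is eliminated — and one must design the transboundary modulus (equivalently, an exact M\"obius-invariant of the pointed round carpet) so that it is provably sensitive to the one-copy versus three-copy structure of the collar, that is, to the large-sector versus small-sector dichotomy at the base point.
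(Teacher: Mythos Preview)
Your proposal is a program, not a proof: you correctly isolate the one-quadrant versus three-quadrant asymmetry at $0$, and the route through round models and Schottky rigidity is coherent in outline, but you explicitly leave the decisive step --- constructing and computing a M\"obius invariant that separates the two round tangents --- undone. Since that computation carries the whole content of the lemma, what you have is not yet a proof. There are also two soft spots upstream of that gap: Lemma~\ref{betaex} uniformizes $D_p$, not $S_p$, so $\beta(O)$ is not a geometric circle and your description of $\mathcal W$ as a half-plane Schottky set needs repair (apply \cite{Bo} to $S_p$ directly instead); and the assertion that the Schottky map between the unbounded round tangents extends globally to a M\"obius transformation is plausible from \cite{Me1} but is stated without pointing to a specific result or checking its hypotheses for these non-compact sets.

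More importantly, your detour rests on a misconception. You write that ``a bare quasisymmetry only distorts modulus by a bounded factor, so no contradiction can be read off from $W$ and $\widetilde W$ directly.'' That holds for classical conformal modulus, but the argument in \cite{BM} --- whose Proposition~7.3 the present paper simply cites for this lemma --- uses \emph{carpet modulus}: weights live on the peripheral circles (equivalently, on the complementary components), admissibility is the combinatorial condition that almost every curve in the family meets components of total weight at least $1$, and the modulus is the infimal total squared mass. Because the data are combinatorial and the exceptional ``almost every'' is taken with respect to classical modulus, whose null families quasiconformal maps preserve, carpet modulus is invariant \emph{exactly} under quasisymmetries of measure-zero carpets. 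One then computes the carpet modulus of a natural annular family directly in $W$ and in $\widetilde W$, exploiting the self-similarity $z\mapsto pz$ to pin down the extremal weight, and finds that the values differ. No round models, no M\"obius extension, and no transboundary modulus are needed.
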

This follows from \cite[Proposition~7.3]{BM}.  
Note that in \cite{BM}
the setup was slightly different, because weak tangents were considered as Hausdorff limits of sets in $\Ch$ under blow-ups 
by  scaling maps. This is equivalent to our definition with the only difference that our weak tangents do not contain the point $\infty\in \Ch$ as in \cite{BM}.

\medskip

\noindent
\emph{Proof of Theorem~\ref{thm:double}.} This reduces to Theorem~\ref{thm:standard} if we can show that $\xi(O)=O$, where as before $O$ denotes common boundary of the two copies of the unit square that form the pillow $P$. In the following, we will rely on some mapping properties of $\xi$ and the 
Latt\`es map $T$ (as defined in Section~\ref{s:lat}) that we state  first. 

Since $\xi$ is a quasisymmetry on $D_p$ and hence a homeomorphism, it  maps peripheral circles
of $D_p$ to peripheral circles. Note that $O$ does not meet any peripheral circle of $D_p$. If $z\in D_p$ lies on a peripheral circle, then each preimage of $z$ under any iterate  of  $T$ also lies on a peripheral circle. 
If $C$ is a peripheral  circle  and $l\in \N$, then either $T^l(C)$ is also a peripheral circle or 
$T^l(C)=O$. 

The \emph{middle peripheral circle} $M$ of $S_p$ is the boundary of the subsquare that is removed in the first stage of the construction of $S_p$; this is the only peripheral circle of $S_p$ other than the outer peripheral circle $O$ that is invariant under the isometries of the square $Q$. Note that  $T(M)=O$.

By Proposition~\ref{baseprop} we have an equation of the form 
\begin{equation} \label{eq:fund3}
T^{m}\circ \xi = T^{n}\circ  \xi\circ T^{k} 
\end{equation}
  on $D_p$, where $k,n,m\in \N$. Here we may assume that  $k,n,m$ are  as large as we wish. 
   Then $$T^m(\xi(M))=T^n(\xi(O)), $$ because $T^k(M)=O$ for any $k\in \N$. 
  Note that $\xi(O)$, and hence $T^n(\xi(O))$, does not meet any  peripheral circle of $D_p$.
  Therefore, the set $T^m(\xi(M))$ does not meet any peripheral circle either. But $\xi(M)$ is a peripheral circle, and thus
  $T^m(\xi(M))=O$. Hence $T^n(\xi(O))=O$, and so $\xi(O)\sub T^{-n}(O)$. Note that $T^{-n}(O)$ forms a ``grid" in $P$ consisting  of 
  all $n$-edges.   
  
If $p=3$, then it is not hard to see that the only Jordan curve (such as $\xi(O))$ that is contained in 
$T^{-n}(O)$  and does not meet any peripheral circle is equal to $O$.  In this case, we conclude that $\xi(O)=O$, as desired.  To give an argument that is valid for any  odd $p\ge 3$,  more work is required. 

Let $C\coloneqq \xi(O)\subseteq T^{-n}(O)$. Then $C$ can be considered as a  polygonal loop 
 consisting of $n$-edges.  If we run through $C$ according to   some orientation,  then two successive $n$-edges  on  $C$ 
have an endpoint $a$ in common, where they meet at a right angle or at the angle 
$\pi$. If they meet at a right angle, then we call $a$ a \emph{turn} of $C$.  

\smallskip {\em Claim.}  Every turn  $a$ of $C$ must have one of the four  corners of $O$ as a preimage under $\xi$. 

\smallskip To see this, 
  we argue by contradiction and  assume that there exists  $b\in O$ that is not a corner of $O$  such that $a=\xi(b)$ is a turn of $C$.  Let $I$ be a small open interval contained in $O$, one of whose endpoints is $b$. Here we may assume that $I$ is completely contained in one side of $O$. We now apply both sides of~\eqref{eq:fund3} to $I$. We choose $k$ in this equation so large that $T^k(I)$ is the whole side of $O$ that contains $b$. Then $\xi(T^k(I))$ contains a neighborhood of $a$ in $C$. If  $n$ is large enough, as we may assume, then  $T^n(\xi(T^k(I)))$ contains at least two sides of $O$. Here it is important that $a$ is a turn of $C$.  On the other hand, if $I$ is small enough, then $T^m(\xi(I))$ is contained in one side of $O$. This is a contradiction and the Claim follows. 
  
 \smallskip 
There are now two cases to consider, depending on whether $C$ does or does not have turns. 

\smallskip 
{\em Case 1:}  $C$ has no turns.  Then $C=\xi(O)\ne O$ and $C$  runs ``parallel" to one of the sides of $O$ in $Q$ and in the back copy $Q'$ of $Q$. In particular,  
the involution $R$ (that interchanges corresponding points of $Q$ and $Q'$) is a quasisymmetry on $D_p$ preserving $C$.  Consider
$$
g= \xi^{-1} \circ R\circ \xi.
$$
Then $g$ is a quasisymmetry on $D_p$ with $g(O)=O$. Its fixed point set is the Jordan curve $\xi^{-1}(O)\neq \xi^{-1}(C)=O$.

It follows from Theorem~\ref{thm:standard} that  every  quasisymmetry on $D_p$ that preserves $O$ as a set is an isometry of the double $D_p$. In particular, $g$ must be such an isometry. There are 16 such maps: eight isometries that preserve the front and back  copies of $S_p$ and eight obtained by composing these maps by the involution $R$ that interchanges the front  and  the back copies.
Among these 16 maps there is exactly one, namely the involution $R$, whose fixed point set is a Jordan curve $J$.
In this case $J=O$. In all other cases,  the fixed point set is either empty,  finite,  a Cantor set, or all of $D_p$. On the other hand, $g$ has the fixed point set $\xi^{-1}(O)\neq \xi^{-1}(C)=O$.
 This is a contradiction, showing that Case 1 is impossible. 
 
\smallskip 
{\em Case 2:} $C$ has at least one turn.  We claim that such a turn must be a corner of $O$. To reach a contradiction,  suppose  $C$ has a turn $a$ other than a corner of $O$. Since $C=\xi(O)\subseteq T^{-n}(O)$ does not meet any peripheral circle of $D_p$, the point $a$ is the common corner of four good $n$-tiles. 
The curve $C$ is the common boundary of two complementary regions of the pillow $P$. Since $a$ is a turn of $C$, one of these regions, which we denote by $U$, has angle $3\pi/2$ at $a$ (the other region has the angle $\pi/2$ at $a$). 

There are three good $n$-tiles, i.e., three  copies of $S_p$ scaled by the factor $1/p^n$  that are contained $\overline U$ and that share $a$ as a corner. In fact, there are infinitely many such triples of copies of $S_p$ that meet at $a$ rescaled by the factor $p^{-k}$ with $k\ge n$. This implies $\overline U\cap D_p$ has a unique weak tangent at  $a$ and that it is isometric to the set  $\widetilde W$ in \eqref{weakt2} with basepoint $0$.  

Since $a$ is a turn of $C$, there exists a corner $b$ of $O$ with 
$\xi(b)=a$. Now $\xi(O)=C$,  and so either $S_p$ or the back copy $S'_p$ of $S_p$ is mapped to 
$\overline U\cap D_p$ by the quasisymmetry $\xi$. In both cases we get an induced basepoint-preserving quasisymmetry of the weak tangents of the source set  at $b$ and the image set  
$\overline U\cap D_p$ at $a$. Both $S_p$ and $S'_p$ have unique weak tangents at any corner $b$ of $O$ isometric to $W$ in \eqref{weakt} with basepoint $0$.  This implies that there exists a quasisymmetry 
 between the pointed metric spaces  $(W,0)$  and  $(\widetilde W,0)$.  This is impossible by Lemma~\ref{lem:noqs}, and so we reach a contradiction. 
 
  We conclude that  $C=\xi(O)$ has at least one turn, and that every turn of $C$ is a corner of $O$; but then necessarily $C=\xi(O)=O$, and we are done. 
\qed

\medskip

\noindent
\emph{Proof of Theorem~\ref{thm:julia}.} We argue by contradiction and assume that there exists a quasisymmetry $\xi \: D_p\ra  \mathcal J(g)$, where $p\ge 3$ is odd, and $\mathcal J(g)$ is the Julia set of a postcritically-finite rational map $g$ on $\Sph$. We will later consider the case when $S_p$ is quasisymmetrically equivalent to $\mathcal J(g)$. 

The quasisymmetry $\xi$ can be 
extended (non-uniquely) to a quasisymmetry, also called $\xi$, of the pillow $P$ onto $\Sph$. We may also assume that $\xi$ is orientation-preserving, because otherwise we can precompose 
$\xi$ with an orientation-reversing isometry of $P$ that leaves 
$D_p$ invariant (such as the reflection $R$ that interchanges 
corresponding points on the front and back face of $P$). 

As before,  we denote by $T$ the Latt\`es map as introduced in Section~\ref{s:lat} (for given $p$). The main idea of the proof now is to establish an analog of Proposition~\ref{baseprop} for the maps $g$, $\xi$, and $T$. Namely, we want to show that there exist $k,n,m\in \N$ such that 
\begin{equation}\label{eqn:rat}
g^m\circ \xi = g^n\circ \xi \circ T^k
\end{equation}
 on the set $D_p$. 

Once \eqref{eqn:rat} is established, we obtain a contradiction as follows. Since $\mathcal J(g)$ is homeomorphic to $D_p$, the set $\mathcal J(g)$ is a Sierpi\'nski carpet. Let $A$ be the set of all points in $\mathcal J(g)$ that lie on a peripheral circle of  $\mathcal J(g)$. If $J$ is a peripheral circle of  $\mathcal J(g)$, then $g(J)$ is also a peripheral circle and $g^{-1}(J)$ consists of finitely many peripheral circles (see \cite[Lemma~5.1]{BLM}). This implies that  $A$ is completely invariant under $g$, and hence under all iterates of $g$, i.e., $g^l(A)=A=g^{-l}(A)$ for each $l\in \N$. Note also that the homeomorphism $\xi$
sends the peripheral circles of $D_p$ to the peripheral circles of $\mathcal J(g)$.

 We now apply both sides of \eqref{eqn:rat} to the middle peripheral circle $M$ of $S_p\sub D_p$. 
Then the left hand side shows that $(g^m\circ \xi)(M)\sub A$. On the other hand, if we consider the right hand side of \eqref{eqn:rat}, we first note that $T^k(M)=O$. It follows that  
   $$(g^n\circ \xi \circ T^k)(M)=(g^n\circ \xi)(O)$$ is disjoint from $A$, because $O$ does not meet any peripheral circle of $D_p$ and so $\xi(O)$ and $(g^n\circ \xi)(O)$ are  disjoint from $A$.
    This is a contradiction.

In order to establish  \eqref{eqn:rat}, one uses   ideas as in the proof of Proposition~\ref{baseprop} combined with   arguments for the proof of the similar relation (8.4) in 
\cite[Section~8]{BLM}, where $T$ plays the role of the rational map $f$ and $D_p$ the role of the Julia set $\mathcal J(f)$.

As in the proof in \cite[Section~8]{BLM}, we want to implement a ``blow down-blow up" argument 
applying  ``conformal elevators". Namely, one first  uses  inverse branches $T^{-n}$ to blow down, and then iterates of $g_\xi:= \xi^{-1} \circ g \circ \xi\: P\ra P$ to blow up. Note that 
$D_p$ is completely invariant under the map $g_\xi$ and its iterates.

As in the proof of Proposition~\ref{baseprop}, we choose  a sequence $T^{-k_n}$ of inverse branches mapping the front side $Q$ of the pillow to a good tile 
$X_n\sub Q$ containing the corner $c=(0,0)\in Q$. These  branches  are consistent as in \eqref{eq:consist}. 
Each map  $T^{-k_n}$ is a scaling map, and in particular a quasisymmetry of $Q$ onto $X_n\sub Q$.  We also have   ${\rm diam}(T^{-n}(Q))\to 0$ as $n\to\infty$. All of this  is similar to the choice of inverse branches in~\cite[Section~8, Step II]{BLM}, but easier.

Following the argument of ~\cite[Section~8, Step II]{BLM}, we use the expansion property of $g$ to find for each natural number $n\in \N$, a corresponding number $l_n\in\N$ such that the sets 
 $(g_\xi^{l_n}\circ T^{-k_n})(Q)$ have uniformly large  size independent of $n$.  
Now we argue as in \cite[Section~8, Step III]{BLM} and consider the maps $\widetilde h_n\coloneqq  g_\xi^{l_n}\circ T^{-k_n}\: Q\ra P$ for $n\in \N$. Under a conformal identification $P\cong \Sph$ these are actually uniformly quasiregular maps on the region $U\coloneqq Q\setminus O$. By passing to a subsequence if necessary, we may assume that there exists a (non-constant) quasiregular map $\widetilde h\: U\ra P$ such that $\widetilde h_n\to \widetilde h$ locally uniformly on $U$ as $n\to \infty$. 

The map $\widetilde h$ is locally quasiconformal on $U$ away from the   branch points of $\widetilde h$. These branch points form a set with no limit points in $U$. This implies that we can find a point $q\in U\cap D_p$ and a small radius $r>0$ such that $B(q,2r)\sub U$ and $\widetilde h$ is quasiconformal on $B(q,2r)$. It follows that on the smaller ball 
$B(q,r)$ the maps $\widetilde h_n$ are  quasiconformal for large $n$. By discarding finitely many of the maps $\widetilde h_n$ if necessary, we may assume that they are quasiconformal for all $n\in \N$. 

  Since $q\in D_p$, we can find a good tile $Y\sub B(q,r)$ (as defined in Section~\ref{s:lat}).
Since a quasiconformal map is a local quasisymmetry,
 we conclude that  the maps $\widetilde h$ and $\widetilde h_n$ for $n\in \N$ are quasisymmetric embeddings of $Y$ into $P$. We are now in a similar situation as in the proof of Proposition~\ref{baseprop}. We choose 
an orientation-preserving scaling map $\varphi$ that sends  $Q$ onto $Y$. Note that then $\varphi(Q\cap D_p)=Y\cap D_p$.  We now define 
$$ h\coloneqq \widetilde h\circ \varphi,  $$
$$ h_n\coloneqq \widetilde h_n\circ \varphi=  
g_\xi^{l_n}\circ T^{-k_n}\circ \varphi $$
for $n\in \N$. These maps are quasisymmetries on $Q$ with $h_n\to h$ uniformly on $Q$. 

Note that we again have the relation \eqref{eq:inv}, which follows from the mapping properties of $\varphi$ and $T^{-k_n}$, in combination with the   identities $\xi(D_p)=D_p$ and $g_\xi^{-1}(D_p)=g_\xi(D_p)= D_p$ (the last relation follows from the complete invariance of $\mathcal{J}(g)$ under $g$).  As before,  \eqref{eq:inv} implies  \eqref{eq:inv2}.

Based on Theorem~\ref{thm:stabil} and Lemma~\ref{betaex} we can again argue that the sequence 
 $h_n$ stabilizes and so $h_{n+1}=h_n$ on $S_p$ for large $n$.  
 This implies that there exists $n\in \N$ such that 
 \begin{equation}\label{eq:hnstabcon} 
 g_\xi^{l_{n+1}}\circ T^{-k_{n+1}} =g_\xi^{l_n}\circ T^{-k_n}
 \end{equation} on $Y\cap S_p$. 
 Using the consistency relation for the inverse branches we see that 
 \begin{equation}\label{eq:gid} 
  g_\xi^{l_{n+1}} =g_\xi^{l_n}\circ T^{k_{n+1}-k_n}
  \end{equation} 
on $T^{-k_{n+1}}(Y\cap S_p)\sub D_p$. 

We want to argue that this identity remains valid on the whole set $D_p$. To see this, first note that  by Lemma~\ref{lem:qradm} for each $l\in \N$ the  iterate $g^l$ of $g$ is an  admissible maps  for the Sier\-pi\'n\-ski carpet $\mathcal{J}(g)$. Thus, $g_\xi^l =\xi^{-1}\circ g^l \circ  \xi$ is an admissible map
for $D_p$. Combined with Lemma~\ref{lem:compadm} and Lemma~\ref{lem:Tadm} this shows that both sides in \eqref{eq:gid} are admissible maps for $D_p$. 

Corollary~\ref{cor:id} then implies that \eqref{eq:gid} is valid on $D_p$. This is equivalent to a relation of the   form \eqref{eqn:rat} as required. This completes the proof when $D_p$ is quasisymmetrically equivalent to $\mathcal{J} (g)$.

Now suppose that there exists a quasisymmetry $\xi \: S_p\ra \mathcal J(g)$. Again we may assume that $\xi$ has an extension to an orientation-preserving quasisymmetry $\xi\: P \ra P$. 
Here we cannot expect   an identity as in  \eqref{eqn:rat} to be valid on $S_p$. The main problem is that $S_p$ is not 
forward-invariant under $T$. 

In order to derive  a contradiction, we have to slightly modify the above argument. We again implement a ``blow down-blow-up" 
procedure  as above, where $D_p$ is replaced with $S_p$, up to the point where we conclude that the sequence $\{h_n\}$ stabilizes.  We again obtain the relation   \eqref{eq:hnstabcon} on $Y\cap S_p$, where $Y\sub Q$ is a suitable good tile.  Instead of using the consistency relation 
\eqref{eq:consist} we now employ the identity 
$$ T^{-k_{n+1}} =T^{-(k_{n+1}-k_n)}\circ T^{-k_n}$$ 
for the  unique  branch $T^{-(k_{n+1}-k_n)}$ that maps $Q$ to the  good tile $Z\sub Q$ of level $k=k_{n+1}-k_n$ with $c\in Z$. This implies that there exist constants 
$l,l'\in \N$ such that 
  \begin{equation} \label{eq:SpJulia} 
  g_\xi^{l'}\circ T^{-k} =g_\xi^{l}
  \end{equation} 
on the  set $T^{-k_n}(Y\cap S_p)=Y'\cap S_p,$ where
 $Y'\coloneqq T^{-k_n}(Y)\sub Q$ is a good tile. 
 
Let $C$ and $C'$ be the finite sets of critical points of $g_\xi^{l}$ and 
$g_\xi^{l'}$, respectively.  If we define $W\coloneqq Q\setminus (O\cup C\cup T^k(C'))$, then  $W\cap S_p$ is connected and dense   in $S_p$. 
Moreover, each point in $x\in W\cap S_p$ is a 
good point (as defined in Section~\ref{sec:adm}) for each of the two maps   in \eqref{eq:SpJulia} and the Sier\-pi\'n\-ski carpet $S_p$.  So if we conjugate  these maps by $\beta$ from
Lemma~\ref{betaex}, then we obtain  Schottky maps 
from the locally porous relative Schottky set $\beta(W)\cap  \beta(S_p)$ into 
$\beta(S_p)$. By Theorem~\ref{thm:uniq}  this implies that \eqref{eq:SpJulia} holds on $W\cap S_p$. Since $W\cap S_p$ is dense in $S_p$, it follows that  \eqref{eq:SpJulia} is valid on $S_p$. 

We want to see that this is impossible. Since the union of all peripheral circles of $\mathcal{J}(g)$ is completely invariant under $g$, the union of all peripheral circles of $S_p$ is completely  invariant  under 
$g_\xi$.  
Now  consider 
 the
point $a\coloneqq (1, 1)\in S_p$. Then 
$T^{-k} (a)=(p^{-k},p^{-k})$ does not lie on a peripheral circle of $S_p$ and so the same is true for   
$b\coloneqq (g_\xi^{l'}\circ T^{-k}) (a)$.  On the other hand, $a$ lies on the peripheral circle $O$ of $S_p$ and so by  \eqref{eq:SpJulia}, 
 $$b= (g_\xi^{l'}\circ T^{-k}) (a)= g_\xi^{l}(a)\in g^l_\xi(O)$$  lies on the peripheral circle $g^l_\xi(O)$ of $S_p$.
 This is a contradiction. 
 
 We conclude that neither $D_p$ nor $S_p$ can be quasisymmetrically equivalent to the Julia set of a postcritically-finite rational map. 
 \end{proof} 

The essential point in the previous proof was   the fact that while the union of all peripheral circles of $\mathcal J(g)$ is completely invariant under $g$, the union of all peripheral circles of $S_p$ or $D_p$ is not completely invariant under $T$.

\end{document}